\documentclass{article}
\usepackage{graphicx} 
\usepackage{amssymb,amsthm,amstext,amsmath}
\usepackage{fullpage}
\usepackage{lmodern}
\usepackage{titlesec}
\usepackage[colorlinks=false,hidelinks]{hyperref}
\usepackage{biblatex}
\bibliography{bibliography.bib}

\titleformat{\section}
  {\normalfont\large}      
  {\thesection.}           
  {1em}                    
  {\centering}             

\titlespacing*{\subsection}
  {0pt}       
  {1.2ex}     
  {0.8ex}     

\newtheorem{TT}{Theorem}[section]
\newtheorem{LL}[TT]{Lemma}

\newtheorem{PP}[TT]{Proposition}
\newtheorem{DD}[TT]{Definition}
\newtheorem{EE}[TT]{Example}
\newtheorem{RR}[TT]{Remark}

\title{Interplay between Quantitative Aspects of Locally Conformally
Symplectic Geometry and Contact Dynamics}
\author{Pacôme Van Overschelde}
\date{}

\nocite{Belgun_2019}
\nocite{CandelConlon2000}

\begin{document}

\maketitle

\begin{abstract}
We investigate quantitative properties of exact locally conformally symplectic (LCS) manifolds, namely the homotheties of the Lee form that still produce an exact LCS form.
This gives the notion of elasticity of an exact LCS pair. 
Using this, we characterize LCS manifolds of the first kind.
We then generalize a result of Bazzoni and Marrero on the latter, by showing that an exact LCS manifold of rank $1$ admitting an exact LCS pair, whose complementary of elasticity is bounded, is isomorphic to an LCS mapping torus.
Conversely, we show that any LCS mapping tori over a closed contact manifold satisfies this condition, thereby providing a characterization of LCS mapping tori over closed contact manifolds.
In doing so, we establish a link between the limit values of the elasticity of an LCS mapping torus over a closed contact manifold and the Birkhoff average of the conformal factor of its contactomorphism.
As a consequence, we obtain a lower bound on the maxima and an upper bound on the minima of the various conformal factors associated with a contactomorphism on a closed contact manifold.
\end{abstract}

\tableofcontents

\section{Introduction}
A nondegenerate $2$-form $\omega$ on a $2n$-dimensional manifold $M$ is said to be \textit{locally conformally symplectic} (LCS) if there exists a closed $1$-form $\eta$ on $M$ such that $d\omega = \eta \wedge \omega$.
The form $\eta$ is then called the Lee form associated with $\omega$ and is uniquely determined.
Two LCS forms $\omega$ and $\omega'$ on $M$ are said to be conformally equivalent if there exists a function $f \in C^\infty(M)$ such that $\omega' = e^f \omega$ and an LCS structure $[\omega]$ on $M$ is an equivalence class for this relation.
\smallskip

Locally conformal symplectic structures were first studied by Lee \cite{Lee1943}, Libermann \cite{Libermann1955}, and Lichnerowicz \cite{Lichnerowicz1983} before being formalized by Vaisman \cite{Vaisman1985}. They were later studied by Banyaga \cite{7a23308de98840c3b920b67d847ed2f6}, among many others.
\smallskip

The historically most studied class of LCS structures are LCS structures of the first kind.
For these, there exists a vector field $X \in \mathfrak{X}(M)$ such that $\mathcal{L}_X \omega = 0$ and $\eta(X)$ is nonvanishing.
This is equivalent to the existence of a $1$-form $\lambda$ on $M$ such that $\omega = d\lambda - \eta \wedge \lambda$ and $d\lambda$ is of rank $2n-2$.
When the first condition is satisfied, the LCS structure is said to be exact.
LCS structures of the first kind are a particular case of exact LCS structures.
\smallskip

In this paper, we will mainly focus on exact LCS structures.
We investigate homotheties of the Lee form that still produce an exact LCS form, i.e. for which constants $c \in \mathbb{R}$ the $2$-form $d\lambda - c\, \eta \wedge \lambda$ remains nondegenerate.
We define the elasticity $E(\lambda,\eta)$ as the set of such constants, and we show that 
\bigskip
 
\noindent \textbf{Proposition 2.23.} \textit{The pair $(\lambda,\eta)$ defines an LCS structure of the first kind if and only if $E(\lambda,\eta) = \mathbb{R}_0$}.
\bigskip

Let $(N,\xi)$ be a contact manifold.
A contactomorphism $\psi \in \mathrm{Cont}(N,\xi)$ is said to be \textit{strict} if there exists a contact form $\alpha$ associated with $\xi$ such that $\psi^* \alpha = \alpha$.  
Banyaga constructed in \cite{Banyaga2002}, LCS structure of the first kind on mapping tori of a strict contactomorphism and showed conversely \cite[Theorem~2]{Banyaga2002} that a compact manifold endowed with an LCS structure of the first kind fibres over the circle, and that the fibre inherits a contact structure.
\smallskip

Subsequently, Bazzoni and Marrero showed \cite[Theorem~4.7]{Bazzoni_2018}, that under a compactness asumption on the canonical foliation induced by the Lee form, that a compact LCS manifold of the first kind is LCS-isomorphic to a mapping torus of a strict contactomorphism.
\smallskip

Exact LCS structures on mapping tori of non necessarily strict contactomorphisms were independently constructed by Chantraine and Sackel in \cite{chantraine2024productslocallyconformalsymplectic}, and by the authors in \cite{VanOverschelde2025}.
This construction involves a constant term corresponding to the size of the Lee form.
We define the set of admissible values $\mathcal{A}_{(\alpha,\psi)}$ of the LCS mapping torus associated to $\psi$ as the set of values that this constant can take.
\smallskip

We generalise the result of Bazzoni and Marrero by showing that, under certain conditions on the canonical foliation induced by the Lee form, if one weakens the first kind condition by requiring that $\eta \wedge \lambda \wedge d\lambda^{\,n-1}$ is nonvanishing, then the exact LCS manifold is isomorphic to an LCS mapping torus for a non necessarily strict contactomorphism.
\bigskip
 
\noindent \textbf{Theorem 3.3.} \textit{Let $(M,\lambda,\eta)$ be an exact LCS manifold of rank $1$ such that 
$\eta \wedge \lambda \wedge d\lambda^{\,n-1}$ is non vanishing and $Z_\lambda$ the $\eta$-Liouville vector field is complete.  
Then $(M,\lambda,\eta)$ is isomorphic to an LCS mapping torus.}
\bigskip

In the compact case, the previous condition is equivalent to elasticity $E(\lambda,\eta)$ having a bounded complementary set.
We then show that over a closed contact manifold any LCS mapping torus satisfies this condition, thereby providing a characterization of LCS mapping tori over closed contact manifolds.
\bigskip
 
\noindent \textbf{Theorem 3.10.} \textit{A closed exact LCS manifold $(M,\lambda,\eta)$ of rank~$1$ is isomorphic to the LCS mapping torus of a closed contact manifold if and only if  $E(\lambda,\eta)^{c}$ is bounded.}
\bigskip

In doing so, we establish a link between the admissible values $\mathcal{A}_{(\alpha,\psi)}$ and the sequence of partial Birkhoff averages $(A_n(h))_{n \in \mathbb{N}_0}$ of the conformal factor $h$ associated with $\psi$, where 
$$A_n(h):= \frac{1}{n}\sum_{i=0}^{n-1} h \circ \psi^i, \ \ \text{with $\psi^* \alpha = e^h \alpha$.} $$
We then determine the admissible values $\mathcal{A}_{(\alpha,\psi)}$ over a closed contact manifold as some limit values of the Birkhoff average of the conformal factor associated with $\psi$.
\bigskip
 
\noindent \textbf{Theorem 3.14.} \textit{Let $(N,\xi)$ be a closed contact manifold, $\alpha$ a contact form associated with $\xi$, and $\psi$ a contactomorphism satisfying $\psi^* \alpha = e^h \alpha$, with $h \in C^\infty(N)$.
Then \[
    \mathcal{A}_{(\alpha,\psi)}
    =
    \ ]-\infty,\ \lim_{n\to +\infty} \inf_{x \in N} \inf_{i \geq n} A_i(h)(x)[
    \ \cup\ 
    ]\lim_{n\to +\infty} \sup_{x \in N}\sup_{i \geq n} A_i(h)(x),\ +\infty[\, .
\]}
\bigskip

As a consequence, we obtain a lower bound on the maxima and an upper bound on the minima of the various conformal factors associated with a contactomorphism on a closed contact manifold.
\bigskip

\noindent \textbf{Theorem 3.15.} \textit{Let $N$ be a closed manifold, $\alpha$ a contact form on $N$, and $\psi$ a contactomorphism such that $\psi^* \alpha = e^h \alpha$, with $h \in C^\infty(N)$.
We have 
\[ \inf_{f \in C^\infty(N)} \max_{x \in N}(h(x)+f\circ \psi(x)-f(x)) = \lim_{n \to \infty} \max_{x \in N} A_n(h)(x)\]
and 
\[ \sup_{f \in C^\infty(N)} \min_{x \in N}(h(x)+f\circ \psi(x)-f(x)) = \lim_{n \to \infty} \min_{x \in N} A_n(h)(x).\]}

\section{Preliminary}
\subsection{Exact LCS manifolds}

Let \(M\) be a smooth manifold and \(\eta\) a closed \(1\)-form on \(M\).
For any \(\beta \in \Omega^*(M)\), we define the \textit{Lichnerowicz differential} of \(\beta\) with respect to \(\eta\) as \(d_\eta \beta = d\beta - \eta \wedge \beta\).
\smallskip

\begin{DD}
A $1$-form $\lambda \in \Omega^1(M)$ is said to be \textit{$\eta$-Liouville} if $d_\eta \lambda$ is non-degenerate and an \textit{exact LCS\ pair} $(\lambda,\eta)$ on $M$ consists of a closed $1$-form $\eta$ and an $\eta$-Liouville form $\lambda$.
\smallskip

Two exact LCS pairs $(\lambda,\eta)$ and $(\lambda',\eta')$ on $M$ are said to be \textit{exactly conformally equivalent} if there exists functions $g,h \in C^\infty(M)$ such that $\lambda' = e^g(\lambda + d_\eta h)$ and $\eta' = \eta + dg$.
An \textit{exact LCS  structure} $[(\lambda,\eta)]$ on $M$ is an equivalence class of exact LCS pairs under this relation, and an \textit{exact LCS manifold} $(M,[(\lambda,\eta)])$ is a smooth manifold together with an exact LCS structure.
\end{DD}

The integration of $\eta$ defines a group morphism
\(
\int \eta : \pi_1(M) \to (\mathbb{R}, +): [\gamma] \mapsto \int_\gamma \eta,
\)
which depends only on $[\eta] \in H^1(M,\mathbb{R})$.
Since $(\mathbb{R}, +)$ is abelian, the map $\int \eta$ factors uniquely through $H_1(M,\mathbb{Z})$, yielding the \textit{period morphism}
\(
\mathrm{Per}_{[\eta]} : H_1(M,\mathbb{Z}) \to \mathbb{R}.
\)

\begin{DD}[Belgun - Goertsches - Petrecca]
Let $(M,[(\lambda,\eta)])$ be an exact LCS manifold. The \textit{LCS rank} of $(M,[(\lambda,\eta)])$ is defined as the rank of the subgroup of $(\mathbb{R}, +)$ generated by the image of $\mathrm{Per}_{[\eta]}$.
\end{DD}

\begin{LL}
\label{morphisme to lee}
Let $\overline{M}$ be a smooth manifold and $\Gamma$ a group acting properly discontinuously on $\overline{M}$.
There is a correspondence between $\mathrm{Hom}(\Gamma,\mathbb{R})$ and $H^1_{dR}(\overline{M}/\Gamma)$.
\end{LL}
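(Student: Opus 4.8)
The plan is to exhibit mutually inverse maps between $\mathrm{Hom}(\Gamma,\mathbb{R})$ and (a natural subspace of) $H^1_{dR}(M)$, where $M := \overline{M}/\Gamma$ and $\pi : \overline{M}\to M$ is the quotient covering. For the direction $H^1_{dR}(M)\to\mathrm{Hom}(\Gamma,\mathbb{R})$ I would pick, given a class, a closed representative $\eta\in\Omega^1(M)$. Then $\pi^*\eta$ is a closed $1$-form on $\overline{M}$; assuming $H^1_{dR}(\overline{M})=0$ (which holds in the cases of interest, e.g. when $\overline{M}$ is simply connected) one writes $\pi^*\eta = df$ for some $f\in C^\infty(\overline{M})$. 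Since $\pi\circ\gamma=\pi$ for every $\gamma\in\Gamma$, we get $d(f\circ\gamma)=\gamma^*(\pi^*\eta)=\pi^*\eta=df$, so $\rho_\eta(\gamma):=f\circ\gamma-f$ is locally constant, hence a real number on each component. A direct computation from $f\circ(\gamma_1\gamma_2)=(f\circ\gamma_1)\circ\gamma_2$ shows $\rho_\eta\in\mathrm{Hom}(\Gamma,\mathbb{R})$, and replacing $f$ by $f+\mathrm{const}$, or $\eta$ by $\eta+dg$ (so $f$ by $f+g\circ\pi$, with $g\circ\pi$ being $\Gamma$-invariant), does not change $\rho_\eta$; hence $[\eta]\mapsto\rho_\eta$ is well defined.

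For the reverse direction, given $\rho\in\mathrm{Hom}(\Gamma,\mathbb{R})$ I would form the $\mathbb{R}$-principal bundle $E_\rho:=(\overline{M}\times\mathbb{R})/\Gamma\to M$, where $\Gamma$ acts by $\gamma\cdot(x,t)=(\gamma\cdot x,\, t+\rho(\gamma))$. This is a genuine fiber bundle over $M$ because $\pi$ is a covering, and its structure group $(\mathbb{R},+)$ is contractible, so $E_\rho$ admits a global smooth section (equivalently: $B\mathbb{R}$ is contractible, or one glues local sections with a partition of unity since the fibers form an affine space over $\mathbb{R}$). Such a section is the same datum as a smooth map $f:\overline{M}\to\mathbb{R}$ with $f\circ\gamma=f+\rho(\gamma)$ for all $\gamma$. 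Then $df$ is $\Gamma$-invariant, hence descends to a closed $1$-form $\eta_\rho$ on $M$ whose class is independent of the chosen section (two sections differ by a $\Gamma$-invariant function, i.e. a function pulled back from $M$). Setting $\Psi(\rho):=[\eta_\rho]$ gives the reverse map; it is immediate from the constructions that $\rho_{\eta_\rho}=\rho$ and, using connectedness of (each component of) $\overline{M}$, that $\eta_{\rho_\eta}$ is cohomologous to $\eta$, and that both maps are group homomorphisms. In general this identifies $\mathrm{Hom}(\Gamma,\mathbb{R})$ with $\ker\big(\pi^*:H^1_{dR}(M)\to H^1_{dR}(\overline{M})\big)$, which is all of $H^1_{dR}(M)$ when $H^1_{dR}(\overline{M})=0$.

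The only non-formal point — and hence the step I expect to be the main obstacle — is the existence of the global $\Gamma$-equivariant primitive $f$ starting from a bare homomorphism $\rho$: locally on $M$ one can trivialize $\pi$ and choose local lifts, but patching them into a single smooth function on $\overline{M}$ with the exact translation behaviour is precisely the triviality of the $\mathbb{R}$-bundle $E_\rho$ (equivalently, a partition-of-unity averaging argument using that the fibers are affine over $\mathbb{R}$). Everything else — closedness, $\Gamma$-invariance, the cocycle identity for $\rho_\eta$, and independence of the auxiliary choices — is a routine verification.
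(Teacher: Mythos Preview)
Your argument is correct, and it takes a genuinely different route from the paper. The paper proceeds purely algebraically: it views $\Gamma$ as the quotient $\pi_1(M)/\pi_\#(\pi_1(\overline{M}))$, extends a given $\tau\in\mathrm{Hom}(\Gamma,\mathbb{R})$ to $\pi_1(M)$, factors through $H_1(M;\mathbb{Z})$, and then invokes the Universal Coefficient Theorem together with the de Rham isomorphism to obtain a class $[\eta]$ with $\mathrm{Per}_{[\eta]}=\tilde\tau$. In particular, the paper really only spells out the map $\mathrm{Hom}(\Gamma,\mathbb{R})\to H^1_{dR}(M)$ and leaves the word ``correspondence'' informal. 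Your approach is differential--geometric and more explicit: you build in both directions by producing a $\Gamma$-equivariant primitive $f$ on $\overline{M}$, using the triviality of the associated affine $\mathbb{R}$-bundle $(\overline{M}\times\mathbb{R})/\Gamma\to M$ for the nontrivial direction. This buys you two things the paper's proof does not make visible: first, the precise statement that the image is exactly $\ker\big(\pi^*:H^1_{dR}(M)\to H^1_{dR}(\overline{M})\big)$, so that the correspondence is a bijection only under the extra hypothesis $H^1_{dR}(\overline{M})=0$; second, the equivariant function $f$ satisfying $f\circ\gamma-f=\rho(\gamma)$ is exactly the object that the paper subsequently has to reconstruct by hand in Proposition~\ref{2.6} and Lemma~\ref{chapeaux pointu}, so your route meshes more directly with how the lemma is actually used. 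The paper's approach, on the other hand, is shorter and makes the connection with the period morphism $\mathrm{Per}_{[\eta]}$ immediate.
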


\begin{proof}
Equip $M := \overline{M}/\Gamma$ with the unique smooth structure for which the quotient projection
$\pi : \overline{M} \to M$ is a smooth normal covering.
Then $\Gamma \cong \pi_1(M)/\pi_\#(\pi_1(\overline{M}))$.
For every $\tau \in \mathrm{Hom}(\Gamma,\mathbb{R})$, one may extend $\tau$ to a morphism from $\pi_1(M)$ to $\mathbb{R}$ and factor it through $H_1(M,\mathbb{Z})$ to obtain an element
$\tilde{\tau} \in \mathrm{Hom}(H_1(M;\mathbb{Z}),\mathbb{R})$.
By the Universal coefficient theorem 
$\mathrm{Hom}(H_1(M;\mathbb{Z}),\mathbb{R}) \cong H^1(M;\mathbb{R})$, and by the de Rham Theorem
$H^1(M;\mathbb{R}) \cong H^1_{dR}(M)$.
Hence there exists $[\eta] \in H^1_{dR}(M)$ such that $\tilde{\tau}$ and $\mathrm{Per}_{[\eta]}$ coincide.
\end{proof}

\begin{PP}[Belgun - Goertsches - Petrecca]
\label{2.6}
Let $(\overline{M}, d\overline{\lambda})$ be an exact symplectic manifold, $\Gamma$ a group acting properly discontinuously on $\overline{M}$, and  $\tau \in \mathrm{Hom}(\Gamma,\mathbb{R})$ such that 
\(\gamma^* \overline{\lambda} = e^{\tau(\gamma)}\, \overline{\lambda}\), for every \(\gamma \in \Gamma.\)
Then one can naturally endow the quotient $\overline{M}/\Gamma$ with an exact LCS structure.
\end{PP}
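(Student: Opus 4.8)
The plan is to realise $M:=\overline M/\Gamma$ as an exact LCS manifold by conformally rescaling $\overline\lambda$ to a $\Gamma$-invariant $1$-form, the prospective Lee form on $M$ being manufactured from $\tau$ via Lemma~\ref{morphisme to lee}. As in that lemma, $\pi\colon\overline M\to M$ is a smooth normal covering, and I may assume $\overline M$ connected (otherwise one argues componentwise). First I would apply Lemma~\ref{morphisme to lee} to $-\tau\in\mathrm{Hom}(\Gamma,\mathbb R)$ to obtain a class $[\eta]\in H^1_{dR}(M)$ whose period morphism corresponds to $-\tau$, and fix a closed representative $\eta$.

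The first substantive step is to observe that $\pi^*\eta$ is exact. Indeed, a loop in $\overline M$ projects to a loop in $M$ whose class lies in $\pi_\#\bigl(\pi_1(\overline M)\bigr)=\ker\bigl(\pi_1(M)\twoheadrightarrow\Gamma\bigr)$, and $\mathrm{Per}_{[\eta]}$ vanishes there since $-\tau$ factors through $\Gamma$; hence all periods of $\pi^*\eta$ vanish and one can write $\pi^*\eta=d\overline f$ for some $\overline f\in C^\infty(\overline M)$, unique up to an additive constant. Next I would record the relevant cocycle identity: for $\gamma\in\Gamma$ one has $d(\overline f\circ\gamma)=\gamma^*\pi^*\eta=\pi^*\eta=d\overline f$, so $\overline f\circ\gamma-\overline f$ is a constant, and integrating $d\overline f=\pi^*\eta$ along a path from a point to its $\gamma$-translate and projecting to $M$ identifies this constant with $\mathrm{Per}_{[\eta]}$ of the corresponding class, namely $-\tau(\gamma)$. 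Consequently
\[
\gamma^*\bigl(e^{\overline f}\overline\lambda\bigr)=e^{\overline f\circ\gamma}\,\gamma^*\overline\lambda=e^{\overline f-\tau(\gamma)}\,e^{\tau(\gamma)}\,\overline\lambda=e^{\overline f}\overline\lambda,
\]
so $e^{\overline f}\overline\lambda$ is $\Gamma$-invariant and descends to a unique $\lambda\in\Omega^1(M)$ with $\pi^*\lambda=e^{\overline f}\overline\lambda$.

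It then remains to check that $(\lambda,\eta)$ is an exact LCS pair. The form $\eta$ is closed by construction, and nondegeneracy of $d_\eta\lambda$ can be tested after pulling back along the local diffeomorphism $\pi$: using the identity $d_\mu(e^\phi\beta)=e^\phi\,d_{\mu-d\phi}\beta$ with $\mu=\pi^*\eta=d\overline f$ and $\phi=\overline f$, one gets $\pi^*(d_\eta\lambda)=d_{d\overline f}\bigl(e^{\overline f}\overline\lambda\bigr)=e^{\overline f}\,d\overline\lambda$, which is nondegenerate because $d\overline\lambda$ is symplectic. Hence $(M,[(\lambda,\eta)])$ is an exact LCS manifold. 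For the word ``naturally'', one checks that replacing $\eta$ by another closed representative $\eta+du$, $u\in C^\infty(M)$, or $\overline f$ by $\overline f$ plus a constant, alters $(\lambda,\eta)$ only through an exact conformal equivalence, so the resulting exact LCS structure $[(\lambda,\eta)]$ depends only on $(\overline M,d\overline\lambda,\Gamma,\tau)$.

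The main obstacle I expect is concentrated in the middle paragraph: producing the Lee class with the correct sign (which is precisely why Lemma~\ref{morphisme to lee} is invoked with $-\tau$ rather than $\tau$) and rigorously matching the conformal weight $\overline f\circ\gamma-\overline f$ against the cocycle $\tau(\gamma)$, which requires some care with the covering-space monodromy $\pi_1(M)\to\Gamma$ and with the period morphism. Once that identity is in place, the verification of the LCS relation is a one-line Lichnerowicz-differential computation, and the only routine loose ends are the standing properness/freeness of the action and, if needed, the reduction to the connected case.
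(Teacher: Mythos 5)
Your proposal is correct and follows essentially the same route as the paper's proof: invoke Lemma~\ref{morphisme to lee} to turn $-\tau$ into a de Rham class $[\eta]$ on $\overline{M}/\Gamma$, take a primitive $\mu$ of the exact form $\pi^*\eta$, identify $\mu\circ\gamma-\mu$ with $-\tau(\gamma)$ by integrating along paths, conclude that $e^{\mu}\overline{\lambda}$ is $\Gamma$-invariant, and verify nondegeneracy via $\pi^*(d_\eta\lambda)=e^{\mu}d\overline{\lambda}$. The only differences are cosmetic (the paper absorbs the sign by redefining $\tau$ as $\gamma\mapsto -c_\gamma$ rather than feeding $-\tau$ into the lemma) plus your added, and welcome, check that the resulting structure is independent of the choices.
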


\begin{proof}
By assumption, there exists a constant $c_\gamma \in \mathbb{R}$ such that \(\gamma^* \overline{\lambda} = e^{c_\gamma}\, \overline{\lambda}\), for every $\gamma \in \Gamma$.
We then define an additive group morphism 
\(\tau : \Gamma \to \mathbb{R}: \gamma \mapsto -c_\gamma\).
\smallskip

Endow $M := \overline{M}/\Gamma$ with the unique smooth structure for which the quotient projection 
$\pi : \overline{M} \to M$ is a smooth normal covering, so that \(\Gamma \cong \pi_1(M)/\pi_\#(\pi_1(\overline{M}))
\).
By Lemma~\ref{morphisme to lee}, we can associate to $\tau$ an element $\tilde{\tau} \in \mathrm{Hom}(H_1(M;\mathbb{Z}),\mathbb{R})$, and there exists 
$[\eta] \in H^1_{dR}(M)$ such that $\tilde{\tau}$ and $\mathrm{Per}_{[\eta]}$ coincide.
\smallskip

Given that $\int \pi^* \eta$ vanishes on $\pi_1(\overline{M})$, the form $\pi^* \eta$ is exact on $\overline{M}$.
Let $\mu \in C^\infty(\overline{M})$ be a primitive of $\pi^* \eta$.
Fix ${x} \in \overline{M}$ and, for every $\gamma\in \Gamma$, choose a path 
$u_{\gamma} : [0,1] \to \overline{M}$ from ${x}$ to $\gamma({x})$.
Then
\[\tau(\gamma) = \int_{\pi \circ u_{\gamma}} \eta = \int_{u_\gamma} d\mu = \mu\circ \gamma({x}) - \mu({x}).
\]
Since this equality holds for every ${x} \in \overline{M}$, we obtain
$\mu \circ \gamma - \mu = \tau(\gamma)$, for all $\gamma \in \Gamma$.
It follows that $d\mu$ and $e^{\mu}\,\overline{\lambda}$ are $\Gamma$-invariant, and so $\pi^* \eta = d\mu$ and $\pi^* \lambda = e^{\mu}\overline{\lambda}$, which define, respectively, a closed $1$-form $\eta$ and a $1$-form $\lambda$ on $M$.
As \begin{align*}
    \pi^* d_\eta \lambda&= d_{\pi^* \eta} \pi^* \lambda \\
    &=d_{d\mu}e^{\mu}\overline{\lambda} \\
    &=e^{\mu} d\overline{\lambda},
\end{align*} and $\pi$ is a local diffeomorphism, the nondegeneracy of $d\overline{\lambda}$ then implies that of $d_\eta \lambda$.
So $\lambda$ is $\eta$-Liouville, and $(\lambda,\eta)$ induces an exact LCS structure on $M$.
\end{proof}

\begin{LL}
\label{chapeaux pointu}
Let $(\overline{M}, d\overline{\lambda})$ be an exact symplectic manifold, $\Gamma$ a group acting properly discontinuously on $\overline{M}$, and $\tau \in \mathrm{Hom}(\Gamma,\mathbb{R})$ such that $\gamma^* \overline\lambda = e^{\tau(\gamma)} \overline\lambda$, for every $\gamma \in \Gamma$.
If $\mu' \in C^\infty(\overline{M}/\Gamma)$ is such that $\gamma^* \mu' = \mu' + \tau(\gamma)$, for every $\gamma \in \Gamma$, then $d\mu'$ induces a closed $1$-form $\eta'$ on $\overline{M}/\Gamma$, and $\tau$ corresponds to $[\eta']$ via Lemma~\ref{morphisme to lee}.
\end{LL}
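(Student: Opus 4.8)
The plan is to check the two assertions separately: first that $d\mu'$ descends to a closed $1$-form on $M:=\overline M/\Gamma$, and then that the resulting de Rham class is exactly the image of $\tau$ under the map of Lemma~\ref{morphisme to lee}. Note that only the equivariance $\gamma^*\mu'=\mu'+\tau(\gamma)$ will actually be used; the symplectic data $(\overline M, d\overline\lambda)$ and the relation $\gamma^*\overline\lambda=e^{\tau(\gamma)}\overline\lambda$ merely provide the context in which this lemma is applied alongside Proposition~\ref{2.6}.

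For the descent, since $\tau(\gamma)\in\mathbb{R}$ is a constant on $\overline M$, differentiating $\gamma^*\mu'=\mu'+\tau(\gamma)$ gives $\gamma^*(d\mu')=d\mu'$ for every $\gamma\in\Gamma$. As $\pi:\overline M\to M$ is the normal covering with deck group $\Gamma$, a $\Gamma$-invariant form descends, so there is a unique $1$-form $\eta'$ on $M$ with $\pi^*\eta'=d\mu'$; it is closed because $\pi^*(d\eta')=d(d\mu')=0$ and $\pi^*$ is injective on forms.

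For the identification, I would compute $\mathrm{Per}_{[\eta']}$ and compare it with the morphism $\tilde\tau$ built from $\tau$ in the proof of Lemma~\ref{morphisme to lee}, namely the composite $\pi_1(M)\twoheadrightarrow\Gamma\xrightarrow{\tau}\mathbb{R}$ factored through $H_1(M;\mathbb{Z})$. Fix $x\in\overline M$ and a loop $c$ in $M$ based at $\pi(x)$; lift it to a path $\tilde c$ in $\overline M$ starting at $x$, so that $\tilde c(1)=\gamma_c\cdot x$ where $\gamma_c\in\Gamma$ is the monodromy of $[c]$. Then
\[
\int_c\eta'=\int_{\tilde c}\pi^*\eta'=\int_{\tilde c}d\mu'=\mu'(\gamma_c\cdot x)-\mu'(x)=\tau(\gamma_c),
\]
the last equality being $\gamma_c^*\mu'=\mu'+\tau(\gamma_c)$ evaluated at $x$. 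Since this holds for every loop, $\mathrm{Per}_{[\eta']}=\tilde\tau$, which is precisely the statement that $[\eta']$ corresponds to $\tau$ under the de Rham and universal coefficient identifications used in Lemma~\ref{morphisme to lee}.

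The argument is essentially bookkeeping; the only point requiring care is aligning the monodromy identification $\pi_1(M)/\pi_\#\pi_1(\overline M)\cong\Gamma$ — in particular the homomorphism-versus-antihomomorphism and orientation conventions — with the way $\tilde\tau$ was defined in Lemma~\ref{morphisme to lee}, so that the $\gamma_c$ produced by lifting $c$ is the same $\gamma$ appearing there. This is exactly the convention already fixed in the proof of Proposition~\ref{2.6}, where the same computation appears with the roles of $\eta$ and the primitive exchanged, so no genuinely new difficulty arises: one simply runs that computation in reverse.
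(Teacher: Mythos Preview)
Your proof is correct and follows essentially the same route as the paper: after noting that $d\mu'$ is $\Gamma$-invariant and hence descends, one lifts a loop, integrates $d\mu'$ along the lift, and uses $\gamma^*\mu'=\mu'+\tau(\gamma)$ to identify $\mathrm{Per}_{[\eta']}$ with $\tilde\tau$. Your write-up is in fact slightly more explicit than the paper's about the descent step and the convention issues, but the argument is the same.
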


\begin{proof}
From the previous proposition, we associate to $\tau$ an element 
$\tilde{\tau} \in \mathrm{Hom}(H_1(M;\mathbb{Z}),\mathbb{R})$, and there exists 
$[\eta] \in H^1_{dR}(M)$ such that $\tilde{\tau}$ and $\mathrm{Per}_{[\eta]}$ coincide.
\smallskip

Fix $x \in M$ and $\overline{x} \in \pi^{-1}(\{x\})$, and consider $\tilde{\gamma}$ a loop in $M$ based at $x$ corresponding to $\gamma$, i.e.\ $\tilde{\gamma}$ lifts to a path $\overline{\gamma}$ in $\overline{M}$ from $\overline{x}$ to $\gamma(\overline{x})$.
We then have
\[
\mathrm{Per}_{[\eta']}([\tilde{\gamma}]) 
= \int_{\tilde{\gamma}} \eta'
= \int_{\overline{\gamma}} d\mu'
= \mu'\circ \gamma(\overline{x}) - \mu'(\overline{x})
= \tau(\gamma).
\]
Thus $\tilde{\tau}$ and $\mathrm{Per}_{[\eta']}$ coincide, and therefore $[\eta] = [\eta']$.
\end{proof}

\begin{DD}
\label{construction tore d'application l.c.s.}
Let $(N,\xi)$ be a contact manifold, $\alpha$ be a contact form associated with $\xi$, and $\psi$ be a contactomorphism satisfying $\psi^* \alpha = e^h \alpha$, with $h \in C^\infty(N)$.
For every $k \in \mathbb{R}$, we define a diffeomorphism
\[
\rho_{(\psi,k-h)} : N \times \mathbb{R} \to N \times \mathbb{R}: (x,t) \mapsto (\psi(x), t + k - h(x)).
\]
And define the \textit{set of admissible values of $(\alpha,\psi)$} as
\[
\mathcal{A}_{(\alpha,\psi)} := \{\, k \in \mathbb{R} \mid \text{the $\mathbb{Z}$-action generated by $\rho_{(\psi,k-h)}$ is properly discontinuous} \,\}.
\]
For every $k \in \mathcal{A}_{(\alpha,\psi)}$, we define $N_{(\alpha,\psi,k)}$ as the quotient of $N \times \mathbb{R}$ by the $\mathbb{Z}$-action generated by $\rho_{(\psi,k-h)}$.
Let $(N \times \mathbb{R}, d(e^t p_1^* \alpha))$\footnote{Here, $p_1 : N \times \mathbb{R} \to N$ denotes the projection onto the first factor.
} be the ordinary symplectization of $(N,\alpha)$; one then checks that
\begin{align*}
    \rho_{(\psi,k-h)}^* e^t p_1^* \alpha 
    = e^k e^t p_1^* \alpha.
\end{align*}
If $\pi : N \times \mathbb{R} \to N_{(\alpha,\psi,k)}$ is the usual projection onto the quotient, then by the proof of Proposition~\ref{2.6}, there exists a function $\mu \in C^\infty(N \times \mathbb{R})$ satisfying $\rho_{(\psi,k-h)}^* \mu = \mu - k$ and such that the pair $(\lambda,\eta)$ defined by
\[
\pi^* \lambda = e^{t+\mu} p_1^* \alpha \ \ \text{and} \ \  
\pi^* \eta = d\mu
\]
is an exact LCS pair on $N_{(\alpha,\psi,k)}$.
The exact LCS manifold $(N_{(\alpha,\psi,k)},[(\lambda,\eta)])$ thus obtained is called the \textit{LCS mapping torus of size $k$ associated with $(\alpha,\psi)$}.
It is an exact LCS manifold of rank~$1$.
\end{DD}

\begin{EE}[Banyaga]
\label{labelo}
Let $(N,\xi)$ be a contact manifold.
A contactomorphism $\psi \in \mathrm{Cont}(N,\xi)$ is said to be \textit{strict} if there exists a contact form $\alpha$ associated with $\xi$ such that $\psi^* \alpha = \alpha$.
Then for every $k \in \mathbb{R}_0$, the diffeomorphism $\rho_{(\psi,k)}$ generates a properly discontinuous $\mathbb{Z}$-action.
And since $\rho_{(\psi,k)}^*(-t) = -t - k$, it follows from Lemma~\ref{chapeaux pointu} that \(\pi^* \lambda := p_1^* \alpha\) and \(\pi^* \eta = - dt\) define an exact LCS pair $(\lambda,\eta)$ inducing the LCS structure on $N_{(\alpha,\psi,k)}$.
\end{EE}

\begin{PP}[Belgun - Goertsches - Petrecca]
\label{lcs -> symplectic}
Let $(\lambda,\eta)$ be an exact LCS pair on a manifold $M$, and 
$\pi : \overline{M} \to M$ a normal covering such that $\pi^*\eta$ is exact.
For any choice of a primitive $\mu \in C^\infty(\overline{M})$ of $\pi^* \eta$, we have that 
\( \overline{\lambda} := e^{-\mu}\, \pi^* \lambda \) is a Liouville form on $\overline{M}$ which is preserved up to a positive factor by the Deck transformations of $\pi$.
\end{PP}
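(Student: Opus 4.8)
The plan is to reduce everything to a single identity that already appeared, up to signs, in the proof of Proposition~\ref{2.6}. First I would compute $d\overline{\lambda}$ directly, using $d\mu = \pi^*\eta$:
\[
d\overline{\lambda} = d\!\left(e^{-\mu}\,\pi^*\lambda\right) = e^{-\mu}\left(-\,d\mu\wedge\pi^*\lambda + \pi^*d\lambda\right) = e^{-\mu}\,\pi^*\!\left(d\lambda - \eta\wedge\lambda\right) = e^{-\mu}\,\pi^*(d_\eta\lambda).
\]
Since $\pi$ is a covering map, hence a local diffeomorphism, at each point the differential of $\pi$ is a linear isomorphism, so $\pi^*(d_\eta\lambda)$ is nondegenerate wherever $d_\eta\lambda$ is, i.e.\ everywhere; multiplication by the nowhere-vanishing function $e^{-\mu}$ preserves nondegeneracy. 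As $d\overline{\lambda}$ is manifestly closed, being exact, it is a symplectic form, and therefore $\overline{\lambda}$ is a Liouville form on $\overline{M}$.

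For the equivariance under Deck transformations, let $\gamma$ be a Deck transformation of $\pi$, so $\pi\circ\gamma = \pi$. Pulling back, $\gamma^*\pi^*\eta = \pi^*\eta$, that is $d(\gamma^*\mu - \mu) = 0$; on the connected covering $\overline{M}$ this forces $\gamma^*\mu - \mu$ to be a constant, which I will call $c_\gamma \in \mathbb{R}$. Then
\[
\gamma^*\overline{\lambda} = e^{-\gamma^*\mu}\,\gamma^*\pi^*\lambda = e^{-c_\gamma}\,e^{-\mu}\,(\pi\circ\gamma)^*\lambda = e^{-c_\gamma}\,e^{-\mu}\,\pi^*\lambda = e^{-c_\gamma}\,\overline{\lambda},
\]
and the factor $e^{-c_\gamma}$ is positive, which is exactly the asserted preservation up to a positive factor. (Note that this also recovers the additive morphism $\gamma\mapsto -c_\gamma$ of Proposition~\ref{2.6}, consistently with the normalisation there.)

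I do not expect a genuine obstacle: the statement is essentially a bookkeeping consequence of the definition of the Lichnerowicz differential together with the fact that a primitive of a Deck-invariant exact form transforms by additive constants. The only point deserving a word of care is the passage from "$\gamma^*\mu-\mu$ is locally constant" to "$\gamma^*\mu-\mu$ is constant" in the second paragraph, which is why one works with a connected normal covering; on a disconnected covering the $c_\gamma$ should be read as a locally constant function, and the conclusion is unchanged.
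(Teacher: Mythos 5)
Your proposal is correct and follows essentially the same route as the paper: compute $d\overline{\lambda}=e^{-\mu}\pi^*(d_\eta\lambda)$, use that $\pi$ is a local diffeomorphism to get nondegeneracy, and observe that $\gamma^*\mu-\mu$ is a constant $c_\gamma$ so that $\gamma^*\overline{\lambda}=e^{-c_\gamma}\overline{\lambda}$. The only difference is cosmetic: the paper additionally verifies that $\gamma\mapsto -c_\gamma$ is a group morphism (used later), which the statement itself does not require and which you rightly relegate to a remark.
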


\begin{proof}
Let $\mu \in C^\infty(\overline{M})$ be a primitive of $\pi^*\eta$. For every 
$\gamma \in \mathrm{Deck}(\pi)$, the function $\mu \circ \gamma$ is again a primitive of 
$\pi^*\eta$ and $\mu - \mu \circ \gamma = c_\gamma$, where $c_\gamma$ is a constant
independent of the choice of the primitive $\mu$ of $\pi^*\eta$.
One then checks that for all $\gamma, \gamma' \in \mathrm{Deck}(\pi)$, we have
\begin{align*}
     c_{\gamma \circ \gamma'} &= \mu - \mu \circ \gamma \circ \gamma'\\
     &= \mu - \mu \circ \gamma + \mu \circ \gamma  - \mu \circ \gamma \circ \gamma'\\
     &= c_\gamma + c_{\gamma'}.
\end{align*}
Thus \(\tau : \mathrm{Deck}(\pi) \to \mathbb{R}: \gamma \mapsto -c_\gamma,\)
defines a group morphism satisfying $\mu \circ \gamma - \mu = \tau(\gamma)$ for all 
$\gamma \in \mathrm{Deck}(\pi)$.
We set \( \overline{\lambda} := e^{-\mu} \pi^* \lambda \).
As $d\overline{\lambda} = e^{-\mu} \pi^* d_\eta \lambda$ and $\pi$ is a local diffeomorphism, the nondegeneracy of $d_\eta \lambda$ implies that of $d\overline{\lambda}$.
So $\overline{\lambda}$ is a Liouville form on $\overline{M}$, and satisfies
$\gamma^* \overline{\lambda} = e^{\tau(\gamma)} \overline{\lambda}$, for every $\gamma \in \Gamma$.
\end{proof}

Let $\lambda$ be an $\eta$-Liouville form on $M$.
The \textit{$\eta$-Liouville vector field} $Z_\lambda \in \mathfrak{X}(M)$ associated with $\lambda$ is defined by \(
\iota_{Z_\lambda} d_\eta \lambda = \lambda\).
For every function $h \in C^\infty(M)$, the LCS Hamiltonian vector field $X_h$ associated with $h$ is defined by \(\iota_{X_h} d_\eta \lambda = d_\eta h\).
For all $g,h \in C^\infty(M)$, the $(\eta+dg)$-Liouville vector field associated with 
$e^g(\lambda + d_\eta h)$ is equal to $Z_\lambda + X_h$.
Indeed,
\begin{align*}
    \iota_{Z_\lambda +X_h} d_{\eta+dg}(e^g(\lambda+ d_\eta h)) &=e^g \iota_{Z_\lambda +X_h} d_{\eta}(\lambda+ d_\eta h) \\
    &= e^g (\iota_{Z_\lambda} d_\eta \lambda + \iota_{X_h} d_\eta \lambda) \\
    &= e^g(\lambda+ d_\eta h).
\end{align*}

\begin{PP}
\label{relever de liouville}
Let $(\lambda,\eta)$ be an exact LCS pair on a manifold $M$, let $\pi : \overline{M} \to M$ be a normal covering such that $\pi^*\eta$ is exact, and $\mu \in C^\infty(\overline{M})$ a primitive of $\pi^*\eta$.
The $\eta$-Liouville vector field $Z_\lambda$ lifts to $\overline{M}$ as the Liouville vector field 
$\overline{Z}$ of $\overline{\lambda} := e^{-\mu} \pi^* \lambda$.
For every $h \in C^\infty(M)$, the LCS Hamiltonian vector field $X_h \in \mathfrak{X}(M)$ lifts to a Hamiltonian vector field 
$X_{e^{-\mu} h \circ \pi} \in \mathfrak{X}(\overline{M})$ on $(\overline{M}, d\overline{\lambda})$.
\smallskip

Conversely, for every $\overline{h} \in C^\infty(\overline{M})$, the Hamiltonian vector field 
$X_{\overline{h}}$ on $(\overline{M}, d\overline{\lambda})$ descends to a LCS Hamiltonian vector field if and only if there exists a constant $c \in \mathbb{R}$ such that the function $e^\mu (\overline{h} - c)$ is $\mathrm{Deck}(\pi)$-invariant.
\end{PP}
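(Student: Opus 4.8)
The plan is to reduce every assertion to a pullback computation along $\pi$, exploiting that $\pi$ is a local diffeomorphism (so $\pi^{*}$ intertwines contraction by a vector field on $M$ with contraction by its lift to $\overline{M}$, and commutes with $d$), together with the two identities furnished by Proposition~\ref{lcs -> symplectic}: $\pi^{*}\lambda = e^{\mu}\overline{\lambda}$ and $d\overline{\lambda} = e^{-\mu}\,\pi^{*}d_{\eta}\lambda$, i.e. $\pi^{*}d_{\eta}\lambda = e^{\mu}\,d\overline{\lambda}$. I will also use the elementary identity $\pi^{*}(d_{\eta}h) = e^{\mu}\,d\!\left(e^{-\mu}\,h\circ\pi\right)$ for $h\in C^{\infty}(M)$, which is a one-line Leibniz computation from $\pi^{*}\eta = d\mu$.

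For the first statement, write $\widetilde{Z_{\lambda}}$ for the lift of $Z_{\lambda}$. Pulling back $\iota_{Z_{\lambda}}d_{\eta}\lambda = \lambda$ gives $\iota_{\widetilde{Z_{\lambda}}}\,\pi^{*}d_{\eta}\lambda = \pi^{*}\lambda$, that is $e^{\mu}\,\iota_{\widetilde{Z_{\lambda}}}d\overline{\lambda} = e^{\mu}\overline{\lambda}$, hence $\iota_{\widetilde{Z_{\lambda}}}d\overline{\lambda} = \overline{\lambda}$; nondegeneracy of $d\overline{\lambda}$ then forces $\widetilde{Z_{\lambda}}$ to be the Liouville vector field $\overline{Z}$ of $\overline{\lambda}$. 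For the second, writing $\widetilde{X_{h}}$ for the lift of $X_{h}$, pulling back $\iota_{X_{h}}d_{\eta}\lambda = d_{\eta}h$ yields $e^{\mu}\,\iota_{\widetilde{X_{h}}}d\overline{\lambda} = \pi^{*}(d_{\eta}h) = e^{\mu}\,d\!\left(e^{-\mu}\,h\circ\pi\right)$, so $\iota_{\widetilde{X_{h}}}d\overline{\lambda} = d\!\left(e^{-\mu}\,h\circ\pi\right)$, and again nondegeneracy gives $\widetilde{X_{h}} = X_{e^{-\mu}\,h\circ\pi}$ on $(\overline{M},d\overline{\lambda})$.

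For the converse, fix $\overline{h}\in C^{\infty}(\overline{M})$. If $e^{\mu}(\overline{h}-c)$ is $\mathrm{Deck}(\pi)$-invariant for some $c\in\mathbb{R}$, it descends to a function $g\in C^{\infty}(M)$ with $g\circ\pi = e^{\mu}(\overline{h}-c)$, i.e. $\overline{h} = e^{-\mu}\,g\circ\pi + c$; since Hamiltonian vector fields are unchanged by adding a constant to the Hamiltonian, $X_{\overline{h}} = X_{e^{-\mu}\,g\circ\pi}$, which by the second statement is the lift of $X_{g}$, hence $\mathrm{Deck}(\pi)$-invariant, so $X_{\overline{h}}$ descends precisely to the LCS Hamiltonian vector field $X_{g}$. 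Conversely, if $X_{\overline{h}}$ descends to an LCS Hamiltonian vector field $X_{g}$ with $g\in C^{\infty}(M)$, then both $X_{\overline{h}}$ and the lift $X_{e^{-\mu}\,g\circ\pi}$ of $X_{g}$ are lifts of $X_{g}$, hence coincide; contracting their common value into $d\overline{\lambda}$ and using nondegeneracy gives $d\overline{h} = d\!\left(e^{-\mu}\,g\circ\pi\right)$, so $\overline{h}-e^{-\mu}\,g\circ\pi$ is locally constant. Taking $\overline{M}$ connected (or arguing componentwise), this difference is a constant $c$, and then $e^{\mu}(\overline{h}-c) = g\circ\pi$ is $\mathrm{Deck}(\pi)$-invariant, as required.

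The computations are routine; the only genuine points of care are unpacking ``descends to an LCS Hamiltonian vector field'' as the conjunction of $\mathrm{Deck}(\pi)$-invariance of $X_{\overline{h}}$ and representability of the descended field as $X_{g}$ for an honest $g\in C^{\infty}(M)$ — which is exactly what the identity $\widetilde{X_{g}} = X_{e^{-\mu}\,g\circ\pi}$ supplies — and the harmless connectedness caveat in passing from ``locally constant'' to ``constant''. I do not expect a real obstacle here.
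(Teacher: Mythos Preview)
Your proof is correct and follows essentially the same approach as the paper's: both arguments pull back the defining equations $\iota_{Z_\lambda}d_\eta\lambda=\lambda$ and $\iota_{X_h}d_\eta\lambda=d_\eta h$ along $\pi$, use $d\overline{\lambda}=e^{-\mu}\pi^*d_\eta\lambda$ and the Leibniz identity $\pi^*(d_\eta h)=e^{\mu}d(e^{-\mu}h\circ\pi)$, and for the converse deduce $d\overline{h}=d(e^{-\mu}h\circ\pi)$ and integrate. Your explicit mention of the connectedness caveat is a small clarification absent from the paper's own proof.
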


\begin{proof}
Define $\overline{Z}, \overline{X}_h \in \mathfrak{X}(\overline{M})$ by $\pi_* \overline{Z} = Z_\lambda$ and $\pi_* \overline{X}_h = X_h$.
We then check that \begin{align*}
    \iota_{\overline{Z}} d \overline{\lambda}
&= e^{-\mu} \iota_{\pi_* Z_\lambda} d_{d\mu} \pi^* \lambda \\
    &= e^{-\mu} \pi^* (\iota_{Z_\lambda} d_\eta \lambda) \\
    &= e^{-\mu} \pi^* \lambda \\
    &= \overline{\lambda},
\end{align*} and  \begin{align*}
    \iota_{\overline{X}_h} d \overline{\lambda} &= \iota_{\overline{X}_h} e^{-\mu}\pi^* d_\eta \lambda \\
    &= e^{-\mu} \pi^*\iota_{X_h} d_\eta \lambda \\
    &= e^{-\mu} \pi^*d_{\eta} h \\
    &= d(e^{-\mu} h \circ \pi).
\end{align*}
Therefore $\overline{Z}$ coincides with the Liouville vector field 
$\overline{Z}$ of $\overline{\lambda}$ and $\overline{X}_h$ coincides with the Hamiltonian vector field 
$X_{e^{-\mu} h \circ \pi}$ associated with the function 
$e^{-\mu} h \circ \pi$.
\smallskip

If we are given $\overline{h} \in C^\infty(\overline{M})$ such that 
$e^\mu(\overline{h} - c)$ is $\mathrm{Deck}(\pi)$-invariant, then there exists 
$h \in C^\infty(M)$ such that $h \circ \pi = e^\mu(\overline{h} - c)$.
By the above, the LCS Hamiltonian vector field $X_h$ associated with $h$ lifts to 
$\overline{M}$ as the Hamiltonian vector field $X_{\overline{h}}$ associated with $\overline{h}$.
\smallskip

Conversely, if a Hamiltonian vector field $X_{\overline{h}}$ associated with 
$\overline{h} \in C^\infty(\overline{M})$ descends to an LCS Hamiltonian vector field $X_h$
for a function $h \in C^\infty(M)$, then \begin{align*}
    d \overline{h} &= \iota_{X_{\overline{h}}} d \overline{\lambda} \\
    &= \iota_{X_{\overline{h}}} e^{-\mu} \pi^* d_\eta \lambda \\
    &= e^{-\mu} \pi^* \iota_{X_h} d_\eta \lambda \\
    &= e^{-\mu} \pi^* d_\eta h \\
    &= d(e^{-\mu} h \circ \pi).
\end{align*}
Hence $\overline{h} = e^{-\mu} h \circ \pi + c$ for a constant $c \in \mathbb{R}$, so that 
$e^\mu(\overline{h} - c)$ is $\mathrm{Deck}(\pi)$-invariant.
\end{proof}

\begin{DD}
An \textit{exact conformal symplectomorphism} between two exact LCS manifolds $(M,[(\lambda,\eta)])$ and $(M',[(\lambda',\eta')])$ is a diffeomorphism 
$\varphi \in \mathrm{Diff}(M,M')$ such that the exact LCS pairs 
$(\varphi^* \lambda', \varphi^* \eta')$ and $(\lambda,\eta)$ are exactly conformally equivalent, i.e.\ there exists functions $g,h \in C^\infty(M)$ such that
\[
\varphi^* \lambda' = e^g(\lambda + d_\eta h)
\ \ \text{and} \ \ 
\varphi^* \eta' = \eta + dg.
\]
\end{DD}

\begin{EE}
\label{2.15}
Let $(M,[(\lambda,\eta)])$ be an exact LCS manifold and $X_h$ the LCS Hamiltonian vector field associated with $h \in C^\infty(M)$.
By Cartan’s formula, we have $\mathcal{L}_{X_h} \eta = d\,\eta(X_h)$.  
This implies that
\[ \frac{d}{dt}[\varphi_{X_h}^t{}^* \eta]|_{t=s} = d \eta(X_h)\circ \varphi_{X_h}^s, \ \  \text{with} \ \  \varphi^0_{X_h}{}^* \eta = \eta.\]
This equation has the unique solution
\[\varphi^t_{X_h}{}^* \eta = \eta + d\Big(\int_0^t \eta(X_h) \circ \varphi_{X_h}^s ds \Big).\]
Set $g_t := \int_0^t \eta(X_h) \circ \varphi_{X_h}^s\, ds$.  
Applying Cartan’s formula once more, we obtain
\begin{align*}
    \mathcal{L}_{X_h} \lambda &= \iota_{X_h} d \lambda + d \circ \iota_{X_h} \lambda \\
    &= \iota_{X_h} (d_\eta \lambda + \eta \wedge \lambda) + d_\eta (\iota_{X_h} \lambda) + \eta \wedge \iota_{X_h} \lambda \\
    &= \eta(X_h) \lambda + d_\eta (\iota_{X_h} \lambda - h). 
\end{align*}
This implies that
\[ \frac{d}{dt}[\varphi_{X_h}^t{}^* \lambda]|_{t=s} = \eta(X_h)\circ \varphi_{X_h}^s \  \varphi_{X_h}^s{}^*\lambda + \varphi_{X_h}^s{}^* d_\eta( \iota_{X_h} \lambda + h).\]
Now,
\begin{align*}
    \varphi_{X_h}^s{}^* d_\eta( \iota_{X_h} \lambda + h) &= d_{\varphi^s_{X_h}{}^* \eta} (\iota_{X_h} \varphi_{X_h}^s{}^* \lambda+h)\\
    &= d_{\eta + dg_s}(\iota_{X_h} \varphi_{X_h}^s{}^* \lambda+h) \\
    &= e^{g_s} d_\eta(e^{-g_s} (\iota_{X_h} \varphi_{X_h}^s{}^* \lambda+h)).
\end{align*}
Thus $\varphi_{X_h}^s{}^* \lambda$ solves
\[\frac{d}{dt}[\varphi_{X_h}^t{}^* \lambda]|_{t=s} = \eta(X_h)\circ \varphi_{X_h}^s \  \varphi_{X_h}^s{}^*\lambda + e^{g_s} d_\eta(e^{-g_s} (\iota_{X_h} \varphi_{X_h}^s{}^* \lambda+h)),\]
with the initial condition $\varphi_{X_h}^0{}^* \lambda = \lambda$.  
This implies that
\[
    \varphi_{X_h}^t{}^* \lambda
    = e^{g_t}\Big( \lambda
        + d_\eta \Big(
            \int_0^t e^{-g_s} (\iota_{X_h} \lambda + h)\circ \varphi_{X_h}^s\, ds
        \Big)
      \Big).
\]
Hence the time-$t$ flow of a Hamiltonian vector field is an exact conformal symplectomorphism.
\end{EE}

\begin{PP}
\label{2.16}
Let $(\overline{M}, d\overline{\lambda})$ and $(\overline{M}', d\overline{\lambda}')$ be two exact symplectic manifolds, $\Gamma$ and $\Gamma'$ be groups acting properly discontinuously on $\overline{M}$ and $\overline{M}'$ respectively, and $\tau \in \mathrm{Hom}(\Gamma,\mathbb{R})$ and $\tau' \in \mathrm{Hom}(\Gamma',\mathbb{R})$ be such that $\gamma^* \overline{\lambda} = e^{\tau(\gamma)} \overline{\lambda}$ and $\gamma'^* \overline{\lambda}' = e^{\tau'(\gamma')} \overline{\lambda}'$ for all $\gamma \in \Gamma$ and $\gamma' \in \Gamma'$.
\smallskip

If $\overline{\varphi} \in \mathrm{Diff}(\overline{M},\overline{M}')$ permutes the actions of $\Gamma$ and $\Gamma'$, satisfies 
$\overline{\varphi}^* \overline{\lambda}' = \overline{\lambda} + d\overline{h}$ with 
$\overline{h} \in C^\infty(\overline{M})$, and if there exists a constant $c \in \mathbb{R}$ such that 
$e^\mu (\overline{h} - c)$ is $\Gamma$-invariant, then $\overline{\varphi}$ induces an exact conformal symplectomorphism
\(\varphi : (\overline{M}/\Gamma,[(\lambda,\eta)]) \to (\overline{M}'/\Gamma',[(\lambda',\eta')])\)
between the exact LCS manifolds induced on the quotient by Proposition~\ref{2.6}.
\end{PP}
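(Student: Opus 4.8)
The plan is to descend $\overline{\varphi}$ to the quotients and then compute the pullbacks of $\lambda'$ and $\eta'$ explicitly, using the primitives of the Lee forms furnished by the construction in Proposition~\ref{2.6}.

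\emph{Step 1 (descent of $\overline{\varphi}$).} First I would unwind ``permutes the actions'': conjugation by $\overline{\varphi}$ carries $\Gamma$ onto $\Gamma'$, so there is a group isomorphism $\theta : \Gamma \to \Gamma'$ with $\overline{\varphi}\circ\gamma = \theta(\gamma)\circ\overline{\varphi}$ for all $\gamma \in \Gamma$. Hence $\pi' \circ \overline{\varphi}$ is $\Gamma$-invariant and descends to a smooth map $\varphi : \overline{M}/\Gamma \to \overline{M}'/\Gamma'$ with $\varphi \circ \pi = \pi' \circ \overline{\varphi}$; applying the same reasoning to $\overline{\varphi}^{-1}$ (which intertwines $\theta^{-1}$) shows $\varphi$ is a diffeomorphism.

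\emph{Step 2 (the conformal factor and the Lee form).} Recall from the proof of Proposition~\ref{2.6} that the induced exact LCS pairs come from a primitive $\mu \in C^\infty(\overline{M})$ of $\pi^*\eta$ with $\pi^*\lambda = e^{\mu}\overline{\lambda}$, and likewise $\mu' \in C^\infty(\overline{M}')$ with $\pi'^*\eta' = d\mu'$ and $\pi'^*\lambda' = e^{\mu'}\overline{\lambda}'$; $\Gamma$-invariance of $\pi^*\lambda$ and of $\pi'^*\lambda'$ shows that $\mu\circ\gamma - \mu = -\tau(\gamma)$ and $\mu'\circ\gamma' - \mu' = -\tau'(\gamma')$ (the sign playing no role below). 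I would then set $G := \mu' \circ \overline{\varphi} - \mu \in C^\infty(\overline{M})$. The key point is that $G$ is $\Gamma$-invariant. To see this, apply $\gamma^*$ to $\overline{\varphi}^*\overline{\lambda}' = \overline{\lambda} + d\overline{h}$: using $\overline{\varphi}\circ\gamma = \theta(\gamma)\circ\overline{\varphi}$ together with $\theta(\gamma)^*\overline{\lambda}' = e^{\tau'(\theta(\gamma))}\overline{\lambda}'$ on the left and $\gamma^*\overline{\lambda} = e^{\tau(\gamma)}\overline{\lambda}$ on the right, then taking $d$ and invoking the nondegeneracy of $d\overline{\lambda}$, one obtains $\tau = \tau' \circ \theta$; feeding this into the two cocycle relations makes the constant shift in $G \circ \gamma - G$ vanish. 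Thus $G = g \circ \pi$ for a unique $g \in C^\infty(\overline{M}/\Gamma)$, and since $\pi^*\varphi^*\eta' = \overline{\varphi}^* d\mu' = d(G + \mu) = \pi^*(\eta + dg)$ and $\pi$ is a local diffeomorphism, $\varphi^*\eta' = \eta + dg$.

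\emph{Step 3 (the Liouville form).} This is where the hypothesis on $\overline{h}$ enters. I would set $H := e^{\mu}(\overline{h} - c)$, which is $\Gamma$-invariant by assumption, hence $H = h \circ \pi$ for a unique $h \in C^\infty(\overline{M}/\Gamma)$ (equivalently, by Proposition~\ref{relever de liouville}, $X_{\overline{h}}$ descends to the LCS Hamiltonian vector field $X_h$). Then, using $\varphi \circ \pi = \pi' \circ \overline{\varphi}$, $\pi'^*\lambda' = e^{\mu'}\overline{\lambda}'$, and $\mu' \circ \overline{\varphi} = G + \mu$,
\[
\pi^{*}\varphi^{*}\lambda'
= \overline{\varphi}^{*}\big(e^{\mu'}\overline{\lambda}'\big)
= e^{\mu'\circ\overline{\varphi}}\big(\overline{\lambda} + d\overline{h}\big)
= e^{G}\big(\pi^{*}\lambda + e^{\mu}\,d\overline{h}\big).
\]
Since $\overline{h} = e^{-\mu}H + c$, the Leibniz rule gives $e^{\mu}\,d\overline{h} = dH - H\,d\mu = \pi^*(dh) - \pi^*(h\,\eta) = \pi^*(d_\eta h)$, whence $\pi^*\varphi^*\lambda' = \pi^{*}\big(e^{g}(\lambda + d_\eta h)\big)$ and therefore $\varphi^*\lambda' = e^{g}(\lambda + d_\eta h)$. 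Combined with Step~2 this says exactly that $(\varphi^*\lambda', \varphi^*\eta')$ and $(\lambda,\eta)$ are exactly conformally equivalent, i.e. $\varphi$ is an exact conformal symplectomorphism.

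\emph{Main obstacle.} The only step that is not pure bookkeeping is the $\Gamma$-invariance of $G$ in Step~2, and within it the identity $\tau = \tau' \circ \theta$; this is the one place where the symplectic (as opposed to merely smooth) structure is genuinely used, namely through the nondegeneracy of $d\overline{\lambda}$ after differentiating $\overline{\varphi}^*\overline{\lambda}' = \overline{\lambda} + d\overline{h}$. Everything else reduces to pulling back along the local diffeomorphism $\pi$ and to the bookkeeping role of the constant $c$, which is precisely what allows the ``difference primitive'' $\overline{h}$ to descend to the base.
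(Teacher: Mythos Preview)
Your proof is correct and follows essentially the same route as the paper: descend $\overline{\varphi}$ to $\varphi$, prove $\tau=\tau'\circ\theta$ so that $G=\mu'\circ\overline{\varphi}-\mu$ is $\Gamma$-invariant and yields $g$, then compute $\pi^*\varphi^*\lambda'$ and identify it as $\pi^*\big(e^g(\lambda+d_\eta h)\big)$. The only noteworthy difference is in how you obtain $\tau=\tau'\circ\theta$: you take $d$ of $\overline{\varphi}^*\overline{\lambda}'=\overline{\lambda}+d\overline{h}$ and use the nondegeneracy of $d\overline{\lambda}$, whereas the paper compares $\gamma^*\overline{\varphi}^*\overline{\lambda}'$ with $\overline{\varphi}^*\varphi_\#(\gamma)^*\overline{\lambda}'$ directly at the level of $1$-forms, using $\gamma^*d\overline{h}=e^{-\tau(\gamma)}d\overline{h}$ (which follows from the $\Gamma$-invariance of $e^\mu(\overline{h}-c)$). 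Your variant is slightly cleaner in that it postpones the use of the hypothesis on $\overline{h}$ to Step~3; both are equally valid.
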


\begin{proof}
Endow $M := \overline{M}/\Gamma$ and $M' := \overline{M}'/\Gamma'$ with the unique differentiable structures for which the quotient projections $\pi : \overline{M} \to M$ and $\pi' : \overline{M}' \to M'$ are respectively normal coverings.
\smallskip

From Proposition \ref{2.6}, choose $\mu\in C^\infty(\overline{M})$ and $\mu'\in C^\infty(\overline{M}')$ satisfying $\gamma^* \mu = \mu + \tau(\gamma)$ and $\gamma'^* \mu' = \mu' + \tau'(\gamma')$, for all $\gamma \in \Gamma$ and $\gamma' \in \Gamma'$.
We construct exact LCS pairs $(\lambda,\eta)$ and $(\lambda',\eta')$ on $M$ and $M'$ defined, respectively, by $\pi^* \eta = d \mu$, $\pi^* \lambda = e^{\mu} \overline{\lambda}$ and $\pi'^* \eta' = d \mu'$, $\pi'^* \lambda' = e^{\mu'} \overline{\lambda}'$.
\smallskip

Since $\overline{\varphi}$ permutes the actions of $\Gamma$ and $\Gamma'$, there exists $\varphi\in \text{Diff}(M,M')$ such that $\varphi \circ \pi = \pi' \circ \overline{\varphi}$ and $\varphi_\#: \pi_1(M) \to \pi_1(M')$ induces an isomorphism between $\Gamma$ and $\Gamma'$ satisfying $\varphi_\#(\gamma) \circ \overline{\varphi} = \overline{\varphi} \circ \gamma$, for all $\gamma \in \Gamma$.
If $e^\mu (\overline{h}-c)$ is $\Gamma$-invariant, there exists $h \in C^\infty(M)$ such that $ e^\mu (\overline{h}-c) = h \circ \pi$ and $\gamma^* d\overline{h} = e^{-\tau(\gamma)} d \overline{h}$, for all $\gamma \in \Gamma$.
Hence
\begin{align*}
    e^{-\tau'(\varphi_\#(\gamma))}\overline{\varphi}^*  \overline{\lambda}' &= \overline{\varphi}^* \circ \varphi_\#(\gamma)^* \overline{\lambda}' \\
    &= \gamma^* \circ \overline{\varphi}^* \overline{\lambda}' \\
    &= \gamma^* (\overline{\lambda} + d \overline{h}) \\
    &= e^{-\tau(\gamma)} (\overline{\lambda} + d \overline{h}),
\end{align*}
implies that $\tau = \tau' \circ \varphi_\#$.
Thus $\mu' \circ \overline{\varphi} - \mu$ is $\Gamma$-invariant and there exists $g \in C^\infty(M)$ such that $\mu' \circ \overline{\varphi} - \mu = g \circ \pi$. Indeed, for all $\gamma \in \Gamma$, we have
\begin{align*}
\gamma^* (\mu' \circ \overline{\varphi} - \mu) &= \mu' \circ \varphi_\#(\gamma) \circ \overline{\varphi} - \mu \circ \gamma \\
&= (\mu' + \tau'(\varphi_\#(\gamma))) \circ \overline{\varphi} - (\mu + \tau(\gamma))\\
&= \mu' \circ \overline{\varphi} - \mu + \tau'(\varphi_\#(\gamma)) - \tau(\gamma) \\
&= \mu' \circ \overline{\varphi} - \mu.
\end{align*}
Then,
\begin{align*}
    \pi^* \circ \varphi^* \lambda' &= \overline{\varphi}^* \circ \pi'^* \lambda' \\
    &=\overline{\varphi}^* e^{\mu'} \overline{\lambda}' \\
    &= e^{\mu' \circ \overline{\varphi}} (\overline{\lambda}+d\overline{h}) \\
    &= e^{\mu' \circ \overline{\varphi}-\mu} ( \pi^* \lambda + \pi^*d_{d\mu} e^\mu (\overline{h}-c)) \\
    &= \pi^* e^g (\lambda+d_\eta h).
\end{align*}
And since $\pi$ is a local diffeomorphism, $\varphi$ is an exact conformal symplectomorphism between $(M,[(\lambda,\eta)])$ and $(M',[(\lambda',\eta')])$.
\end{proof}

\begin{LL}
\label{castafior}
The set of admissible values and the isomorphism class of LCS mapping tori do not depend on the choice of contact form.
\end{LL}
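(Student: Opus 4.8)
The plan is to reduce both claims to the elementary fact that any two contact forms defining the (co-oriented) contact structure $\xi$ differ by a positive factor, together with Proposition~\ref{2.16}. Fix a contact form $\alpha$ with $\psi^*\alpha = e^h\alpha$, and let $\alpha' = e^u\alpha$ be an arbitrary second contact form for $\xi$, with $u \in C^\infty(N)$. A one-line computation gives $\psi^*\alpha' = e^{h'}\alpha'$ with $h' = h + u\circ\psi - u$; thus changing the contact form changes the conformal factor only by the coboundary $u\circ\psi - u$.

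Introduce the diffeomorphism $\Phi \colon N\times\mathbb{R} \to N\times\mathbb{R}$, $\Phi(x,t) = (x,\, t - u(x))$. I would first check by direct substitution that $\Phi$ intertwines the two time-shift maps of Definition~\ref{construction tore d'application l.c.s.}, namely $\Phi\circ\rho_{(\psi,\,k-h)} = \rho_{(\psi,\,k-h')}\circ\Phi$ for every $k\in\mathbb{R}$. Since $\Phi$ is a diffeomorphism, conjugation by $\Phi$ carries the $\mathbb{Z}$-action generated by $\rho_{(\psi,k-h)}$ onto the one generated by $\rho_{(\psi,k-h')}$, and proper discontinuity is preserved under this conjugation; hence $\mathcal{A}_{(\alpha,\psi)} = \mathcal{A}_{(\alpha',\psi)}$, which is the first assertion.

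For the isomorphism statement, fix $k$ in this common set of admissible values and present both mapping tori as in Definition~\ref{construction tore d'application l.c.s.}: the quotient of $(N\times\mathbb{R},\, d(e^t p_1^*\alpha))$ by $\Gamma := \langle\rho_{(\psi,k-h)}\rangle$, and the quotient of $(N\times\mathbb{R},\, d(e^t p_1^*\alpha'))$ by $\Gamma' := \langle\rho_{(\psi,k-h')}\rangle$. Using $p_1^*\alpha' = e^{u\circ p_1}\, p_1^*\alpha$ and $p_1\circ\Phi = p_1$, one verifies $\Phi^*(e^t p_1^*\alpha') = e^t p_1^*\alpha$. In the notation of Proposition~\ref{2.16} this says that $\overline\varphi := \Phi$ satisfies $\overline\varphi^*\overline\lambda' = \overline\lambda + d\overline h$ with $\overline h = 0$, so the constant $c = 0$ makes $e^\mu(\overline h - c) = 0$ trivially $\Gamma$-invariant; combined with the intertwining relation of the previous paragraph, $\overline\varphi$ permutes the actions of $\Gamma$ and $\Gamma'$. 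Proposition~\ref{2.16} then yields an exact conformal symplectomorphism between $N_{(\alpha,\psi,k)}$ and $N_{(\alpha',\psi,k)}$, which gives the second assertion.

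There is essentially no analytic content here; the only points requiring care are the sign in the definition of $\Phi$ (it must be $t \mapsto t - u(x)$ rather than $t \mapsto t + u(x)$, so that $\Phi$ itself, and not its inverse, conjugates $\rho_{(\psi,k-h)}$ to $\rho_{(\psi,k-h')}$) and the implicit use of the co-orientation of $\xi$ to write $\alpha' = e^u\alpha$ with a globally defined $u$. Once $\Phi$ is set up correctly, the hypotheses of Proposition~\ref{2.16} are immediate, since $\overline h$ can be taken to vanish.
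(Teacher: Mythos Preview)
Your proof is correct and essentially identical to the paper's own argument: the paper also writes the second contact form as $e^{f}\alpha$, introduces exactly the same shift $\overline{\varphi}(x,t)=(x,t-f(x))$, verifies the same intertwining relation with the $\rho$-maps to get $\mathcal{A}_{(\alpha,\psi)}=\mathcal{A}_{(e^{f}\alpha,\psi)}$, checks $\overline{\varphi}^{*}(e^{t}p_{1}^{*}(e^{f}\alpha))=e^{t}p_{1}^{*}\alpha$, and then invokes Proposition~\ref{2.16}. Your explicit remark that one may take $\overline{h}=0$ (hence $c=0$) in the hypotheses of Proposition~\ref{2.16} is a point the paper leaves implicit, but the approach is the same.
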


\begin{proof}
Let $(N,\xi)$ be a contact manifold, $\alpha$ a contact form associated with $\xi$, and $\psi \in \text{Cont}(N,\xi)$ satisfying $\psi^* \alpha = e^h \alpha$ with $h \in C^\infty(N)$.
For any function $f \in C^\infty(N)$, we have $\psi^*(e^f \alpha) = e^{h+f\circ \psi-f}(e^f\alpha)$.
We define an exact symplectomorphism
$$\overline{\varphi} : (N \times \mathbb{R},d(e^tp_1^* \alpha)) \to (N \times \mathbb{R},d(e^t p_1^*(e^f\alpha))) : (x,t) \mapsto (x,t-f(x))$$
and we verify that for $(x,t) \in N \times \mathbb{R}$ and $k \in \mathbb{R}$, we have
\begin{align*}
    \overline{\varphi} \circ \rho_{(\psi,k-h)}(x,t) &= \overline{\varphi}(\psi(x),t+k-h(x)) \\
    &= (\psi(x),t+k-h(x)-f\circ \psi(x)) \\
    &= \rho_{(\psi, k-h-f\circ \psi+f)}(x,t-f(x)) \\
    &= \rho_{(\psi, k-h-f\circ \psi+f)} \circ \overline{\varphi}(x,t).
\end{align*}
The action generated by $\rho_{(\psi,k-h-f\circ \psi+f)}$ is properly discontinuous if and only if the one generated by $\rho_{(\psi,k-h)}$ is as well, and therefore $\mathcal{A}_{(\alpha,\psi)} = \mathcal{A}_{(e^f\alpha,\psi)}$.
\smallskip

Moreover, since $\overline{\varphi}^* (e^t p_1^*(e^f\alpha)) = e^t p_1^* \alpha$, by Proposition \ref{2.16}, $\overline{\varphi}$ induces an exact conformal symplectomorphism $\varphi$ between the mapping tori $(N_{(\alpha,\psi,k)},[(\lambda,\eta)])$ and $(N_{(e^f\alpha,\psi,k)},[(\lambda',\eta')])$.
\end{proof}

\begin{PP}
\label{2.28}
Let $(M,[(\lambda,\eta)])$ and $(M',[(\lambda',\eta')])$ be two exact LCS manifolds, and $\pi : \overline{M} \to M$ and $\pi' : \overline{M}' \to M'$ two normal coverings such that $\pi^* \eta$ and $\pi'^* \eta'$ are exact.
If $\varphi$ is an exact conformal symplectomorphism between $(M,[(\lambda,\eta)])$ and $(M',[(\lambda',\eta')])$ which lifts to a diffeomorphism $\overline{\varphi}: \overline{M} \to \overline{M}'$, then for any choice of a primitive $\mu'$ of $\pi'^* \eta'$, there exists a primitive $\mu$ of $\pi^* \eta$ such that $\overline{\varphi}$ is an exact symplectomorphism between $(\overline{M},d\overline\lambda)$ and $(\overline{M}',d\overline\lambda')$, where $\overline\lambda := e^{-\mu} \pi^* \lambda$ and $\overline\lambda' := e^{-\mu'} \pi'^* \lambda'$.
\end{PP}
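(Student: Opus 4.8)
The plan is to produce the primitive $\mu$ explicitly in terms of $\mu'$, of the function $g$ coming from exact conformal equivalence, and of the lift $\overline{\varphi}$, and then to verify by a direct computation that $\overline{\varphi}$ pulls $\overline{\lambda}'$ back to $\overline{\lambda}$ modulo an exact $1$-form.

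First I would fix $g,h \in C^\infty(M)$ witnessing that $\varphi$ is an exact conformal symplectomorphism, so that $\varphi^* \lambda' = e^g(\lambda + d_\eta h)$ and $\varphi^* \eta' = \eta + dg$. Since $\overline{\varphi}$ is a lift of $\varphi$, it satisfies $\pi' \circ \overline{\varphi} = \varphi \circ \pi$, hence $\overline{\varphi}^* \circ \pi'^* = \pi^* \circ \varphi^*$. I then set
\[
\mu := \mu' \circ \overline{\varphi} - g \circ \pi \in C^\infty(\overline{M}),
\]
and check that $d\mu = \overline{\varphi}^* d\mu' - d(g\circ\pi) = \overline{\varphi}^* \pi'^* \eta' - \pi^* dg = \pi^* \varphi^* \eta' - \pi^* dg = \pi^*(\eta + dg) - \pi^* dg = \pi^* \eta$, so $\mu$ is indeed a primitive of $\pi^* \eta$. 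With this choice I put $\overline{\lambda} := e^{-\mu}\pi^*\lambda$ and $\overline{\lambda}' := e^{-\mu'}\pi'^*\lambda'$; by Proposition~\ref{lcs -> symplectic} these are Liouville forms on $\overline{M}$ and $\overline{M}'$ respectively, so it only remains to see that $\overline{\varphi}$ is \emph{exact}.

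Next I would compute $\overline{\varphi}^* \overline{\lambda}'$, using $\overline{\varphi}^* \pi'^* = \pi^* \varphi^*$ together with the identity $\pi^* d_\eta f = d_{d\mu}(f\circ\pi) = e^{\mu}\, d\big(e^{-\mu}(f\circ\pi)\big)$ for $f \in C^\infty(M)$ — the same device already used in the proof of Proposition~\ref{2.6}. This gives
\[
\overline{\varphi}^* \overline{\lambda}'
= e^{-\mu'\circ\overline{\varphi}}\,\pi^* \varphi^* \lambda'
= e^{-\mu'\circ\overline{\varphi}}\, e^{g\circ\pi}\big(\pi^*\lambda + \pi^* d_\eta h\big)
= e^{-\mu}\big(\pi^*\lambda + e^{\mu} d(e^{-\mu}\, h\circ\pi)\big),
\]
where the last equality uses $-\mu'\circ\overline{\varphi} + g\circ\pi = -\mu$ by the definition of $\mu$. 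Hence
\[
\overline{\varphi}^* \overline{\lambda}' = e^{-\mu}\pi^*\lambda + d\big(e^{-\mu}\, h\circ\pi\big) = \overline{\lambda} + d\overline{h}, \qquad \overline{h} := e^{-\mu}\, h\circ\pi \in C^\infty(\overline{M}),
\]
so $\overline{\varphi}^* \overline{\lambda}' - \overline{\lambda}$ is exact; in particular $\overline{\varphi}^* d\overline{\lambda}' = d\overline{\lambda}$, and $\overline{\varphi}$ is an exact symplectomorphism between $(\overline{M}, d\overline{\lambda})$ and $(\overline{M}', d\overline{\lambda}')$.

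The whole argument hinges on the choice $\mu = \mu'\circ\overline{\varphi} - g\circ\pi$, which is in fact forced: it is the unique modification of $\mu'\circ\overline{\varphi}$ making its differential equal to $\pi^*\eta$ rather than $\pi^*\eta + d(g\circ\pi)$. I do not expect a genuine obstacle here; the only points needing a little care are that $\overline{\varphi}$ being a lift means $\pi'\circ\overline{\varphi} = \varphi\circ\pi$, so pullbacks commute with the covering projections, and that $\overline{h}$ need not descend to $M$ — which is irrelevant, since the notion of exact symplectomorphism only requires the difference of Liouville forms to be exact on the total space $\overline{M}$. The remainder is bookkeeping of conformal factors.
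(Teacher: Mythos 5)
Your proposal is correct and follows essentially the same route as the paper: you choose the same primitive $\mu = \mu'\circ\overline{\varphi} - g\circ\pi$, verify $d\mu = \pi^*\eta$ via $\overline{\varphi}^*\pi'^* = \pi^*\varphi^*$, and obtain $\overline{\varphi}^*\overline{\lambda}' = \overline{\lambda} + d(e^{-\mu}\,h\circ\pi)$ by the same conformal-factor bookkeeping. No gaps.
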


\begin{proof}
According to Proposition \ref{lcs -> symplectic}, for any choice of a primitive $\mu'$ of $\pi'^* \eta'$, we have that $\overline\lambda' := e^{-\mu'} \pi'^* \lambda'$ is a Liouville form on $\overline{M}'$.
By hypothesis, $\varphi : M \to M'$ lifts to a diffeomorphism $\overline{\varphi} : \overline{M} \to \overline{M}'$ satisfying $\varphi \circ \pi = \pi' \circ \overline{\varphi}$, and there exist $g,h \in C^\infty(M)$ such that $\varphi^* \lambda' = e^g(\lambda + d_\eta h)$ and $\varphi^* \eta' = \eta + dg$.
We then verify that \begin{align*}
    \pi^* \eta &= \pi^* \circ \varphi^* \eta' - \pi^* dg \\
    &= \overline{\varphi}^* \circ \pi'^* \eta' - d(g\circ \pi) \\
    &= d( \mu' \circ \overline{\varphi} - g \circ \pi).
\end{align*}
We then set $\mu := \mu' \circ \overline{\varphi} - g \circ \pi$ and compute
\begin{align*}
\overline{\varphi}^* (e^{-\mu'} \pi'^* \lambda') & = e^{-\mu' \circ \overline{\varphi}} (\pi'\circ \overline{\varphi})^* \lambda' \\
&= e^{-\mu' \circ \overline{\varphi}} \pi^* \circ \varphi^* \lambda' \\
&= e^{g \circ \pi - \mu' \circ \overline{\varphi}} \pi^*(\lambda + d_\eta h) \\
&= e^{g \circ \pi - \mu' \circ\overline{\varphi}} \pi^*\lambda + e^{g \circ \pi - \mu' \circ \overline{\varphi}} d_{\mu' \circ \overline{\varphi} -g \circ \pi} \pi^*h) \\
&= e^{g \circ \pi - \mu' \circ \overline{\varphi}} \pi^*\lambda + d(e^{g \circ \pi - \mu' \circ \overline{\varphi}} \pi^* h).
\end{align*}
\end{proof}

\begin{DD}[Belgun - Goertsches - Petrecca]
Let $(M,[(\lambda,\eta)])$ be an exact LCS manifold and let $\pi : M_0 \to M$ be a normal covering such that $\pi^* \eta$ is exact.
We say that the covering $\pi$ is \textit{minimal} if $\text{Per}_{[\eta]}$ is injective, i.e. the rank of $\text{Deck}(\pi)$ is equal to the LCS rank of $(M,[(\lambda,\eta)])$.
\end{DD}

\begin{PP}[Belgun - Goertsches - Petrecca]
\label{2.10}
Every exact LCS manifold admits a unique minimal covering up to isomorphism.
\end{PP}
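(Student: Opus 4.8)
The plan is to reduce everything to the Galois correspondence for covering spaces, the de Rham class $[\eta]$ being all that matters (the Liouville form $\lambda$ plays no role). Fix a basepoint $x_0\in M$ and set $K:=\ker\big(\int\eta\colon\pi_1(M,x_0)\to(\mathbb R,+)\big)$. Since the target is abelian, $K$ is a normal subgroup containing the commutator subgroup, and $\int\eta$ factors through the Hurewicz map as $\mathrm{Per}_{[\eta]}\colon H_1(M;\mathbb Z)\to\mathbb R$; in particular the images of $\int\eta$ and of $\mathrm{Per}_{[\eta]}$ coincide, so that subgroup of $\mathbb R$ has rank equal to the LCS rank of $(M,[(\lambda,\eta)])$. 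The first step is to record the elementary dictionary: for a connected normal covering $\pi\colon M_0\to M$, the form $\pi^*\eta$ is exact if and only if $\pi_\#\pi_1(M_0)\subseteq K$. Indeed $\pi^*\eta$ is closed, a closed $1$-form on the connected manifold $M_0$ is exact exactly when all its periods vanish, and $\int_{\gamma}\pi^*\eta=\int_{\pi\circ\gamma}\eta$ shows these periods are precisely the values of $\int\eta$ on $\pi_\#\pi_1(M_0)$. Moreover, when $\pi^*\eta$ is exact, $\int\eta$ descends to a homomorphism $\mathrm{Deck}(\pi)\cong\pi_1(M,x_0)/\pi_\#\pi_1(M_0)\to\mathbb R$ — the period morphism of the covering — and $\pi$ is minimal precisely when this homomorphism is injective; equivalently, when $\mathrm{Deck}(\pi)$ embeds in $\mathbb R$ via it, in which case $\mathrm{Deck}(\pi)$ is free abelian of rank the LCS rank.

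For existence, I would let $\pi_0\colon M_0\to M$ be the connected covering associated with the normal subgroup $K$ — it exists, carries a unique smooth structure making $\pi_0$ a local diffeomorphism, and is a normal covering because $K\trianglelefteq\pi_1(M,x_0)$ — and check that it is minimal. By construction $\pi_{0\#}\pi_1(M_0)=K$, so $\pi_0^*\eta$ is exact by the dictionary above, $\mathrm{Deck}(\pi_0)\cong\pi_1(M,x_0)/K$, and the induced period morphism is the inclusion of the image of $\mathrm{Per}_{[\eta]}$ into $\mathbb R$, hence injective; thus $\mathrm{Deck}(\pi_0)$ is free abelian of rank the LCS rank and $\pi_0$ is minimal.

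For uniqueness up to isomorphism, I would take any minimal covering $\pi'\colon M_0'\to M$ and set $H':=\pi'_\#\pi_1(M_0')$. Exactness of $\pi'^*\eta$ forces $H'\subseteq K$ by the dictionary, and minimality forces the period morphism on $\mathrm{Deck}(\pi')\cong\pi_1(M,x_0)/H'$ to be injective, while its kernel is exactly $K/H'$. Hence $H'=K$, so $\pi'$ and $\pi_0$ are connected coverings of $M$ associated with the same (normal) subgroup of $\pi_1(M,x_0)$ and are therefore isomorphic as coverings of $M$. Together with the previous step this gives existence and uniqueness up to isomorphism.

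I expect the proof to be essentially bookkeeping, the one point needing care being the passage ``minimal'' $\leftrightarrow$ ``$\pi_\#\pi_1(M_0)=\ker\int\eta$'': once that equivalence is nailed down, existence is just the classification of covering spaces applied to $K$, and uniqueness is the observation that injectivity of the induced period morphism forces $K/H'$ to be trivial. The mild technical points to be careful with are the exactness criterion for a closed form with vanishing periods on the possibly non-compact covering $M_0$, the normality of $K$, and the handling of basepoints in the covering-space correspondence.
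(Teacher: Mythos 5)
Your proof is correct and follows essentially the same route as the paper: both identify the minimal covering as the normal covering associated with the kernel of the period homomorphism $\int\eta\colon\pi_1(M)\to\mathbb{R}$ and invoke the Galois correspondence for coverings. The only differences are presentational — the paper verifies exactness of $\pi^*\eta$ by descending a primitive from the universal cover (via the morphism $\tau$ attached to the Liouville form), whereas you use the vanishing-of-periods criterion directly, and you spell out the uniqueness step that the paper leaves implicit.
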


\begin{proof}
Let $(M,[(\lambda,\eta)])$ be an exact LCS manifold and $\Tilde{\pi}:\Tilde{M} \to M$ its universal covering.
There is a bijective correspondence between the normal subgroups of $\pi_1(M) \cong \text{Deck}(\Tilde{\pi})$ and the normal coverings of $M$ up to isomorphism. \smallskip

By the Poincaré lemma, since $\Tilde{M}$ is simply connected, the form $\Tilde{\pi}^* \eta$ is exact.
Let $\Tilde{\mu} \in C^\infty(\Tilde{M})$ be a primitive of $\Tilde{\pi}^* \eta$.
By Proposition \ref{lcs -> symplectic}, we have that $\Tilde{\lambda} := e^{-\Tilde{\mu}} \Tilde{\pi}^* \lambda$ is a Liouville form on $\Tilde{M}$ such that $\gamma^* \Tilde{\lambda} = e^{-\tau(\gamma)} \Tilde{\lambda}$ for all $\gamma \in \pi_1(M)$.
The morphism $\tau : \pi_1(M) \to \mathbb{R}$ depends neither on the primitive $\Tilde{\mu}$ of $\Tilde{\pi}^* \eta$ nor on the choice of representative of the conformal class.
\smallskip

Consider the normal subgroup $\ker \tau \subseteq \pi_1(M)$. Since $\Tilde{\mu} \circ \gamma - \Tilde{\mu} = \tau(\gamma)$, the function $\Tilde{\mu}$ descends to the quotient $\overline{M} := \Tilde{M}/\ker \tau$ as a smooth function $\mu \in C^\infty(\overline{M})$.
If we denote by $\pi: \overline{M} \to M$ the normal covering associated to $\ker \tau \subseteq \pi_1(M)$, then we have $\pi^* \eta = d\mu$, which is exact.
Finally, by the first isomorphism theorem, $\tau$ induces an injective morphism between $\pi_1(M)/\ker \tau$ and $\mathbb{R}$.
\end{proof}

\begin{EE}
\label{2.11}
Let $(Q,\tilde\lambda)$ be an exact symplectic manifold of dimension $2n-2$, and $(Q \times \mathbb{S}^1, p_2^*d\theta - p_1^*\tilde\lambda)$ its contactization.
For any $s \in \mathbb{R}$, we define a closed $1$-form $\eta_s$ on $Q \times \mathbb{S}^1 \times \mathbb{S}^1$ by $\eta_s := p_2^*d\theta - s p_3^*d\theta$.
Setting $\overline{\lambda} = p_2^*d\theta - p_1^*\tilde\lambda$, one verifies that
\begin{align*}
    (d_{\eta_s} \overline{\lambda})^n &= (d\overline{\lambda})^n - n \eta_s \wedge \overline{\lambda} \wedge (d\overline{\lambda})^{n-1} \\
    &= - n \eta_s \wedge \overline{\lambda} \wedge (d\overline{\lambda})^{n-1} \\
    &= -n (p_2^* d\theta - s p_3^* d\theta) \wedge (p_2^* d\theta - \tilde\lambda) \wedge d\tilde\lambda^{n-1} \\
    &= ns\, p_3^* d\theta \wedge p_2^* d\theta \wedge d\tilde\lambda^{\,n-1}.
\end{align*}
Hence $(\overline{\lambda},\eta_s)$ defines an exact LCS pair on $Q \times \mathbb{S}^1 \times \mathbb{S}^1$.
If we consider the covering
\[
\pi : Q \times \mathbb{R}^2 \to Q \times \mathbb{S}^1 \times \mathbb{S}^1 : (x,t,s) \mapsto (x,\theta(t),\theta(s))
\footnote{Where $\theta : \mathbb{R} \to \mathbb{S}^1$ is the projection to the quotient of $\mathbb{R}$ by the action of $\mathbb{Z}$ generated by $t \mapsto t+1$.}.
\]
Then $\text{Deck}(\pi) \cong \mathbb{Z}^2$, and $\pi^* \eta_s = p_2^* dt - s p_2^* dt$ is exact.
Thus, if $s$ is irrational, then $(Q \times \mathbb{S}^1 \times \mathbb{S}^1,[(\overline{\lambda},\eta_s)])$ has rank $2$ and $\pi$ is minimal.
Whereas if $s$ is rational, then $(Q \times \mathbb{S}^1 \times \mathbb{S}^1,[(\overline{\lambda},\eta_s)])$ has rank $1$ and $\pi$ is not minimal.
\end{EE}

\begin{LL}
\label{2.19}
An exact conformal symplectomorphism between two exact LCS manifolds satisfies the lifting criterion between their minimal coverings.
\end{LL}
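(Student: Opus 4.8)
The plan is to reduce the statement to the classical lifting criterion of covering space theory, the only genuine input being that an exact conformal symplectomorphism preserves the de Rham class of the Lee form. Let $\varphi$ be the exact conformal symplectomorphism between $(M,[(\lambda,\eta)])$ and $(M',[(\lambda',\eta')])$, and let $\pi : M_0 \to M$ and $\pi' : M_0' \to M'$ denote the minimal coverings. By the construction in the proof of Proposition~\ref{2.10}, $M_0$ is the normal covering associated with the normal subgroup $\ker\tau \leq \pi_1(M)$, so that $\pi_\#(\pi_1(M_0)) = \ker\tau$, and similarly $\pi'_\#(\pi_1(M_0')) = \ker\tau'$, where $\tau$ and $\tau'$ are the morphisms to $\mathbb{R}$ attached to $[\eta]$ and $[\eta']$ via Lemma~\ref{morphisme to lee}. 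By the lifting criterion, $\varphi\circ\pi : M_0 \to M'$ admits a lift $\overline\varphi : M_0 \to M_0'$ through $\pi'$ if and only if $\varphi_\#\big(\pi_\#(\pi_1(M_0))\big) \subseteq \pi'_\#(\pi_1(M_0'))$, i.e. if and only if $\varphi_\#(\ker\tau) \subseteq \ker\tau'$; establishing this inclusion is the whole content of the lemma.

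First I would recall, again from the proof of Proposition~\ref{2.10}, that $\tau$ is the period morphism of the Lee form: a primitive $\tilde\mu$ of $\tilde\pi^*\eta$ on the universal cover satisfies $\tilde\mu\circ\gamma-\tilde\mu = \tau(\gamma)$, so that $\tau(\gamma) = \int_\gamma\eta = \mathrm{Per}_{[\eta]}([\gamma])$ for every loop $\gamma$ in $M$, and likewise $\tau'(\delta) = \int_\delta\eta'$ for every loop $\delta$ in $M'$. Next I would use that $\varphi$ is an exact conformal symplectomorphism: there is $g \in C^\infty(M)$ with $\varphi^*\eta' = \eta + dg$, so $[\varphi^*\eta'] = [\eta]$ in $H^1_{dR}(M)$. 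Combining these two facts, for every loop $\gamma$ in $M$,
\[
\tau'(\varphi_\#[\gamma]) = \int_{\varphi\circ\gamma}\eta' = \int_\gamma \varphi^*\eta' = \int_\gamma \eta = \tau([\gamma]),
\]
so that $\tau' \circ \varphi_\# = \tau$ as morphisms $\pi_1(M) \to \mathbb{R}$.

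From $\tau' \circ \varphi_\# = \tau$ the inclusion $\varphi_\#(\ker\tau) \subseteq \ker\tau'$ is immediate (and since $\varphi_\#$ is an isomorphism of fundamental groups, it is in fact an equality), which is precisely the hypothesis of the lifting criterion; hence $\varphi\circ\pi$ lifts to $\overline\varphi : M_0 \to M_0'$ with $\pi'\circ\overline\varphi = \varphi\circ\pi$. Running the same argument on $\varphi^{-1}$, which is again an exact conformal symplectomorphism, yields a lift of $\varphi^{-1}\circ\pi'$; the two lifts are mutually inverse up to deck transformations, so after composing $\overline\varphi$ with a deck transformation one obtains a diffeomorphism $M_0 \to M_0'$ which, being a lift, automatically permutes the deck actions. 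The only matters needing care are the bookkeeping of basepoints in the lifting criterion — harmless since the minimal coverings are normal, so $\pi_\#(\pi_1(M_0))$ and $\pi'_\#(\pi_1(M_0'))$ are basepoint-independent normal subgroups — and the identification $\pi_\#(\pi_1(M_0)) = \ker\tau$, which is already part of the construction of the minimal covering; I do not expect any real obstacle beyond these routine points.
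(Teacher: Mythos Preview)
Your argument is correct and is essentially the same as the paper's: both reduce to showing $\tau'\circ\varphi_\# = \tau$ from $\varphi^*\eta' = \eta + dg$, and then use that minimality identifies $\pi_\#(\pi_1(M_0))$ with $\ker\tau$ (and likewise for $M_0'$). The paper stops at the inclusion $\varphi_\#(\pi_\#(\pi_1(\overline M)))\subseteq \pi'_\#(\pi_1(\overline M'))$ and its converse, whereas you go on to build the lift and argue it is a diffeomorphism; that extra paragraph is correct but exceeds what the lemma asserts.
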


\begin{proof}
Let $(M,[(\lambda,\eta)])$ and $(M',[(\lambda',\eta')])$ be two exact LCS manifolds, $\varphi : M \to M'$ be a exact conformal symplectomorphism satisfying $\varphi^* \eta' = \eta + dg$ for some function $g \in C^\infty(M)$, and $\pi : \overline{M} \to M$ and $\pi' : \overline{M}' \to M'$ be the minimal coverings respectively associated to $(M,[(\lambda,\eta)])$ and $(M',[(\lambda',\eta')])$.
\smallskip

Since $\varphi^* \eta' = \eta + dg$, we have
\[
\int \eta' \circ \varphi_\# = \int \varphi^* \eta' = \int \eta + dg = \int \eta.
\]
Under the assumption that $\pi$ and $\pi'$ are minimal, we have
\[
\ker \int \eta = \pi_\#(\pi_1(\overline{M})) \ \  \text{and} \ \ \ker \int \eta' = \pi'_\#(\pi_1(\overline{M}')).
\]
Therefore,
\[\varphi_\# (\pi_\# (\pi_1(\overline{M}))) \subseteq \pi'_\# (\pi_1(\overline{M}')) \ \  \text{and} \ \  \varphi^{-1}_\# (\pi'_\# (\pi_1(\overline{M}'))) \subseteq \pi_\# (\pi_1(\overline{M})).
\]
\end{proof}

\subsection{Elasticity}

We define the \textit{elasticity} of an exact LCS pair $(\lambda,\eta)$ on $M$ as \(E(\lambda,\eta) := \{c \in \mathbb{R} \mid d_{c\eta} \lambda \ \text{is nondegenerate}\}.\)
It is not an invariant of the exact LCS structure.
If $g,h \in C^\infty(M)$ and $c \in E(\lambda,\eta)$, then without any additional assumption $d_{c(\eta+dg)} e^g(\lambda+d_\eta h)$ is not necessarily nondegenerate.

\begin{RR}
\label{rk 12}
However, if $\varphi^* \lambda' = e^g(\lambda+d_\eta h)$ and $\varphi^* \eta' = \eta + dg$, then $E(\lambda',\eta') = E(e^g(\lambda+d_\eta h),\eta + dg)$.
\end{RR}

Since nondegeneracy is an open condition, the elasticity is an open subset of $\mathbb{R}$.
Because an exact symplectic manifold cannot be closed, if $M$ is closed then it is an open subset of $\mathbb{R}_0$.

\begin{LL}
\label{etaZnonzero}
Let $(\lambda,\eta)$ be an exact LCS pair on a manifold $M$ of dimension $2n$, and $Z_\lambda$ the $\eta$-Liouville vector field associated with $\lambda$.
The function $\eta(Z_\lambda)$ is nonvanishing if and only if $\eta \wedge \lambda \wedge d\lambda^{\,n-1}$ is nonvanishing.
\end{LL}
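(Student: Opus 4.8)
The plan is to reduce both nonvanishing conditions to the nonvanishing of the Liouville volume form $\omega^n$, where $\omega := d_\eta\lambda$ denotes the associated LCS form, by establishing the pointwise identity
\[
\eta \wedge \lambda \wedge d\lambda^{\,n-1} \;=\; \frac{1}{n}\,\eta(Z_\lambda)\,\omega^n .
\]
Since $\omega$ is nondegenerate, $\omega^n$ is nowhere zero, so this identity shows that $(\eta\wedge\lambda\wedge d\lambda^{\,n-1})(p)=0$ if and only if $\eta(Z_\lambda)(p)=0$, which is exactly the asserted equivalence.

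First I would rewrite $d\lambda = \omega + \eta\wedge\lambda$ and use that $\eta\wedge\lambda$ is decomposable of degree $2$, so $(\eta\wedge\lambda)^2=0$. Expanding gives $d\lambda^{\,n-1} = \omega^{n-1} + (n-1)\,\eta\wedge\lambda\wedge\omega^{n-2}$, and wedging with $\eta\wedge\lambda$ annihilates the second summand because it contains the factor $\eta\wedge\lambda\wedge\eta\wedge\lambda = 0$. Hence $\eta\wedge\lambda\wedge d\lambda^{\,n-1} = \eta\wedge\lambda\wedge\omega^{n-1}$.

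Next I would use the defining relation $\lambda = \iota_{Z_\lambda}\omega$ together with the Leibniz rule $\iota_{Z_\lambda}(\omega^n) = n\,(\iota_{Z_\lambda}\omega)\wedge\omega^{n-1}$ to obtain $\eta\wedge\lambda\wedge\omega^{n-1} = \frac{1}{n}\,\eta\wedge\iota_{Z_\lambda}(\omega^n)$. Finally, since $\dim M = 2n$, the $(2n+1)$-form $\eta\wedge\omega^n$ vanishes identically; contracting it with $Z_\lambda$ and applying the graded Leibniz rule yields $0 = \eta(Z_\lambda)\,\omega^n - \eta\wedge\iota_{Z_\lambda}(\omega^n)$, that is, $\eta\wedge\iota_{Z_\lambda}(\omega^n) = \eta(Z_\lambda)\,\omega^n$. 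Combining the three steps gives the displayed identity.

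There is essentially no real obstacle here: the argument is a short pointwise computation, and the only thing needing care is the bookkeeping of signs and binomial coefficients in the expansion of $d\lambda^{\,n-1}$ and in the two contraction identities. Every term that could cause trouble drops out precisely because $\eta\wedge\lambda$ squares to zero and $\omega^n$ is already of top degree.
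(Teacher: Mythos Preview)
Your argument is correct. The identity $\eta\wedge\lambda\wedge d\lambda^{\,n-1}=\tfrac{1}{n}\,\eta(Z_\lambda)\,\omega^n$ holds as you derive it, and since $\omega^n$ is a volume form the equivalence is immediate.

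The route differs from the paper's. There the author contracts the top form with $Z_\lambda$, obtaining $\iota_{Z_\lambda}(\eta\wedge\lambda\wedge d\lambda^{\,n-1})=\eta(Z_\lambda)\,\lambda\wedge d\lambda^{\,n-1}$, and then argues separately that $\lambda\wedge d\lambda^{\,n-1}=\tfrac{1}{n}\iota_{Z_\lambda}\omega^n$ is nonvanishing wherever $\lambda$ is; the points where $\lambda$ (hence $Z_\lambda$) vanishes are handled as a case apart. Your approach instead proves a single identity between top forms, with the nowhere-vanishing reference form $\omega^n$ on the right, so no case split is needed and the equivalence is read off directly. The paper's version has the mild advantage that the intermediate form $\lambda\wedge d\lambda^{\,n-1}$ reappears in the elasticity computation (Proposition~\ref{image élasic}), but your formula is cleaner for the lemma at hand and in fact encodes the same information one degree higher.
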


\begin{proof}
If $\lambda$ vanishes at a point, then its $\eta$-Liouville vector field $Z_\lambda$ also vanishes at that point, as do $\eta(Z_\lambda)$ and $\eta \wedge \lambda \wedge d\lambda^{n-1}$.
On the other hand, if $Z_\lambda$ is nonvanishing, then since $d_\eta \lambda$ is nondegenerate, we have
\(\lambda \wedge d\lambda^{n-1} = \frac{1}{n} \iota_{Z_\lambda} (d_\eta \lambda^n)\),
which is nonvanishing.
Moreover, since \(\iota_{Z_\lambda} (\eta \wedge \lambda \wedge d\lambda^{n-1}) = \eta(Z_\lambda)\, \lambda \wedge d\lambda^{n-1}\), we obtain that $\eta(Z_\lambda)$ is nonvanishing if and only if $\eta \wedge \lambda \wedge d\lambda^{n-1}$ is nonvanishing.
\end{proof}

\begin{PP}
\label{image élasic}
\label{4.2}
Let $(\lambda,\eta)$ be an exact LCS pair on a manifold $M$, and let $Z_\lambda$ be the $\eta$-Liouville vector field associated with $\lambda$.
Then
\[
E(\lambda,\eta) \subseteq \text{Im}\left(\frac{1+ \eta(Z_\lambda)}{\eta(Z_\lambda)}\right)^{c}.
\]
Moreover, if $\lambda$ is nonvanishing, then the above inclusion is an equality.
\end{PP}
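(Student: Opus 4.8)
The plan is to work pointwise and reduce the nondegeneracy of $d_{c\eta}\lambda$ at a point $p\in M$ to a computation with the $\eta$-Liouville vector field. The key identity is $d_{c\eta}\lambda = d\lambda - c\,\eta\wedge\lambda = d_\eta\lambda + (1-c)\,\eta\wedge\lambda$, which expresses the candidate form as a perturbation of the nondegenerate form $d_\eta\lambda$ by a multiple of $\eta\wedge\lambda$. Since $\iota_{Z_\lambda}d_\eta\lambda = \lambda$, one has $\iota_{Z_\lambda}(\eta\wedge\lambda) = \eta(Z_\lambda)\,\lambda - \lambda(Z_\lambda)\,\eta = \eta(Z_\lambda)\,\lambda$ (as $\lambda(Z_\lambda) = (\iota_{Z_\lambda}d_\eta\lambda)(Z_\lambda) = d_\eta\lambda(Z_\lambda,Z_\lambda) = 0$). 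Hence $\iota_{Z_\lambda}\big(d_{c\eta}\lambda\big) = \lambda + (1-c)\eta(Z_\lambda)\,\lambda = \big(1 + (1-c)\eta(Z_\lambda)\big)\lambda$. So if $d_{c\eta}\lambda$ is nondegenerate at $p$ and $\lambda_p\ne 0$, then necessarily $1+(1-c)\eta(Z_\lambda) \ne 0$ at $p$; and if $\lambda_p = 0$ then $Z_\lambda$ vanishes at $p$, so both $\eta(Z_\lambda)$ and the quantity $\frac{1+\eta(Z_\lambda)}{\eta(Z_\lambda)}$ need separate handling — I return to this below.

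First I would establish the inclusion. Fix $c\in E(\lambda,\eta)$; I must show $c$ is not in the image of $q := \frac{1+\eta(Z_\lambda)}{\eta(Z_\lambda)}$. Suppose for contradiction $c = q(p)$ for some $p$, i.e. $c\,\eta(Z_\lambda)_p = 1 + \eta(Z_\lambda)_p$, equivalently $1 + (1-c)\eta(Z_\lambda)_p = 0$. At such a point $\eta(Z_\lambda)_p \ne 0$ (otherwise the defining equation gives $0 = 1$), so in particular $Z_\lambda$, hence $\lambda$, is nonzero at $p$ by Lemma~\ref{etaZnonzero} and the relation $\iota_{Z_\lambda}d_\eta\lambda = \lambda$. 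But then $\iota_{Z_\lambda}(d_{c\eta}\lambda)_p = \big(1+(1-c)\eta(Z_\lambda)_p\big)\lambda_p = 0$ with $Z_\lambda{}_p \ne 0$, contradicting nondegeneracy of $d_{c\eta}\lambda$ at $p$. This gives $E(\lambda,\eta)\subseteq \mathrm{Im}(q)^c$.

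For the converse under the hypothesis that $\lambda$ is nonvanishing, I would argue that when $\lambda$ never vanishes, $Z_\lambda$ never vanishes either, and the degeneracy locus of $d_{c\eta}\lambda$ is governed entirely by the scalar $1+(1-c)\eta(Z_\lambda)$. Concretely, $d_{c\eta}\lambda$ is nondegenerate at $p$ iff $(d_{c\eta}\lambda)^n_p \ne 0$; expanding $(d_{c\eta}\lambda)^n = (d_\eta\lambda + (1-c)\eta\wedge\lambda)^n = (d_\eta\lambda)^n + n(1-c)\,\eta\wedge\lambda\wedge(d_\eta\lambda)^{n-1}$ (higher powers of $\eta\wedge\lambda$ vanish). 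Using $\lambda\wedge(d_\eta\lambda)^{n-1} = \frac1n\iota_{Z_\lambda}(d_\eta\lambda)^n$ and $\eta\wedge\iota_{Z_\lambda}(d_\eta\lambda)^n = \eta(Z_\lambda)(d_\eta\lambda)^n - \iota_{Z_\lambda}\big(\eta\wedge(d_\eta\lambda)^n\big) = \eta(Z_\lambda)(d_\eta\lambda)^n$ (the last term is an $(2n+1)$-form, hence zero), one gets $(d_{c\eta}\lambda)^n = \big(1 + (1-c)\eta(Z_\lambda)\big)(d_\eta\lambda)^n$. Since $(d_\eta\lambda)^n$ is nowhere zero, $d_{c\eta}\lambda$ is nondegenerate at $p$ iff $1+(1-c)\eta(Z_\lambda)_p \ne 0$, i.e. iff $c \ne q(p)$ whenever $\eta(Z_\lambda)_p\ne 0$ (and automatically $c\ne q(p)$ when $\eta(Z_\lambda)_p = 0$ since then $q(p)$ is undefined / the condition $1 + (1-c)\cdot 0 = 1 \ne 0$ always holds). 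Thus $c\in E(\lambda,\eta)$ iff $c\notin \mathrm{Im}(q)$, giving equality.

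The main obstacle I anticipate is the bookkeeping at points where $\eta(Z_\lambda)$ vanishes (so that $q$ is not defined there): one must be careful that the statement "$c\notin\mathrm{Im}(q)$" correctly encodes the nondegeneracy condition at those points too. The clean way around this is precisely the wedge-power computation above, which shows $(d_{c\eta}\lambda)^n = \big(1+(1-c)\eta(Z_\lambda)\big)(d_\eta\lambda)^n$ globally once $\lambda$ (equivalently $Z_\lambda$) is nonvanishing, so nondegeneracy of $d_{c\eta}\lambda$ at $p$ is literally equivalent to $1+(1-c)\eta(Z_\lambda)_p\ne 0$, and this is in turn equivalent to $c\ne q(p)$ on the locus $\{\eta(Z_\lambda)\ne 0\}$ and vacuously true elsewhere. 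Without the nonvanishing hypothesis, the identity $\lambda\wedge(d_\eta\lambda)^{n-1} = \frac1n\iota_{Z_\lambda}(d_\eta\lambda)^n$ still holds but $d_{c\eta}\lambda$ can degenerate on the zero set of $\lambda$ for reasons invisible to $q$, which is exactly why only an inclusion survives in general.
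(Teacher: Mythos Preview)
Your proof is correct and rests on the same core reduction as the paper's: nondegeneracy of $d_{c\eta}\lambda$ is governed by the scalar $1+(1-c)\,\eta(Z_\lambda)$. The paper computes $\iota_{Z_\lambda}(d_{c\eta}\lambda)^n = n\big(1+(1-c)\,\eta(Z_\lambda)\big)\,\lambda\wedge d\lambda^{n-1}$ and then compares against the reference form $\lambda\wedge d\lambda^{n-1}$, which forces a case split over the open sets $\mathcal{O}_{\eta(Z_\lambda)}\subseteq\mathcal{O}_\lambda$ where that reference form is known to be nonzero. Your second computation is more direct: you expand $(d_{c\eta}\lambda)^n$ against the reference form $(d_\eta\lambda)^n$, obtaining $(d_{c\eta}\lambda)^n = \big(1+(1-c)\,\eta(Z_\lambda)\big)(d_\eta\lambda)^n$, and this reference form is nowhere zero by the very definition of an exact LCS pair.

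It is worth observing that your identity $(d_{c\eta}\lambda)^n = \big(1+(1-c)\,\eta(Z_\lambda)\big)(d_\eta\lambda)^n$ holds \emph{globally} with no hypothesis on $\lambda$: every step you wrote (the binomial expansion, the identity $n\,\lambda\wedge(d_\eta\lambda)^{n-1} = \iota_{Z_\lambda}(d_\eta\lambda)^n$, and the vanishing of $\eta\wedge(d_\eta\lambda)^n$ as a $(2n{+}1)$-form) is purely algebraic. Consequently your argument in fact yields the equality $E(\lambda,\eta) = \mathrm{Im}(q)^c$ unconditionally. Your final paragraph's explanation of why only an inclusion should survive in general is therefore mistaken, though it does no damage to the proof of the stated proposition: at a zero of $\lambda$ one has $Z_\lambda = 0$, hence $\eta(Z_\lambda) = 0$, and your identity gives $(d_{c\eta}\lambda)^n = (d_\eta\lambda)^n \neq 0$ there.
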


\begin{proof}
Set
$$\mathcal{O}_{\eta(Z_\lambda)} := \{x \in M \mid \eta(Z_\lambda)_x \neq 0\} \subseteq \mathcal{O}_\lambda := \{x \in M \mid \lambda_x \neq 0\}.$$
These are two open subsets of $M$.
On $\mathcal{O}_\lambda$, the vector field $Z_\lambda$ is nonvanishing, so $\lambda \wedge d\lambda^{n-1} = \frac{1}{n}\iota_{Z_\lambda} (d_\eta \lambda)^n$ is also nonvanishing.
For any $c \in \mathbb{R}$, we then have
\begin{align*}
    \iota_{Z_\lambda} (d_{c\eta}\lambda)^n &= \iota_{Z_\lambda} ( d\lambda^n - c n \, \eta \wedge \lambda \wedge d\lambda^{n-1}) \\
    &= n (\iota_{Z_\lambda} d\lambda)\wedge d\lambda^{n-1} - cn\, \eta(Z_\lambda)\, \lambda \wedge d\lambda^{n-1} \\
    &= n (1+\eta(Z_\lambda))\, \lambda \wedge d\lambda^{n-1} - cn\, \eta(Z_\lambda)\, \lambda \wedge d\lambda^{n-1} \\
    &= n \big(1 + (1-c)\eta(Z_\lambda)\big)\, \lambda \wedge d\lambda^{n-1}.
\end{align*}
On $\mathcal{O}_{\eta(Z_\lambda)}$, the function \(1 + (1-c)\eta(Z_\lambda)\) vanishes if and only if 
\(c \in \text{Im}\Big(\frac{1+\eta(Z_\lambda)}{\eta(Z_\lambda)}\Big)\).
Hence,
\[
E(\lambda|_{\mathcal{O}_{\eta(Z_\lambda)}},\eta|_{\mathcal{O}_{\eta(Z_\lambda)}})
= \text{Im}\left(\frac{1+\eta(Z_\lambda)}{\eta(Z_\lambda)}\right)^c.
\]
Whereas on $\mathcal{O}_\lambda \setminus \mathcal{O}_{\eta(Z_\lambda)}$, we have 
\(\iota_{Z_\lambda} (d_{c\eta}\lambda)^n = n\, \lambda \wedge d\lambda^{n-1}\),
which is nonvanishing independently of $c\in \mathbb{R}$, and therefore
\(
E(\lambda|_{\mathcal{O}_\lambda \setminus \mathcal{O}_{\eta(Z_\lambda)}},\eta|_{\mathcal{O}_\lambda \setminus \mathcal{O}_{\eta(Z_\lambda)}})
= \mathbb{R}.
\)
Thus,
\[
E(\lambda,\eta) \subseteq E(\lambda|_{\mathcal{O}_\lambda},\eta|_{\mathcal{O}_\lambda}) = E(\lambda|_{\mathcal{O}_{\eta(Z_\lambda)}},\eta|_{\mathcal{O}_{\eta(Z_\lambda)}}) \cap E(\lambda|_{\mathcal{O}_\lambda \setminus \mathcal{O}_{\eta(Z_\lambda)}},\eta|_{\mathcal{O}_\lambda \setminus \mathcal{O}_{\eta(Z_\lambda)}})= \text{Im}\left(\frac{1+\eta(Z_\lambda)}{\eta(Z_\lambda)}\right)^c.\]
\end{proof}

\begin{DD}
\label{4.4}
An exact LCS pair $(\lambda,\eta)$ on a manifold $M$ of dimension $2n$ is said to be of the first kind if $d\lambda^n = 0$ and $\eta \wedge \lambda \wedge d\lambda^{n-1}$ is nonvanishing.
An exact LCS manifold $(M,[(\lambda,\eta)])$ is said to be \textit{of the first kind} if it admits a representative of the exact LCS structure of the first kind.
\end{DD}

\begin{PP}
\label{equivelence first kind}
Let $(\lambda,\eta)$ be an exact LCS pair on a manifold $M$. The pair $(\lambda,\eta)$ is of the first kind if and only if $E(\lambda,\eta) = \mathbb{R}_0$.
\end{PP}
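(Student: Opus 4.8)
The plan is to reduce both implications to the behaviour of the top-degree form $(d_{c\eta}\lambda)^{n}$. Since $\eta\wedge\lambda$ is a $2$-form with $(\eta\wedge\lambda)^{2}=0$, the binomial expansion collapses to
\[
(d_{c\eta}\lambda)^{n}=(d\lambda-c\,\eta\wedge\lambda)^{n}=d\lambda^{\,n}-c\,n\,\eta\wedge\lambda\wedge d\lambda^{\,n-1},
\]
so that $c\in E(\lambda,\eta)$ precisely when $d\lambda^{\,n}-c\,n\,\eta\wedge\lambda\wedge d\lambda^{\,n-1}$ is nowhere vanishing. If $(\lambda,\eta)$ is of the first kind, then $d\lambda^{\,n}=0$ and $\eta\wedge\lambda\wedge d\lambda^{\,n-1}$ is nonvanishing, so the displayed identity becomes $(d_{c\eta}\lambda)^{n}=-c\,n\,\eta\wedge\lambda\wedge d\lambda^{\,n-1}$, which is nonvanishing exactly for $c\neq 0$; hence $E(\lambda,\eta)=\mathbb{R}_0$. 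This settles one direction with no further work.

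For the converse I would argue through the $\eta$-Liouville vector field $Z_\lambda$. Assuming $E(\lambda,\eta)=\mathbb{R}_0$, Proposition~\ref{4.2} gives $\mathbb{R}_0\subseteq\text{Im}\!\left(\frac{1+\eta(Z_\lambda)}{\eta(Z_\lambda)}\right)^{c}$, i.e.\ $\text{Im}\!\left(\frac{1+\eta(Z_\lambda)}{\eta(Z_\lambda)}\right)\subseteq\{0\}$, which says exactly that $\eta(Z_\lambda)=-1$ at every point where $\eta(Z_\lambda)\neq 0$. Thus on the open set $\mathcal{O}:=\{\eta(Z_\lambda)\neq 0\}$ the function $\eta(Z_\lambda)$ is constant equal to $-1$, so the closure of $\mathcal{O}$ is contained in $\{\eta(Z_\lambda)=-1\}\subseteq\mathcal{O}$; hence $\mathcal{O}$ is clopen. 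It cannot be empty: if $\mathcal{O}=\emptyset$, then $\eta(Z_\lambda)\equiv 0$, hence $\eta\wedge\lambda\wedge d\lambda^{\,n-1}\equiv 0$ (by the pointwise argument in the proof of Lemma~\ref{etaZnonzero}), so $(d_\eta\lambda)^{n}=d\lambda^{\,n}$ would be nonvanishing by nondegeneracy of $d_\eta\lambda$, forcing $0\in E(\lambda,\eta)$ — contrary to $E(\lambda,\eta)=\mathbb{R}_0$. As $M$ is connected, the nonempty clopen set $\mathcal{O}$ is all of $M$, i.e.\ $\eta(Z_\lambda)\equiv -1$.

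It then remains to read off the two conditions of Definition~\ref{4.4}. Since $\iota_{Z_\lambda}\lambda=\iota_{Z_\lambda}\iota_{Z_\lambda}d_\eta\lambda=0$, the relation $\iota_{Z_\lambda}d_\eta\lambda=\lambda$ rewrites as $\iota_{Z_\lambda}d\lambda=(1+\eta(Z_\lambda))\lambda$, which under $\eta(Z_\lambda)\equiv -1$ becomes $\iota_{Z_\lambda}d\lambda=0$. As $\eta(Z_\lambda)=-1$ forces $Z_\lambda$ to be nowhere vanishing, $d\lambda$ has a nontrivial kernel at every point, hence $d\lambda^{\,n}=0$. Finally $\eta(Z_\lambda)=-1$ is nonvanishing, so Lemma~\ref{etaZnonzero} yields that $\eta\wedge\lambda\wedge d\lambda^{\,n-1}$ is nonvanishing; therefore $(\lambda,\eta)$ is of the first kind.

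I expect the only delicate points to lie in the converse: correctly unwinding Proposition~\ref{4.2}, the clopen/connectedness step promoting ``$\eta(Z_\lambda)=-1$ where it is nonzero'' to ``$\eta(Z_\lambda)\equiv -1$'' (which uses, as is implicit throughout the paper, that $M$ is connected), and the handling of the degenerate case $\eta(Z_\lambda)\equiv 0$; the sign bookkeeping in the expansion of $(d_{c\eta}\lambda)^{n}$ must be done carefully but is otherwise routine.
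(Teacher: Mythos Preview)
Your proof is correct and follows the same approach as the paper: the forward direction via the binomial expansion of $(d_{c\eta}\lambda)^n$, and the converse via Proposition~\ref{4.2} to obtain $\eta(Z_\lambda)\equiv -1$, then $\iota_{Z_\lambda}d\lambda=0$ and Lemma~\ref{etaZnonzero}. Your argument is in fact more careful than the paper's, which simply asserts $\eta(Z_\lambda)=-1$ directly from Proposition~\ref{4.2}; your clopen/connectedness step and the exclusion of the case $\eta(Z_\lambda)\equiv 0$ fill a genuine gap in that deduction.
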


\begin{proof}
If $d\lambda^n = 0$ and $\eta \wedge \lambda \wedge d\lambda^{n-1}$ is nonvanishing,
then for every $c \in \mathbb{R}_0$ we have
\(d_{c\eta}\lambda^n = c n\, \eta \wedge \lambda \wedge d\lambda^{n-1}\),
which is nonvanishing.
Hence $E(\lambda,\eta) = \mathbb{R}_0$.
\smallskip

Conversely, assume $E(\lambda,\eta) = \mathbb{R}_0$.
By Proposition \ref{image élasic}, we have $\eta(Z_\lambda) = -1$.
Thus $\iota_{Z_\lambda} d\lambda = (1+\eta(Z_\lambda)) \lambda = 0$, which implies that $d\lambda^n = 0$.
Moreover, since $\eta(Z_\lambda) \neq 0$, by Lemma \ref{etaZnonzero} we obtain that
\(\eta \wedge \lambda \wedge d\lambda^{n-1}\) is nonvanishing, and therefore $(\lambda,\eta)$ is of the first kind.
\end{proof}

\begin{LL}
\label{stristri}
Let $(N,\xi)$ be a contact manifold.
A contactomorphism $\psi \in \text{Cont}(N,\xi)$ is strict if and only if for every contact form $\alpha$ associated with $\xi$ satisfying $\psi^* \alpha = e^h \alpha$, with $h \in C^\infty(N)$,
there exists a function $f \in C^\infty(N)$ such that $h = f - f \circ \psi$.
\end{LL}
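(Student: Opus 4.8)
The plan is to reduce both implications to the transformation law for the conformal factor under a change of contact form, which is the computation already recorded in the proof of Lemma~\ref{castafior}: if $\psi^*\alpha = e^h\alpha$, then for every $f \in C^\infty(N)$ one has $\psi^*(e^f\alpha) = e^{\,h + f\circ\psi - f}(e^f\alpha)$. I will combine this with the elementary fact that any contact form associated with $\xi$ is of the form $e^f\alpha$ for some $f \in C^\infty(N)$, since two nowhere-vanishing $1$-forms with the same co-oriented kernel differ by a positive function.

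First, for the forward implication, suppose $\psi$ is strict and fix a contact form $\alpha_0$ with $\psi^*\alpha_0 = \alpha_0$. Given an arbitrary contact form $\alpha$ associated with $\xi$, I would write $\alpha = e^g\alpha_0$ for some $g \in C^\infty(N)$ and compute $\psi^*\alpha = e^{g\circ\psi}\psi^*\alpha_0 = e^{\,g\circ\psi - g}\alpha$; hence the conformal factor of $\psi$ relative to $\alpha$ is $h = g\circ\psi - g$, and $f := -g$ satisfies $h = f - f\circ\psi$.

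For the converse, I would apply the hypothesis to a single, arbitrarily chosen contact form $\alpha$ associated with $\xi$: it provides $f \in C^\infty(N)$ with $h = f - f\circ\psi$, where $\psi^*\alpha = e^h\alpha$. Setting $\beta := e^f\alpha$, which is again a contact form associated with $\xi$, the transformation law gives $\psi^*\beta = e^{\,h + f\circ\psi - f}\beta = e^0\beta = \beta$, so $\psi$ is strict.

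There is no real obstacle here: the argument is a one-line manipulation in each direction once the cocycle relation $h_\beta = h_\alpha + f\circ\psi - f$ for the conformal factors is in hand. The only thing to be careful about is bookkeeping the signs in that relation, together with the mild convention that ``contact form associated with $\xi$'' means a positive rescaling, so that a change of contact form is genuinely by a factor $e^f$ with $f \in C^\infty(N)$.
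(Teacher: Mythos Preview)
Your proof is correct. The paper actually states this lemma without proof, so there is no ``paper's own proof'' to compare against; your argument is precisely the intended one-line verification in each direction, and it correctly leverages the transformation law $\psi^*(e^f\alpha)=e^{\,h+f\circ\psi-f}(e^f\alpha)$ already recorded in the proof of Lemma~\ref{castafior}.
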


\begin{PP}
\label{4.6}
Let $(N,\xi)$ be a contact manifold, $\alpha$ a contact form associated with $\xi$, let $\psi \in \text{Cont}(N,\xi)$ be a contactomorphism satisfying $\psi^* \alpha = e^h \alpha$, with $h \in C^\infty(N)$, and $k \in \mathcal{A}_{(\alpha,\psi)}$.
Then $\psi$ is strict if and only if there exists an exact LCS pair $(\lambda,\eta)$ of the first kind, defined by $\pi^* \lambda = e^{t+\mu} p_1^* \alpha$ and $\pi^* \eta = d\mu$, with $\mu \in C^\infty(N \times \mathbb{R})$ satisfying $\rho_{(\psi,k-h)}^* \mu = \mu - k$.
Here, $\pi : N \times \mathbb{R} \to N_{(\alpha,\psi,k)}$ is the usual quotient projection.
\end{PP}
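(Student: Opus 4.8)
The plan is to push everything up to the covering $\pi:N\times\mathbb R\to N_{(\alpha,\psi,k)}$ and to use that, by Definition~\ref{4.4} (equivalently Proposition~\ref{equivelence first kind}), the pair $(\lambda,\eta)$ is of the first kind exactly when $d\lambda^{n}=0$ and $\eta\wedge\lambda\wedge d\lambda^{n-1}$ is nonvanishing. Since $\pi$ is a local diffeomorphism, both conditions can be checked after pulling back, where $\pi^*\lambda=e^{t+\mu}p_1^*\alpha$ and $\pi^*\eta=d\mu$, and since $\dim N=2n-1$, degree arguments on the fibre will annihilate most terms. The crux is the computation: writing $\nu:=t+\mu$, one expands
\[
\bigl(d(e^{\nu}p_1^*\alpha)\bigr)^{n}
= e^{n\nu}\bigl(d\nu\wedge p_1^*\alpha+p_1^*d\alpha\bigr)^{n}
= n\,e^{n\nu}\,d\nu\wedge p_1^*\bigl(\alpha\wedge d\alpha^{\,n-1}\bigr),
\]
using $(d\nu\wedge p_1^*\alpha)^{2}=0$ and $d\alpha^{\,n}=0$ on $N$. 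Because $\alpha\wedge d\alpha^{\,n-1}$ is the contact volume form of $N$, this $2n$-form on $N\times\mathbb R$ vanishes identically if and only if $\partial_t\nu\equiv 0$, that is, if and only if $\mu=f\circ p_1-t$ for some $f\in C^\infty(N)$. Thus the condition $d\lambda^{n}=0$ is \emph{equivalent} to $\mu$ having this special form.

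For ``first kind $\Rightarrow$ strict'', I would take the hypothesised $\mu$, conclude $\mu=f\circ p_1-t$ from $d\lambda^{n}=0$, and substitute into the defining equivariance $\rho_{(\psi,k-h)}^*\mu=\mu-k$. Evaluating at $(x,t)$ gives $f\circ\psi(x)+h(x)=f(x)$, i.e.\ $h=f-f\circ\psi$, whence $\psi^*(e^{f}\alpha)=e^{f\circ\psi+h}\alpha=e^{f}\alpha$: the contact form $e^{f}\alpha$ is $\psi$-invariant, so $\psi$ is strict. This is precisely the criterion of Lemma~\ref{stristri}.

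For ``strict $\Rightarrow$ first kind'', I would use Lemma~\ref{stristri} to obtain $f\in C^\infty(N)$ with $h=f-f\circ\psi$, set $\mu:=f\circ p_1-t$, and verify $\rho_{(\psi,k-h)}^*\mu=\mu-k$ (the same one-line computation run backwards). Since $\rho_{(\psi,k-h)}^*(e^{t}p_1^*\alpha)=e^{k}e^{t}p_1^*\alpha$, the forms $e^{t+\mu}p_1^*\alpha$ and $d\mu$ are $\rho_{(\psi,k-h)}$-invariant and descend to a pair $(\lambda,\eta)$ on $N_{(\alpha,\psi,k)}$, which is an exact LCS pair by the construction of Definition~\ref{construction tore d'application l.c.s.}. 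Writing $\beta:=e^{f}\alpha$, strictness gives $\psi^*\beta=\beta$ and $\pi^*\lambda=p_1^*\beta$, $\pi^*\eta=p_1^*df-dt$; then $\pi^*(d\lambda^{n})=p_1^*(d\beta^{\,n})=0$ and $\pi^*(\eta\wedge\lambda\wedge d\lambda^{n-1})=-\,dt\wedge p_1^*(\beta\wedge d\beta^{\,n-1})$, the $p_1^*df$-term dropping out by degree on $N$, and the latter is nonvanishing because $\beta\wedge d\beta^{\,n-1}$ is a volume form on $N$. Hence $(\lambda,\eta)$ is of the first kind.

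The only genuine computation is the exterior-algebra expansion of $\bigl(d(e^{\nu}p_1^*\alpha)\bigr)^{n}$ and the accompanying bookkeeping of which wedge products vanish by degree on the $(2n-1)$-dimensional fibre; once that yields ``$d\lambda^{n}=0\iff\mu=f\circ p_1-t$'', the remainder is the routine dictionary between this special form of $\mu$, the equivariance relation $\rho_{(\psi,k-h)}^*\mu=\mu-k$, and the strictness criterion of Lemma~\ref{stristri}. I do not anticipate a real obstacle; the only care required is to confirm that the special $\mu$ descends (via $\rho$-invariance of $e^{t+\mu}p_1^*\alpha$ and $d\mu$) so that the pair produced is the one named in the statement.
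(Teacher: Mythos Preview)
Your proposal is correct and follows the same overall strategy as the paper: in both directions the key observation is that $(\lambda,\eta)$ is of the first kind precisely when $\partial_t\mu=-1$, i.e.\ when $\mu=f\circ p_1-t$ for some $f\in C^\infty(N)$, after which the equivariance relation $\rho_{(\psi,k-h)}^*\mu=\mu-k$ translates verbatim into $h=f-f\circ\psi$ via Lemma~\ref{stristri}.

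The only difference is in how this key equivalence is established. The paper invokes the machinery of \S2: since the Liouville vector field of $e^{t}p_1^*\alpha$ is $\partial_t$, Proposition~\ref{relever de liouville} gives $\eta(Z_\lambda)\circ\pi=\partial_t\mu$, and then Propositions~\ref{4.2} and~\ref{equivelence first kind} say that first kind is equivalent to $\eta(Z_\lambda)=-1$. You instead compute $\pi^*(d\lambda^{n})$ and $\pi^*(\eta\wedge\lambda\wedge d\lambda^{n-1})$ directly on $N\times\mathbb R$ by expanding $\bigl(d(e^{\nu}p_1^*\alpha)\bigr)^{n}$ and using degree considerations on the $(2n-1)$-dimensional fibre. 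Your route is more self-contained and avoids the elasticity/Liouville-vector-field apparatus; the paper's route is shorter since it simply quotes earlier results. Both are clean, and your computation is correct as written.
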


\begin{proof}
If $\psi$ is strict, then by Lemma \ref{stristri}, there exists a function $f \in C^\infty(N)$ such that $h = f - f \circ \psi.$
Set $\mu = f \circ p_1 - t$, and check that for every $(x,t) \in N \times \mathbb{R}$,
\begin{align*}
    \mu \circ \rho_{(\psi,k-h)}(x,t) &= \mu(\psi(x),t+k-f(x)+f\circ \psi(x))\\
    &= f\circ \psi(x) - t - k + f(x) - f\circ \psi(x) \\
    &= \mu - k.
\end{align*}
By Lemma \ref{chapeaux pointu}, the forms $\pi^* \lambda = e^{t+\mu} p_1^* \alpha$ and $\pi^* \eta = d\mu$ define an exact LCS pair $(\lambda,\eta)$ on $N_{(\alpha,\psi,k)}$.
Since $\partial_t \mu = -1$, by Propositions \ref{4.2} and \ref{equivelence first kind}, the pair $(\lambda,\eta)$ is of the first kind.
\smallskip

Conversely, suppose there exists an exact LCS pair of the first kind $(\lambda,\eta)$ defined by $\pi^* \lambda = e^{t+\mu} p_1^* \alpha$ and $\pi^* \eta = d\mu$, with $\mu \in C^\infty(N\times \mathbb{R})$ satisfying $\rho_{(\psi,k-h)}^* \mu = \mu - k$.
Then $\eta(Z_\lambda) = -1$, which implies that $\partial_t \mu = -1$.
Set $\mu_0(x) := \mu(x,0)$; then $\mu(x,t) = \mu_0(x) - t$.
Moreover, for every $(x,t) \in N \times \mathbb{R}$, we have
\begin{align*}
    \mu \circ \rho_{(\psi,k-h)}(x,t) &= \mu_0 \circ \psi(x) - t - k + h(x) \\
    &= \mu_0(x) - t - k.
\end{align*}
Therefore $h = \mu_0 - \mu_0 \circ \psi$, and
\(\psi^* (e^{\mu_0} \alpha) = e^{\mu_0 \circ \psi} e^{\mu_0 - \mu_0 \circ \psi} \alpha = e^{\mu_0} \alpha.\)
\end{proof}

\section{Main results}
\subsection{Characterization of LCS mapping tori}

In what follows, all manifolds are assumed to be connected.
If, in addition, we assume that the closed $1$-form $\eta$ is nonvanishing, then its kernel $\ker \eta$ defines an involutive distribution inducing a codimension-$1$ foliation $\mathcal{F}_\eta$ on $M$.

\begin{LL}[Bazzoni - Marrero]
\label{1.5}
Let $\eta$ be a nonvanishing closed $1$-form on a smooth manifold $M$, and $U \in \mathfrak{X}(M)$ a complete vector field such that $\eta(U) = 1$.
For every leaf $N \in \mathcal{F}_\eta$, the restriction of the flow of $U$ to $N \times \mathbb{R}$ is a surjective local diffeomorphism onto $M$.
\end{LL}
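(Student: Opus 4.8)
The statement asserts that for a nonvanishing closed $1$-form $\eta$ with a complete vector field $U$ satisfying $\eta(U)=1$, and any leaf $N$ of $\mathcal{F}_\eta$, the map
\[
\Phi : N \times \mathbb{R} \to M : (x,t) \mapsto \varphi_U^t(x)
\]
is a surjective local diffeomorphism, where $\varphi_U^t$ denotes the (globally defined) time-$t$ flow of $U$. I would organise the argument in three steps: first that $\Phi$ is a local diffeomorphism, then that its image is open and closed, hence all of $M$ by connectedness.

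**Step 1: $\Phi$ is a local diffeomorphism.** Fix $(x,t) \in N \times \mathbb{R}$. Since $\varphi_U^t$ is a diffeomorphism of $M$, it suffices to treat $t=0$, i.e.\ show that $\Phi$ restricted near $\{t=0\}$ is a local diffeomorphism, and then compose with $\varphi_U^t$. At a point $(x,0)$, the differential $d\Phi$ sends $T_xN \oplus \mathbb{R}\partial_t$ to $T_xM$ by $(v,s) \mapsto v + sU(x)$. Because $N$ is a leaf of $\mathcal{F}_\eta = \ker\eta$, we have $T_xN = \ker\eta_x$, while $\eta(U(x)) = 1 \neq 0$, so $U(x) \notin \ker\eta_x$; hence $T_xN \oplus \mathbb{R}U(x) = T_xM$ and $d\Phi_{(x,0)}$ is an isomorphism. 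Thus $\Phi$ is a local diffeomorphism near $\{t=0\}$, and composing with the diffeomorphism $\varphi_U^t$ gives that $d\Phi_{(x,t)}$ is an isomorphism for all $t$. By the inverse function theorem, $\Phi$ is a local diffeomorphism everywhere.

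**Step 2: $\mathrm{Im}\,\Phi$ is open.** This is immediate from Step 1, since a local diffeomorphism is an open map.

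**Step 3: $\mathrm{Im}\,\Phi$ is closed.** Let $y \in \overline{\mathrm{Im}\,\Phi}$. Choose a point $p \in M$ and, using that the flow of $U$ is a local diffeomorphism transverse to the foliation, a flow-box neighbourhood of $y$ of the form $V \cong N_y \times (-\varepsilon,\varepsilon)$ where $N_y$ is a piece of the leaf through $y$ and the second coordinate is the flow parameter of $U$; concretely $V = \{\varphi_U^s(z) : z \in N_y,\ |s|<\varepsilon\}$ with $N_y$ an open neighbourhood of $y$ inside its leaf. Since $y$ is in the closure of $\mathrm{Im}\,\Phi$, there is some $\varphi_U^{t_0}(x_0) \in V$ with $x_0 \in N$; write $\varphi_U^{t_0}(x_0) = \varphi_U^{s_0}(z_0)$ with $z_0 \in N_y$, $|s_0| < \varepsilon$. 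Then $z_0 = \varphi_U^{t_0 - s_0}(x_0) \in \mathrm{Im}\,\Phi$. Now the entire slice $N_y$ lies in a single leaf, and that leaf is reached from $N$ by the flow of $U$: indeed $z_0$ and $y$ lie in the same leaf $L$, and the flow of $U$ maps leaves to leaves (because $\mathcal{L}_U\eta = d(\eta(U)) = d(1) = 0$, so the flow of $U$ preserves $\eta$ and hence permutes the leaves of $\mathcal F_\eta$). Since $z_0 = \varphi_U^{r}(x_0)$ with $r = t_0 - s_0$ and $x_0 \in N$, we get $L = \varphi_U^{r}(N)$, so $y \in L = \varphi_U^r(N)$, i.e.\ $\varphi_U^{-r}(y) \in N$ and $y = \Phi(\varphi_U^{-r}(y), r) \in \mathrm{Im}\,\Phi$. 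Hence $\mathrm{Im}\,\Phi$ is closed.

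**Conclusion.** Since $M$ is connected and $\mathrm{Im}\,\Phi$ is nonempty, open, and closed, we conclude $\mathrm{Im}\,\Phi = M$, so $\Phi$ is a surjective local diffeomorphism.

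**Expected main obstacle.** The delicate point is Step 3: one must argue that a point near the closure of the image lies in a leaf that is \emph{globally} reachable from $N$ by the flow of $U$, not merely locally. The key input making this work is that $\eta(U) = 1$ forces $\mathcal{L}_U\eta = 0$, so the (complete) flow of $U$ permutes the leaves of $\mathcal{F}_\eta$ and translates the "$\eta$-coordinate" by the flow time; combined with the flow-box picture for $U$, this pins down $y$ as $\varphi_U^r$ applied to a point of $N$ for an \emph{explicit} $r$. Completeness of $U$ is essential here — without it the relevant $\varphi_U^r$ might not be defined — and it is exactly what lets the local flow-box matching propagate to a global statement.
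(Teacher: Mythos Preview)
Your proof is correct and follows essentially the same strategy as the paper: establish that $\Phi$ is a local diffeomorphism via the splitting $T_xM=\ker\eta_x\oplus\langle U_x\rangle$, then use connectedness of $M$ together with the fact that the flow of $U$ permutes the leaves of $\mathcal{F}_\eta$ (since $\mathcal{L}_U\eta=0$) to obtain surjectivity. The only cosmetic difference is in the surjectivity step: you show $\mathrm{Im}\,\Phi$ is closed via a flow-box/limit argument, whereas the paper shows the complement is open by observing that for any leaf $L'$ outside the image, the set $\varphi_U(L'\times\mathbb{R})$ is open and still lies in the complement --- a slightly slicker symmetric reuse of the local-diffeomorphism step.
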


\begin{proof}
For every $x \in N$ and every $Y_x \in T_xN$, we have
\[\varphi_U{}_{*(x,0)} \partial_t{}_{(x,0)} = U_x 
\ \  \text{and} \ \ 
\varphi_U{}_{*(x,0)} Y_{(x,0)} = Y_x.
\]
Thus, for all $c,s \in \mathbb{R}$, we have
\begin{align*}
    \varphi_U{}_{*(x,s)} (Y + c \partial_t)_{(x,s)}
    &= (\varphi_U \circ \varphi_{\partial_t}^s)_{*(x,0)} (Y + c \partial_t)_{(x,0)} \\
    &= (\varphi_U^s \circ \varphi_U)_{*(x,0)} (Y + c \partial_t)_{(x,0)} \\
    &= {\varphi_U^s}_{*x} (Y_x + c U_x) \\
    &= {\varphi_U^s}_{*x} (Y_x) + c\, U_{\varphi_U^s(x)}.
\end{align*}
Since $\varphi_U^s{}_{*x}$ is a linear isomorphism, the flow of $U$ preserves the distribution $\ker \eta$, and
\(T_{\varphi_U^s(x)} M = \ker \eta_{\varphi_U^s(x)} \oplus \langle U_{\varphi_U^s(x)} \rangle\),
we conclude that $\varphi_U|_{N \times \mathbb{R}}$ is a local diffeomorphism.
\smallskip

Since $\varphi_U|_{N \times \mathbb{R}}$ is a local diffeomorphism, the set $\varphi_U(N \times \mathbb{R})$ is open in $M$.
Assume for contradiction that $M \setminus \varphi_U(N \times \mathbb{R})$ is nonempty.
Because the flow of $U$ preserves the leaves of $\mathcal{F}_\eta$, for any $x \in M \setminus \varphi_U(N \times \mathbb{R})$, the leaf $L'$ passing through $x$ is also contained in $M \setminus \varphi_U(N \times \mathbb{R})$.
By the same argument, $\varphi_U(N' \times \mathbb{R}) \subset M \setminus \varphi_U(N \times \mathbb{R})$, and this is an open subset of $M$.
This contradicts the fact that $M$ is connected.
Hence $M \setminus \varphi_U(N \times \mathbb{R})$ is empty.
\end{proof}

\begin{LL}[Candel - Conlon]
\label{1.13}
Let $\eta$ be a nonvanishing closed $1$-form on a smooth manifold $M$, and $U \in \mathfrak{X}(M)$ a complete vector field such that $\eta(U) = 1$.
For every leaf $N \in \mathcal{F}_\eta$, the image of the period morphism $\mathrm{Per}_{[\eta]}$ coincides with the set of all $t \in \mathbb{R}$ such that $\varphi_U^t(N) = N$.
\end{LL}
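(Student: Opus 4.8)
The plan is to prove the two inclusions separately after observing that both sides are subgroups of $(\mathbb{R},+)$. Write $P:=\mathrm{Im}(\mathrm{Per}_{[\eta]})$ and $G:=\{t\in\mathbb{R}\mid \varphi_U^t(N)=N\}$. Completeness of $U$ makes $\varphi_U^t$ a global diffeomorphism for every $t$, and the group law $\varphi_U^{s}\circ\varphi_U^{t}=\varphi_U^{s+t}$ then shows $G$ is a subgroup; $P$ is a subgroup because $\mathrm{Per}_{[\eta]}$ is a homomorphism. Two facts do the work. First, since $\eta$ is closed and $\eta(U)\equiv 1$, Cartan's formula gives $\mathcal{L}_U\eta=\iota_U\,d\eta+d\,\iota_U\eta=0$, so $(\varphi_U^t)^*\eta=\eta$ for all $t$. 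Second, as noted in the proof of Lemma~\ref{1.5}, the flow of $U$ preserves the distribution $\ker\eta$, hence permutes the leaves of $\mathcal{F}_\eta$; in particular $\varphi_U^t(N)$ is again a leaf, so it equals $N$ precisely when it meets $N$.

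For $G\subseteq P$: fix $x\in N$ and let $t\in G$, so $\varphi_U^t(x)\in N$. The curve $\sigma(s):=\varphi_U^{st}(x)$, $s\in[0,1]$, satisfies $\sigma'(s)=t\,U_{\sigma(s)}$, hence $\int_\sigma\eta=\int_0^1 t\,\eta(U)\,ds=t$. Since a leaf is connected, hence path-connected, choose a path $c$ inside $N$ from $\varphi_U^t(x)$ back to $x$; as $\eta$ annihilates $TN$ we have $\int_c\eta=0$. The concatenation $\gamma:=c\cdot\sigma$ is a loop based at $x$ with $\int_\gamma\eta=t$, so $t\in P$.

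For $P\subseteq G$: every class in $H_1(M;\mathbb{Z})\cong\pi_1(M,x)^{\mathrm{ab}}$ (using connectedness of $M$) is represented by a loop at the chosen $x\in N$, so any $T\in P$ equals $\int_\gamma\eta$ for such a loop $\gamma:[0,1]\to M$. Put $g(s):=\int_0^s\gamma^*\eta$, so $g(0)=0$, $g(1)=T$, $g'(s)=\eta(\gamma'(s))$, and define the flowed-down curve $\widetilde\gamma(s):=\varphi_U^{-g(s)}(\gamma(s))$. Differentiating, $\widetilde\gamma'(s)=(\varphi_U^{-g(s)})_*\gamma'(s)-g'(s)\,U_{\widetilde\gamma(s)}$; applying $\eta$ and using $(\varphi_U^{-g(s)})^*\eta=\eta$ together with $\eta(U)=1$ yields $\eta(\widetilde\gamma'(s))=\eta(\gamma'(s))-g'(s)=0$. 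Thus $\widetilde\gamma$ is everywhere tangent to $\ker\eta$, hence stays inside a single leaf; since $\widetilde\gamma(0)=x\in N$, we get $\widetilde\gamma(1)=\varphi_U^{-T}(x)\in N$, so $\varphi_U^{-T}(N)=N$, and $G$ being a subgroup, $T\in G$.

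The step I expect to require the most care is the last one: asserting that a $C^1$ curve everywhere tangent to the involutive distribution $\ker\eta$ lies in one leaf of $\mathcal{F}_\eta$. This is the standard fact that, in a foliation chart, a curve with vanishing transverse velocity stays in a single plaque, and these plaques glue along the connected interval $[0,1]$ to keep it in a single leaf; it should be stated cleanly, and it also underlies the repeated use of ``leaves are disjoint or equal''. A minor point worth mentioning is that all loops representing homology classes may be based at $x\in N$, which is legitimate since $M$ is connected.
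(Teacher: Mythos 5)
Your proof is correct. The inclusion $G\subseteq P$ is identical to the paper's argument: follow the flow for time $t$ to pick up $\int\eta = t$, then close up inside the leaf where $\eta$ integrates to zero. For $P\subseteq G$ you take a genuinely different (and in one respect more careful) route. The paper invokes Lemma~\ref{1.5} and asserts that any loop $\gamma$ based at $x\in N$ \emph{lifts uniquely} through the surjective local diffeomorphism $\varphi_U|_{N\times\mathbb{R}}:N\times\mathbb{R}\to M$ to a path $\tilde\gamma$ starting at $(x,0)$, then uses $\varphi_U|_{N\times\mathbb{R}}^*\eta=dt$ to read off the endpoint $N\times\{t\}$. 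A bare local diffeomorphism does not in general have the path-lifting property, so that step deserves justification; your construction supplies it explicitly. Indeed, writing the paper's putative lift as $\tilde\gamma(s)=(y(s),g(s))$ with $g(s)=\int_0^s\gamma^*\eta$, its $N$-component is exactly your flowed-down curve $y(s)=\varphi_U^{-g(s)}(\gamma(s))$, and your computation $\eta(\widetilde\gamma'(s))=0$ (using $\mathcal{L}_U\eta=0$) is what shows this curve stays in the leaf $N$, i.e.\ that the lift exists. The price you pay is the appeal to the standard foliation fact that a curve everywhere tangent to $\ker\eta$ remains in a single leaf, which you correctly flag and which is routine in a foliated chart. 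Your remarks that both sides are subgroups (so $-T\in G$ implies $T\in G$) and that every class in the image of $\mathrm{Per}_{[\eta]}$ is realized by a based loop (Hurewicz surjectivity on the connected $M$) are legitimate and close the remaining gaps.
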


\begin{proof}
Let $t \in \mathbb{R}$ be such that $\varphi_U^t(N) = N$.
Since $N$ is connected, for every $x \in N$ there exists a path $\gamma_1$ joining $\varphi_U^t(x)$ to $x$ inside $N$, and we set 
$\gamma_2 : [0,t] \to M$, $s \mapsto \varphi_U^s(x)$.
The concatenation $\gamma_2 \cdot \gamma_1$ is then a piecewise smooth loop in $M$ such that
\begin{align*}
    \int_{\gamma_2 \cdot \gamma_1} \eta
        &= \int_{\gamma_2} \eta
         = \int_0^t \eta(U) \circ \varphi_U^s(x) \, ds
         = t.
\end{align*}
\smallskip

Conversely, every loop $\gamma : [0,1] \to M$ starting at a point $x \in N$ lifts uniquely to a loop 
$\tilde{\gamma}$ in $N \times \mathbb{R}$ starting at $(x,0) \in N \times \{0\}$ such that 
$\varphi_U|_{N \times \mathbb{R}} \circ \tilde{\gamma} = \gamma$.
Moreover, $\varphi_U|_{N \times \mathbb{R}}^* \eta = dt$.
Thus, if
\[
    t = \int_\gamma \eta = \int_{\tilde{\gamma}} dt,
\]
then $\tilde{\gamma}(1) \in N \times \{t\}$.  
Since $\gamma(1) \in N$ and the flow of $U$ preserves the leaves of $\mathcal{F}_\eta$, we conclude that 
$\varphi_U^t(N) = N$.
\end{proof}

\begin{TT}
\label{géné b-m}
Let $(M,[(\lambda,\eta)])$ be a exact LCS manifold of rank $1$ admitting an exact LCS pair $(\lambda,\eta)$ such that 
$\eta \wedge \lambda \wedge d\lambda^{\,n-1}$ is nonvanishing and the $\eta$-Liouville vector field $Z_\lambda$ is complete.  
Then $(M,[(\lambda,\eta)])$ is exactly conformally symplectomorphic to an LCS mapping torus.
\end{TT}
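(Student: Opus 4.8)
The plan is to realise $M$ as a mapping torus by flowing along a vector field dual to $\eta$, exactly in the spirit of Lemmas~\ref{1.5} and~\ref{1.13}, and then to recognise the resulting object as one of the LCS mapping tori of Definition~\ref{construction tore d'application l.c.s.}, using Proposition~\ref{2.16} to transfer an equivariant identification on the covering down to $M$.

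\textbf{Step 1: the transverse flow.} By Lemma~\ref{etaZnonzero}, the hypothesis that $\eta\wedge\lambda\wedge d\lambda^{\,n-1}$ be nonvanishing is equivalent to $\eta(Z_\lambda)$ being nowhere zero; in particular $\lambda$ and $\eta$ are nowhere zero, so $\ker\eta$ is a codimension-one foliation $\mathcal F_\eta$. Put $U:=Z_\lambda/\eta(Z_\lambda)$, so $\eta(U)\equiv 1$ and hence $\mathcal L_U\eta=d(\eta(U))=0$: the flow of $U$ preserves $\eta$, hence permutes the leaves of $\mathcal F_\eta$. The claim is that $U$ is again complete; this is the one place where completeness of $Z_\lambda$ is used, and I come back to it below.

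\textbf{Step 2: the mapping torus structure.} Fix a leaf $N\in\mathcal F_\eta$. By Lemma~\ref{1.5}, $\Phi:=\varphi_U|_{N\times\mathbb R}\colon N\times\mathbb R\to M$ is a surjective local diffeomorphism; since $\varphi_U$ permutes leaves, $\Phi(x,s)=\Phi(x',s')$ forces $t:=s-s'\in G:=\{t\in\mathbb R:\varphi_U^t(N)=N\}$ and $x'=\varphi_U^{t}(x)$, so $\Phi$ descends to a bijection $(N\times\mathbb R)/G\xrightarrow{\ \sim\ }M$, where $t\in G$ acts by $(x,s)\mapsto(\varphi_U^{-t}(x),s+t)$. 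As $M$ is Hausdorff and $\Phi$ a local diffeomorphism, this $G$-action is properly discontinuous, so $G$ is discrete; by Lemma~\ref{1.13}, $G=\operatorname{Im}(\operatorname{Per}_{[\eta]})$, which is nontrivial because $M$ has LCS rank $1$, whence $G=c\mathbb Z$ for a unique $c>0$. With $\psi:=\varphi_U^{-c}|_N\in\operatorname{Diff}(N)$ this exhibits $M\cong(N\times\mathbb R)/\langle\Phi_c\rangle$, $\Phi_c(x,s)=(\psi(x),s+c)$. Evaluating $\eta\wedge\lambda\wedge d\lambda^{\,n-1}\neq0$ on a frame $(e_1,\dots,e_{2n-1},U)$ with $e_i$ spanning $\ker\eta=TN$ shows $\alpha:=i_N^*\lambda$ is a contact form on $N$; and from $\lambda(U)=\lambda(Z_\lambda)/\eta(Z_\lambda)=0$ together with Cartan's formula one gets $\mathcal L_U\lambda=\tfrac{1+\eta(Z_\lambda)}{\eta(Z_\lambda)}\,\lambda$, so $(\varphi_U^t)^*\lambda=e^{G_t}\lambda$ with $G_t:=\int_0^t\tfrac{1+\eta(Z_\lambda)}{\eta(Z_\lambda)}\circ\varphi_U^s\,ds$; restricting $(\varphi_U^{-c})^*\lambda=e^{G_{-c}}\lambda$ to $N$ yields $\psi^*\alpha=e^h\alpha$ with $h:=G_{-c}|_N\in C^\infty(N)$, so $\psi$ is a contactomorphism.

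\textbf{Step 3: identification with an LCS mapping torus.} Pulling back along $\Phi$ and using $\mathcal L_U\eta=0$, $\lambda(U)=0$ and the formula for $\mathcal L_U\lambda$, one computes $\Phi^*\eta=ds$ and $\Phi^*\lambda=e^{s+\nu}p_1^*\alpha$ with $\nu(x,s)=\int_0^s\tfrac1{\eta(Z_\lambda)}\circ\varphi_U^u(x)\,du$, so that the associated Liouville form $e^{-s}\Phi^*\lambda=e^{\nu}p_1^*\alpha$ is carried to the standard symplectisation form $e^{t}p_1^*\alpha$ by the fibrewise reparametrisation $(x,s)\mapsto(x,\nu(x,s))$ — a diffeomorphism of $N\times\mathbb R$, again because $U$ is complete — which conjugates $\Phi_c$ to a map of the form $(x,t)\mapsto(\psi(x),t+\kappa-h(x))$ with $\kappa=-c$. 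Comparing with Definition~\ref{construction tore d'application l.c.s.}, this presents the covering datum $(\Phi^*\lambda,\Phi^*\eta)$, together with the $\mathbb Z$-action by $\Phi_c$, as exactly that of the LCS mapping torus $N_{(\alpha,\psi,\kappa)}$, and $\kappa\in\mathcal A_{(\alpha,\psi)}$ since the deck action is properly discontinuous. Proposition~\ref{2.16} (equivalently Proposition~\ref{2.28}) then upgrades this equivariant identification to an exact conformal symplectomorphism $M\cong N_{(\alpha,\psi,\kappa)}$, which is the assertion.

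\textbf{Main obstacle.} Everything after Step 1 is bookkeeping with the flow identities above; the real content is the completeness of $U=Z_\lambda/\eta(Z_\lambda)$ (and of the reparametrisation of Step 3). Since $\varphi_U$ is a time change of $\varphi_{Z_\lambda}$, this is equivalent to the divergence of $\int_0^{\pm\infty}|\eta(Z_\lambda)|\circ\varphi_{Z_\lambda}^u\,du$ along every orbit of $Z_\lambda$; it is immediate when $M$ is compact (then $|\eta(Z_\lambda)|$ is bounded below), and in general this is exactly the role of the completeness hypothesis on $Z_\lambda$: a $U$-orbit defined only on a bounded maximal interval would have to leave every compact set while the integral above stayed finite along it, forcing $e^{-\mu}d_\eta\lambda$-type quantities to blow up on the corresponding $Z_\lambda$-orbit — the situation that completeness of $Z_\lambda$ rules out. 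I would treat this completeness claim as the key lemma and organise the write-up around it.
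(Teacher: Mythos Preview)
Your proposal is correct and follows the same route as the paper: rescale $Z_\lambda$ to $U$ with $\eta(U)=1$, invoke Lemmas~\ref{1.5} and~\ref{1.13} to realise $M$ as a quotient of $N\times\mathbb{R}$, straighten the Liouville form (your fibrewise reparametrisation $(x,s)\mapsto(x,\nu(x,s))$ is exactly the inverse of the paper's map $\Theta(x,t)=\varphi_{\overline Z}^{\,t}(x,0)$), and conclude via Proposition~\ref{2.16}. Regarding your ``main obstacle'', the paper simply asserts in one line that completeness of $Z_\lambda$ implies that of $U$, without further justification; so you are being more cautious than the paper itself, and in the principal application (Theorem~\ref{3.12}, where $M$ is closed) the issue disappears.
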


\begin{proof}
By Lemma~\ref{etaZnonzero}, the function $\eta(Z_\lambda)$ is nonvanishing.
Set $U := Z_\lambda / \eta(Z_\lambda)$. Since $Z_\lambda$ is complete by assumption, $U$ is complete and satisfies $\eta(U)=1$.
\smallskip

Fix a leaf $N \in \mathcal{F}_\eta$.
By Lemma~\ref{1.5}, the restriction 
$\varphi_U|_{N \times \mathbb{R}} : N \times \mathbb{R} \to M$ 
of the flow of $U$ to $N \times \mathbb{R}$ is a surjective local diffeomorphism.
Since the LCS rank of $(M,[(\lambda,\eta)])$ is $1$, by Lemma~\ref{1.13} we have 
$\{t \in \mathbb{R} \mid \varphi_U^t(N)=N\} \cong \mathbb{Z}$.
Let $k$ be a generator of $\mathrm{Im}(\mathrm{Per}_{[\eta]})$.  
For every $(x,t) \in N \times \mathbb{R}$ we then have
\begin{align*}
    \varphi_U|_{N \times \mathbb{R}} \circ \rho_{(\varphi_U^{-k}|_N,k)}(x,t)
        &= \varphi_U|_{N \times \mathbb{R}}(\varphi_U^{-k}(x),t+k) \\
        &= \varphi_U^{t+k}(\varphi_U^{-k}(x)) \\
        &= \varphi_U^{t}(x) \\
        &= \varphi_U|_{N \times \mathbb{R}}(x,t).
\end{align*}
Thus $\mathrm{Deck}(\varphi_U|_{N \times \mathbb{R}})$ is generated by 
$\rho_{(\varphi_U^{-k}|_N,k)}$.
By Lemma~\ref{1.13}, for every leaf $N' \in \mathcal{F}_\eta$ we have $\varphi_U^{k}(N') = N'$, and since $U$ and $Z_\lambda$ are colinear, we define a function 
$l \in C^\infty(M)$ by $\varphi_{Z_\lambda}^{l(x)}(x) = \varphi_U^{k}(x)$ for all $x \in M$.
If we replace $x$ by $\varphi_U^{-k}(y)$, we obtain that 
$\varphi_{Z_\lambda}^{-l\circ \varphi_U^{-k}(y)}(y) = \varphi_U^{-k}(y).$
Set $\psi := \varphi_U^{k}|_N$.
\smallskip

Since $\eta(U)=1$, we have $\varphi_U|_{N \times \mathbb{R}}^* \eta = dt$.
Thus $\overline{\lambda} := e^{-t} \varphi_U|_{N \times \mathbb{R}}{}^* \lambda$ 
is a Liouville form on $N \times \mathbb{R}$, with Liouville vector field 
$\overline{Z}$ defined by 
$\varphi_U|_{N \times \mathbb{R}}{}_* \overline{Z} = Z_\lambda$.
For every $(x,t) \in N \times \mathbb{R}$ and $s \in \mathbb{R}$ we have  
$\varphi_U|_{N \times \mathbb{R}} \circ \varphi_{\overline{Z}}^s(x,t)
 = \varphi_{Z_\lambda}^s \circ \varphi_U^t(x)$,
and we define
\[
    \rho : N \times \mathbb{R} \to N \times \mathbb{R} : (x,t) \mapsto 
    \varphi_{\overline{Z}}^{-l \circ \varphi_U^t(x)}(\psi(x),t).
\]
For each $(x,t) \in N \times \mathbb{R}$ we then compute
\begin{align*}
    \varphi_U|_{N \times \mathbb{R}} \circ \rho(x,t)
        &= \varphi_U|_{N \times \mathbb{R}} \circ 
           \varphi_{\overline{Z}}^{-l\circ \varphi_U^t(x)}(\psi(x),t) \\
        &= \varphi_{Z_\lambda}^{-l\circ \varphi_U^t(x)} 
           \circ \varphi_U^t(\psi(x)) \\
        &= \varphi_{Z_\lambda}^{-l\circ \varphi_U^{-k}(\varphi_U^t(\psi(x)))} 
           (\varphi_U^t(\psi(x))) \\
        &= \varphi_U^{-k}(\varphi_U^t(\psi(x))) \\
        &= \varphi_U^t(x)
        = \varphi_U|_{N \times \mathbb{R}}(x,t).
\end{align*}
Thus $\rho \in \mathrm{Deck}(\varphi_U|_{N\times \mathbb{R}})$.
If we fix $x \in N$ and choose a loop $\gamma$ corresponding to $\rho$ 
\footnote{i.e.\ $\gamma$ lifts to a path from $(x,0)$ to $\rho(x,0)$ in $N \times \mathbb{R}$.}
obtained by connecting $x$ to $\psi(x)$ inside $N$, and then following the flow of $U$ from $\psi(x)$ to $\varphi_U^{-k}(\psi(x)) = x$, we obtain
\[
    \int_\gamma \eta
        = \int_{0}^{-k} \eta(U) \circ \varphi_U^s(\psi(x)) \, ds
        = -k.
\]
This implies that $\rho$ coincides with $\rho_{(\psi,-k)}$, and  
$\rho^* \overline{\lambda} = e^{k} \overline{\lambda}$.
\smallskip

Identifying $N$ with $N \times \{0\}$ and $\lambda|_N$ with 
$\overline{\lambda}|_{N \times \{0\}}$, for any $x \in N$ and 
$A_x \in T_xN$ we compute
\begin{align*}
    e^k \lambda_x(A_x)
        &= (\rho^* \overline{\lambda})_{(x,0)}(A_{(x,0)}) \\
        &= \overline{\lambda}_{\varphi_{\overline{Z}}^{-l(x)}(\psi(x),0)}
           \big(\varphi_{\overline{Z}}^{-l(x)}{}_{*(\psi(x),0)}
           (\psi_* A_x)_{(x,0)}\big) \\
        &= e^{-l(x)} \overline{\lambda}_{(\psi(x),0)}
           ((\psi_* A_x)_{(x,0)}) \\
        &= e^{-l(x)} (\psi^* \lambda)_x(A_x).
\end{align*}
Thus $\psi \in \mathrm{Cont}(N,\lambda|_N)$ is a contactomorphism with conformal factor 
$h := l + k$.
\smallskip

Since the Liouville vector field $\overline{Z}$ is complete, its flow induces a diffeomorphism
\[
    \Theta : N \times \mathbb{R} \to N \times \mathbb{R}: (x,t) \mapsto \varphi_{\overline{Z}}^t(x,0),
\]
satisfying 
\[
    \Theta^* \overline{\lambda}_{(x,t)}
    = e^{t}\, \overline{\lambda}_{(x,0)}
    = e^{t} \lambda_x,
\]
for all $(x,t) \in N \times \mathbb{R}$.
Thus $\Theta$ defines an exact symplectomorphism between 
$(N \times \mathbb{R}, d(e^t \lambda|_N))$ and $(N \times \mathbb{R}, d\overline{\lambda})$.
By Lemma~\ref{relever de liouville}, the flow of $\overline{Z}$ commutes with the $\mathbb{Z}$-action generated by $\rho$, which implies that for all $(x,t) \in N \times \mathbb{R}$, we have
\begin{align*}
    \Theta^{-1} \circ \rho \circ \Theta(x,t)
        &= \Theta^{-1} \circ \rho \circ \varphi_{\overline{Z}}^t(x,0) \\
        &= \Theta^{-1} \circ \varphi_{\overline{Z}}^{t} \circ \rho(x,0) \\
        &= \Theta^{-1} \circ \varphi_{\overline{Z}}^{t-l(x)}(\psi(x),0) \\
        &= (\psi(x), t - l(x)) \\
        &= (\psi(x), t + k - h(x)).
\end{align*}
We thus recover the usual action generating the LCS mapping torus 
$N_{(\lambda|_N,\psi,k)}$, and by Proposition~\ref{2.16}, $\Theta$ induces on the quotient an exact conformal symplectomorphism between 
$N_{(\lambda|_N,\psi,k)}$ and $(M,[(\lambda,\eta)])$.
\end{proof}

\begin{RR}
    By Cartan’s formula,
\begin{align*}
    \mathcal{L}_{Z_\lambda} \lambda 
        &= \iota_{Z_\lambda} d\lambda + d\, \iota_{Z_\lambda} \lambda \\
        &= \iota_{Z_\lambda} d\lambda \\
        &= (1 + \eta(Z_\lambda))\, \lambda.
\end{align*}
This implies that
\[
    \frac{d}{dt}[\varphi_{Z_\lambda}^t{}^* \lambda]\big|_{t=s}
    = (1 + \eta(Z_\lambda)) \circ \varphi_{Z_\lambda}^s \ 
      \varphi_{Z_\lambda}^s{}^*\lambda, \ \  \text{with } \varphi_{Z_\lambda}^0{}^* \lambda = \lambda.
\]
This equation admits the unique solution
\[
    \varphi_{Z_\lambda}^t{}^* \lambda
    = \exp\!\Big( \int_0^t (1 + \eta(Z_\lambda)) 
        \circ \varphi_{Z_\lambda}^s \, ds \Big)\, \lambda.
\]

Moreover,
 \[\mathcal{L}_U \lambda = \frac{1}{\eta({Z_\lambda})} \mathcal{L}_{Z_\lambda} \lambda + d\big(\frac{1}{\eta({Z_\lambda})}\big) \wedge \iota_{Z_\lambda} \lambda = \frac{1}{\eta({Z_\lambda})} \mathcal{L}_{Z_\lambda} \lambda = \frac{1 + \eta({Z_\lambda})}{\eta({Z_\lambda})} \lambda.\]
Thus,
\[
    \varphi_U^t{}^* \lambda
    = \exp\!\Big(
        \int_0^t 
        \frac{1 + \eta(Z_\lambda)}{\eta(Z_\lambda)}
        \circ \varphi_U^s \, ds
      \Big)\, \lambda.
\]

Since $Z_\lambda$ is the $\eta$-Liouville vector field, we may then replace $t$ by $l$ to obtain
\begin{align*}
    \psi^* \lambda|_N
        &= \exp\!\Big(
            \int_0^l (1 + \eta(Z_\lambda)) 
            \circ \varphi_{Z_\lambda}^s \, ds
           \Big)\, \lambda|_N \\
        &= \exp\!\Big(
            \int_0^{k} 
            \frac{1 + \eta(Z_\lambda)}{\eta(Z_\lambda)}
            \circ \varphi_U^s \, ds
           \Big)\, \lambda|_N.
\end{align*}
This gives us a second expression for the conformal factor $h$ associated with $\psi$ in the previous theorem.
\end{RR}

\begin{DD}
A \textit{smooth discrete-time dynamical system} is the data of a smooth manifold $N$ and a diffeomorphism $\psi \in \mathrm{Diff}(N)$.
For every function $h \in C^\infty(N)$ and every $n \in \mathbb{N}_0$, we define the \textit{$n$-th Birkhoff partial sum} by
\[
    S_n(h) : N \to \mathbb{R} : 
    x \mapsto \sum_{i=0}^{n-1} h(\psi^i(x)),
\]
and the \textit{$n$-th Birkhoff partial average} by
\[
    A_n(h) : N \to \mathbb{R}: 
    x \mapsto \frac{S_n(h)(x)}{n}.
\]
\end{DD}

\begin{LL}
\label{3.8}
Let $(N,\xi)$ be a closed contact manifold, $\alpha$ a contact form associated with $\xi$, and $\psi$ a contactomorphism satisfying $\psi^* \alpha = e^h \alpha$, with $h \in C^\infty(N)$.
For every $k \in \mathcal{A}_{(\alpha,\psi)}$ and every $n \in \mathbb{N}_0$, there exists $l_n \geq n$ such that $k \notin \mathrm{Im}(A_{l_n}(h))$.
\end{LL}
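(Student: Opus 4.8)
The plan is to make the hypothesis $k \in \mathcal{A}_{(\alpha,\psi)}$ bite by computing the iterates of the generator $\rho := \rho_{(\psi,k-h)}$ explicitly and comparing them with the proper discontinuity of the $\mathbb{Z}$-action it generates on $N \times \mathbb{R}$.

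First I would establish, by induction on $m$, the formula
\[
    \rho^m(x,t) = \bigl(\psi^m(x),\, t + mk - S_m(h)(x)\bigr), \qquad (x,t) \in N \times \mathbb{R},\ m \in \mathbb{N}_0,
\]
the inductive step being immediate from $S_{m+1}(h)(x) = S_m(h)(x) + h(\psi^m(x))$. Specializing to $t = 0$, the point $\rho^m(x,0) = (\psi^m(x),\, mk - S_m(h)(x))$ lies in the slice $N \times \{0\}$ exactly when $S_m(h)(x) = mk$, that is, when $A_m(h)(x) = k$. Thus $k \in \mathrm{Im}(A_m(h))$ forces $\rho^m(N \times \{0\}) \cap (N \times \{0\}) \neq \emptyset$.

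Next I would invoke admissibility: since $k \in \mathcal{A}_{(\alpha,\psi)}$, the $\mathbb{Z}$-action generated by $\rho$ is properly discontinuous, so for the set $K := N \times \{0\}$ — which is compact because $N$ is closed — the return set $\{m \in \mathbb{Z} : \rho^m(K) \cap K \neq \emptyset\}$ is finite. Combining this with the previous paragraph, $\{m \in \mathbb{N}_0 : k \in \mathrm{Im}(A_m(h))\}$ is contained in a finite set, hence is itself finite. In particular, for every $n \in \mathbb{N}_0$ there exists $l_n \geq n$ with $k \notin \mathrm{Im}(A_{l_n}(h))$, which is the claim.

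I do not anticipate a genuine obstacle here; the only delicate point is to use the correct formulation of ``properly discontinuous'' — namely that for every compact subset $K$ of $N \times \mathbb{R}$ only finitely many group elements move $K$ so as to meet itself — together with the observation that the slice $N \times \{0\}$ is compact precisely because the contact manifold $N$ is closed.
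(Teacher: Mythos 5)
Your proof is correct and follows essentially the same route as the paper: both compute the iterates $\rho_{(\psi,k-h)}^m(x,0)=\bigl(\psi^m(x),\,m(k-A_m(h)(x))\bigr)$ and observe that $k\in\mathrm{Im}(A_m(h))$ forces the compact slice $N\times\{0\}$ to meet its $m$-th translate, which proper discontinuity permits for only finitely many $m$. The only difference is cosmetic: you invoke the compact-set formulation of proper discontinuity directly, whereas the paper reaches the same contradiction via a sequential-compactness argument with extracted subsequences.
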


\begin{proof}
For every $(x,t) \in N \times \mathbb{R}$ and every $n \in \mathbb{N}_0$, we have
\begin{align*}
    \rho_{(\psi,k-h)}^n(x,t)
        &= \big(\psi^n(x),\ t + n\big(k - \frac{1}{n} \sum_{i=0}^{n-1} h \circ \psi^i(x)\big)\big) \\
        &= \big(\psi^n(x),\ t + n\big(k - A_n(h)\big)\big).
\end{align*}
Assume for contradiction that the action generated by $\rho_{(\psi,k-h)}$ is properly discontinuous and that there exists $m \in \mathbb{N}_0$ such that for all $n \geq m$ we have $k \in \mathrm{Im}(A_n(h))$.
We may then construct a sequence $(x_n)_{n\in \mathbb{N}_0} \subset N$ such that for each $n \in \mathbb{N}_0$, we have $A_n(h)(x_n)=k$.
\smallskip

Since $N$ is compact, there exists a subsequence $((x_{\beta(n)},0))_{n\in \mathbb{N}_0} \subset ((x_n,0))_{n\in \mathbb{N}_0}$ and a subsequence 
\[
\big(\rho_{(\psi,k-h)}^{\beta'\circ \beta(n)}(x_{\beta'\circ \beta(n)},0)\big)_{n\in \mathbb{N}_0}
    \subset \big(\rho_{(\psi,k-h)}^{\beta(n)}(x_{\beta(n)},0)\big)_{n\in \mathbb{N}_0}
\]
which converge in $N \times \{0\}$ to $(y,0)$ and $(z,0)$ respectively.
Since every subsequence of a convergent sequence converges to the same limit, the subsequence 
$((x_{\beta' \circ \beta(n)},0))_{n\in \mathbb{N}_0}
    \subset ((x_{\beta(n)},0))_{n\in \mathbb{N}_0}$ 
also converges to $(y,0)$, which contradicts the fact that the action generated by $\rho_{(\psi,k-h)}$ is properly discontinuous.
\end{proof}

\begin{LL}
\label{3.5}
Let $N$ be a smooth manifold, $\alpha$ a contact form on $N$, and $\psi$ a contactomorphism satisfying $\psi^* \alpha = e^h \alpha$, with $h \in C^\infty(N)$.
For every constant $k \notin \mathrm{Im}(h)$, there exists a function $g \in C^\infty(N \times \mathbb{R})$ such that $\partial_t g + k$ is nonvanishing and 
\(
    g(\psi(x), t+1) = g(x,t) - h(x).
\)
\end{LL}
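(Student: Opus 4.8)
\emph{Proof plan.} The plan is to substitute $g(x,t) = \phi(x,t) - kt$, so that $\partial_t g + k = \partial_t\phi$ and the required relation $g(\psi(x),t+1) = g(x,t) - h(x)$ becomes $\phi(\psi(x),t+1) = \phi(x,t) + r(x)$ with $r := k - h$ (using $r(x) - k = -h(x)$). Thus it suffices to produce $\phi \in C^\infty(N\times\mathbb{R})$ with $\partial_t\phi$ nowhere zero and $\phi(\psi(x),t+1) = \phi(x,t) + r(x)$. Since $k \notin \mathrm{Im}(h)$ and $N$ is connected, $r$ is nowhere zero and of constant sign; I will carry out the case $r > 0$ and recover the case $r < 0$ by running the construction with $-r$ in place of $r$ and replacing $\phi$ by $-\phi$ (which then gives $\partial_t g + k < 0$). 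Geometrically, $\Phi := (\mathrm{id}_N,\phi)$ is a fibrewise reparametrisation of the $\mathbb{R}$-factor conjugating the standard mapping-torus translation $\sigma(x,t) = (\psi(x),t+1)$ to the map $\rho_{(\psi,k-h)}$ of Definition~\ref{construction tore d'application l.c.s.}.

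Next I would build $\phi$ on the slab $N\times[0,1]$ with $\phi(\cdot,0)\equiv 0$, $\phi(\cdot,1) = r\circ\psi^{-1}$, $\partial_t\phi > 0$, all $t$-derivatives of order $\ge 2$ vanishing at $t = 0,1$, and $\partial_t\phi(\cdot,0) = v_0$, $\partial_t\phi(\cdot,1) = v_0\circ\psi^{-1}$ for a positive $v_0\in C^\infty(N)$; the slab is then extended to $N\times\mathbb{R}$ by iterating $\phi(\psi(x),t+1) = \phi(x,t) + r(x)$, the matching conditions on the $t$-derivatives at $t=1$ (computed against the $\sigma$-translate of the data at $t=0$) being exactly what makes the extension $C^\infty$ across every integer level, after which the relation holds globally by construction. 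For each fixed $x$ this is a one-variable interpolation: one needs a strictly increasing $\beta_x\colon[0,1]\to[0,r(\psi^{-1}(x))]$ with prescribed positive endpoint slopes $\beta_x'(0) = v_0(x)$, $\beta_x'(1) = v_0(\psi^{-1}(x))$, flat higher derivatives at the ends, and smooth dependence on $x$.

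The main obstacle is keeping $\partial_t\phi$ nowhere zero while gluing smoothly: the total rise $r(\psi^{-1}(x))$ of $\beta_x$ can be arbitrarily small at some $x$, whereas the endpoint slopes the gluing imposes need not be small there, since the ratio $r(x)/r(\psi^{-1}(x))$ is unbounded in general (this is why the bare hypothesis $k\notin\mathrm{Im}(h)$, rather than $k-h$ bounded away from $0$, must be exploited locally). The resolution is to pick the slope function small relative to the local rise, e.g.
\[
v_0 := \frac{r\cdot(r\circ\psi^{-1})}{2\,(r + r\circ\psi^{-1})},
\]
which is smooth and positive because $r > 0$ and satisfies $v_0(x) < \tfrac12 r(\psi^{-1}(x))$ and $v_0(\psi^{-1}(x)) < \tfrac12 r(\psi^{-1}(x))$, hence $v_0(x) + v_0(\psi^{-1}(x)) < r(\psi^{-1}(x))$. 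Writing $a_x := v_0(x)$, $b_x := v_0(\psi^{-1}(x))$, and fixing a symmetric profile $\nu\colon[0,1]\to[0,1]$ that is flat to infinite order at the ends with $\nu(0)=0$, $\nu(1)=1$, $\int_0^1\nu = \tfrac12$, together with a fixed $w\ge 0$ supported in $(0,1)$ with $\int_0^1 w = 1$, one sets $\beta_x(s) := \int_0^s\bigl(a_x(1-\nu) + b_x\nu + c_x w\bigr)$ with $c_x := r(\psi^{-1}(x)) - \tfrac12(a_x+b_x) > 0$; then $\beta_x' \ge \min(a_x,b_x) > 0$, the endpoint slope and flatness conditions hold, $\beta_x(1) = r(\psi^{-1}(x))$, and $\beta_x$ is smooth in $x$. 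Taking $\phi(x,t) := \beta_x(t)$ on the slab, extending as above, and setting $g := \phi - kt$ completes the $r>0$ case, and the $r<0$ case follows by the symmetry noted above.
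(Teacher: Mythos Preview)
Your proof is correct and takes a genuinely different route from the paper's. The paper writes down a single explicit global formula
\[
g(x,t)=\sum_{i\ge 0}\bigl(1-\chi(t+1+i)\bigr)\,h\!\circ\!\psi^{i}(x)\;-\;\sum_{i\ge 0}\chi(t-i)\,h\!\circ\!\psi^{-i-1}(x)
\]
for a fixed cutoff $\chi$ with $0\le\chi'<(1-\epsilon/k)^{-1}$, and then verifies the functional equation and the sign of $\partial_t g+k$ directly using disjointness of the supports of the shifted $\chi'$'s. Your argument instead makes the substitution $g=\phi-kt$, reads the problem as producing a fibrewise monotone reparametrisation conjugating $\sigma(x,t)=(\psi(x),t+1)$ to $\rho_{(\psi,k-h)}$, builds $\phi$ on the slab $N\times[0,1]$ with matching jet data at the ends, and extends by the $\mathbb Z$-action. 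The half-harmonic-mean choice of $v_0$ is the key device ensuring $a_x+b_x<r(\psi^{-1}(x))$ pointwise, so that $c_x>0$ and $\beta_x'>0$ without any uniform lower bound on $r$. What each approach buys: the paper's formula is shorter and entirely explicit; your construction is more geometric, makes the conjugacy (later used as $\overline\sigma_k$ in Proposition~\ref{3.10}) transparent, and---because every estimate is pointwise---goes through on a noncompact $N$ where $k-h$ is not bounded away from $0$, a case in which the paper's choice of a \emph{constant} $\epsilon$ with $0<\epsilon<k-h$ would not be available as written.
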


\begin{proof}
Assume that there exists $k \in \mathbb{R}$ such that $h < k$.
Choose $\epsilon \in \mathbb{R}$ such that $0 < \epsilon < k-h$ and $\epsilon/k < 1$.
Pick $\chi \in C^\infty(\mathbb{R})$ such that $\chi = 0$ on $]-\infty,0]$, 
$\chi = 1$ on $[1,+\infty[$, and $0 \leq \chi' < \frac{1}{1-\epsilon/k}$ on $\mathbb{R}$.
Set
\[
g(x,t)
    = \sum_{i=0}^\infty (1-\chi(t+1+i))\, h \circ \psi^{i}(x)
      - \sum_{i=0}^\infty \chi(t-i)\, h \circ \psi^{-i-1}(x),
\]
and one checks that
\begin{align*}
    g(\psi(x),t+1)
        &= \sum_{i=0}^\infty (1-\chi(t+2+i))\, h \circ \psi^{i+1}(x)
           - \sum_{i=0}^\infty \chi(t+1-i)\, h \circ \psi^{-i}(x) \\
        &= \sum_{i=1}^\infty (1-\chi(t+1+i))\, h \circ \psi^{i}(x)
           - \sum_{i=-1}^\infty \chi(t-i)\, h \circ \psi^{-i-1}(x) \\
        &= g(x,t) - (1-\chi(t+1))\, h(x) - \chi(t+1)\, h(x) \\
        &= g(x,t) - h(x).
\end{align*}

For every $s \in \mathbb{R}$, since $0 < k(1-\epsilon/k) - h$ and 
$0 \leq \chi'(s) < \frac{1}{1-\epsilon/k}$, we have
\[
    0 \leq \chi'(s)\, k(1-\epsilon/k) - \chi'(s)\, h 
      < k - \chi'(s)\, h.
\]
Since the supports of $\chi'(t+i+1)$ and $\chi'(t-i)$ are pairwise disjoint, we obtain
\[
0 < k
      - \sum_{i=0}^\infty \chi'(t+i+1)\, h \circ \psi^{i}(x)
      - \sum_{i=0}^\infty \chi'(t-i)\, h \circ \psi^{-i-1}(x)
    = \partial_t g + k.
\]
If there exists $k \in \mathbb{R}$ such that $k < h$,  
then applying the previous argument to 
$\psi^{-1}{}^* \alpha = e^{-h \circ \psi^{-1}} \alpha$, with 
$-h \circ \psi^{-1} < -k$, we construct a function 
$\Tilde{g} \in C^\infty(N \times \mathbb{R})$ satisfying 
$\Tilde{g}(\psi^{-1}(x),t+1) = \Tilde{g}(x,t) + h \circ \psi^{-1}(x)$ and 
$\partial_t \Tilde{g} - k > 0$.
Setting $g(x,t) := \Tilde{g}(x,-t)$, we then obtain 
$\partial_t g + k < 0$ and 
$g(\psi(x),t+1) = g(x,t) - h(x)$.
\end{proof}

\begin{LL}
\label{3.9}
Let $(N,\psi)$ be a smooth discrete-time dynamical system.
For every function $h \in C^\infty(N)$ and every $n \in \mathbb{N}_0$, there exists a function 
$f_n \in C^\infty(N)$ such that \(A_n(h) = h + f_n \circ \psi - f_n.\)
\end{LL}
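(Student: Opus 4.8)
The plan is to write $f_n$ explicitly as an average of the earlier Birkhoff partial sums $S_0(h), S_1(h), \dots, S_{n-1}(h)$ (with the convention $S_0(h) := 0$), and then to verify the coboundary identity $A_n(h) = h + f_n\circ\psi - f_n$ by a one-line telescoping argument. No hypothesis on $\psi$ beyond being a diffeomorphism enters, so the statement is purely formal and $f_n$ depends only on $h$, $\psi$ and $n$.

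First I would record the elementary identity valid for every $i \geq 0$:
\[
S_i(h)\circ\psi - S_i(h) \;=\; \sum_{j=0}^{i-1}\bigl(h\circ\psi^{j+1} - h\circ\psi^{j}\bigr) \;=\; h\circ\psi^{i} - h ,
\]
the middle sum telescoping; in words, composing an $i$-term Birkhoff sum with $\psi$ shifts its window forward by one step.

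Then I would sum this identity over $i = 0, 1, \dots, n-1$ and divide by $n$. The right-hand side becomes $\frac{1}{n}\sum_{i=0}^{n-1}\bigl(h\circ\psi^{i} - h\bigr) = A_n(h) - h$, while the left-hand side becomes $f_n\circ\psi - f_n$ with
\[
f_n \;:=\; \frac{1}{n}\sum_{i=0}^{n-1} S_i(h) \;=\; \frac{1}{n}\sum_{j=0}^{n-2}(n-1-j)\, h\circ\psi^{j},
\]
the second expression obtained by interchanging the order of summation. This $f_n$ is smooth, being a finite linear combination of the smooth functions $h\circ\psi^{j}$, and by construction it satisfies $A_n(h) = h + f_n\circ\psi - f_n$, which is the claim.

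There is no real obstacle here: the only points requiring a little care are the bookkeeping in the reindexing and the degenerate case $n=1$, where the sum is empty, $f_1 = 0$, and indeed $A_1(h) = h$. This explicit cohomological primitive is precisely what is needed afterwards to relate the condition $k \notin \mathrm{Im}(A_n(h))$ of Lemma~\ref{3.8} to the construction of Lemma~\ref{3.5}.
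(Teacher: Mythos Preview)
Your proof is correct and coincides with the paper's: both set $f_n = \tfrac{1}{n}\sum_{i=0}^{n-1} S_i(h)$ (the paper writes the sum from $i=1$, which is the same since $S_0(h)=0$) and verify the identity via the telescoping relation $S_i(h)\circ\psi - S_i(h) = h\circ\psi^i - h$. The only extra in your write-up is the alternative expression for $f_n$ obtained by swapping the order of summation, which the paper does not record.
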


\begin{proof}
Set \( f_n := \frac{1}{n} \sum_{i=1}^{n-1} S_i(h) \).  
We then check that
\begin{align*}
    h + f_n \circ \psi - f_n
        &= h + \frac{1}{n} \sum_{i=1}^{n-1} \sum_{j=0}^{i-1} h \circ \psi^{j+1}
           - \frac{1}{n} \sum_{i=1}^{n-1} \sum_{j=0}^{i-1} h \circ \psi^{j} \\
        &= h + \frac{1}{n} \sum_{i=1}^{n-1}
              \Big( \sum_{j=1}^{i} h \circ \psi^{j}
                   - \sum_{j=0}^{i-1} h \circ \psi^{j} \Big) \\
        &= h + \frac{1}{n} \sum_{i=1}^{n-1} \big( h \circ \psi^{i} - h \big) \\
        &= h + \frac{1}{n} \sum_{i=0}^{n-1} \big( h \circ \psi^{i} - h \big) \\
        &= \frac{1}{n} \sum_{i=0}^{n-1} h \circ \psi^{i}.
\end{align*}
\end{proof}

\begin{PP}
\label{3.10}
Let $N$ be a closed manifold, $\alpha$ a contact form on $N$, and $\psi$ a contactomorphism such that $\psi^* \alpha = e^h \alpha$, with $h \in C^\infty(N)$.
For every $k \in \mathcal{A}_{(\alpha,\psi)}$, there exists a rank~$1$ exact LCS pair $(\lambda,\eta)$ on $N_{(\alpha,\psi,k)}$ such that \(\eta \wedge \lambda \wedge d\lambda^{\,n-1}\) is nonvanishing.
\end{PP}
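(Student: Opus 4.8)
The statement asks: for any $k \in \mathcal{A}_{(\alpha,\psi)}$, exhibit a rank-$1$ exact LCS pair $(\lambda,\eta)$ on $N_{(\alpha,\psi,k)}$ with $\eta \wedge \lambda \wedge d\lambda^{n-1}$ nonvanishing. The natural strategy is to go back to the construction in Definition~\ref{construction tore d'application l.c.s.}: the LCS mapping torus of size $k$ is a quotient of $(N \times \mathbb{R},\, d(e^t p_1^*\alpha))$ by the $\mathbb{Z}$-action generated by $\rho_{(\psi,k-h)}$, and by Proposition~\ref{2.6} every choice of a function $\mu \in C^\infty(N\times\mathbb{R})$ with $\rho_{(\psi,k-h)}^*\mu = \mu - k$ produces an exact LCS pair via $\pi^*\lambda = e^{t+\mu}p_1^*\alpha$, $\pi^*\eta = d\mu$. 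So the freedom is entirely in the choice of $\mu$, and I want to choose $\mu$ so that the descended pair satisfies the nonvanishing of $\eta \wedge \lambda \wedge d\lambda^{n-1}$.

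By Lemma~\ref{etaZnonzero}, $\eta \wedge \lambda \wedge d\lambda^{n-1}$ is nonvanishing iff $\eta(Z_\lambda)$ is nonvanishing, and by Proposition~\ref{image élasic} this amounts to $0 \notin E(\lambda,\eta)^c$, i.e.\ $\eta(Z_\lambda) \neq -1$ everywhere — equivalently, by the formula $\iota_{Z_\lambda} d\lambda = (1+\eta(Z_\lambda))\lambda$ upstairs, that $1 + \partial_t(t+\mu) = 1 + \partial_t\mu \neq 0$ nowhere, i.e.\ $\partial_t\mu \neq -1$ everywhere. Hence the whole problem reduces to: \emph{find $g \in C^\infty(N\times\mathbb{R})$ with $\partial_t g$ nowhere equal to $-1$ and $g(\psi(x),t+1) = g(x,t) - h(x)$} (writing $\mu(x,t) = g(x,t) - t$ would shift things; more directly one wants $\mu$ with $\partial_t\mu \neq -1$ and the cocycle relation $\rho_{(\psi,k-h)}^*\mu = \mu - k$, which unwinds to $\mu(\psi(x),t+k-h(x)) = \mu(x,t) - k$). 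After a reparametrisation in the $t$-variable this is exactly the conclusion of Lemma~\ref{3.5}, \emph{provided} we know $k \notin \mathrm{Im}(h)$. In general $k$ need not avoid $\mathrm{Im}(h)$, so the key extra input is Lemma~\ref{3.8}: since $k \in \mathcal{A}_{(\alpha,\psi)}$, there is some $l := l_1 \geq 1$ with $k \notin \mathrm{Im}(A_l(h))$. By Lemma~\ref{3.9}, $A_l(h) = h + f_l\circ\psi - f_l$ for a suitable $f_l \in C^\infty(N)$, so replacing the contact form $\alpha$ by $e^{f_l}\alpha$ (which changes the conformal factor of $\psi$ to $h + f_l\circ\psi - f_l = A_l(h)$ and, by Lemma~\ref{castafior}, changes neither $\mathcal{A}_{(\alpha,\psi)}$ nor the isomorphism class of the mapping torus) we reduce to the case where the conformal factor avoids $k$ in its image. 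Then Lemma~\ref{3.5} applies directly.

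So the steps, in order, are: (1) fix $k \in \mathcal{A}_{(\alpha,\psi)}$, invoke Lemma~\ref{3.8} to get $l \geq 1$ with $k \notin \mathrm{Im}(A_l(h))$; (2) write $A_l(h) = h + f_l\circ\psi - f_l$ by Lemma~\ref{3.9}, and replace $\alpha$ by $e^{f_l}\alpha$, using Lemma~\ref{castafior} to see this is harmless; (3) now the conformal factor $h' := A_l(h)$ satisfies $k \notin \mathrm{Im}(h')$, so Lemma~\ref{3.5} yields $g \in C^\infty(N\times\mathbb{R})$ with $\partial_t g + k$ nonvanishing and $g(\psi(x),t+1) = g(x,t) - h'(x)$; (4) translate $g$ into the normalisation of Definition~\ref{construction tore d'application l.c.s.}: set $\mu := g + $ (a linear term in $t$ chosen so that $\partial_t\mu \neq -1$ and $\rho_{(\psi',k-h')}^*\mu = \mu - k$), so by Proposition~\ref{2.6} the pair $\pi^*\lambda = e^{t+\mu}p_1^*\alpha'$, $\pi^*\eta = d\mu$ is an exact LCS pair on $N_{(\alpha',\psi,k)} \cong N_{(\alpha,\psi,k)}$; (5) check $\partial_t\mu \neq -1$ upstairs, so by $\iota_{Z_\lambda}d\lambda = (1+\eta(Z_\lambda))\lambda$, Lemma~\ref{etaZnonzero} and Proposition~\ref{image élasic}, conclude $\eta\wedge\lambda\wedge d\lambda^{n-1}$ is nonvanishing; it is rank $1$ by construction.

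\textbf{Main obstacle.} The genuinely substantive point is step~(1)–(3): \emph{why} one may always arrange the conformal factor to avoid $k$ in its image. This is where the hypothesis $k \in \mathcal{A}_{(\alpha,\psi)}$ is used in an essential way, through the compactness of $N$ and the properly-discontinuous-action argument of Lemma~\ref{3.8}; if $k$ were in the image of every Birkhoff average $A_n(h)$ one could build a sequence of fixed-average points whose orbit accumulates, contradicting proper discontinuity. The rest — translating between the cocycle normalisations of Lemma~\ref{3.5} and Definition~\ref{construction tore d'application l.c.s.}, and reading off the nonvanishing of $\eta(Z_\lambda)$ from $\partial_t\mu \neq -1$ — is bookkeeping that the earlier lemmas have already set up. A secondary care point is keeping track, in step~(4), of the exact affine term added to $g$ so that both the quasi-periodicity $\rho^*\mu = \mu - k$ \emph{and} the inequality $\partial_t\mu \neq -1$ hold simultaneously; one should check these are compatible (they are, since $\partial_t g + k$ has a constant sign, so adding $-(1-\text{sign factor})t$-type corrections preserves a sign condition on $\partial_t\mu + 1$).
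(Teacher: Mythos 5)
Your steps (1)--(3) are exactly the paper's: use Lemma~\ref{3.8} to find $n$ with $k \notin \mathrm{Im}(A_n(h))$, Lemma~\ref{3.9} to write $A_n(h) = h + f_n\circ\psi - f_n$, and Lemma~\ref{3.5} to produce $g$ with $\partial_t g + k$ nonvanishing and $g(\psi(x),t+1) = g(x,t) - A_n(h)(x)$. The gap is in your steps (4)--(5). First, the passage from $g$ to $\mu$ is not an additive correction ``$\mu := g + $ a linear term in $t$'': the cocycle satisfied by $g$ is relative to the unit-shift action $\rho_{(\psi,1)}$, whereas the required relation $\rho_{(\psi,k-h)}^*\mu = \mu - k$ is relative to a different action, and no function of the form $g + ct$ satisfies it. The paper's mechanism is the fibered map $\overline{\sigma}_k(x,t) = (x,\, g(x,t) + tk + f_n(x))$, which is a diffeomorphism precisely because $\partial_t g + k$ is nonvanishing (this is where that hypothesis is actually spent) and which intertwines $\rho_{(\psi,1)}$ with $\rho_{(\psi,k-h)}$; one then sets $\mu := -k\, t\circ\overline{\sigma}_k^{-1}$, which is an implicitly defined reparametrisation of $t$, not $g$ plus a linear term. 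This conjugation is the substantive construction of the proof and your write-up skips it.

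Second, your criterion for the conclusion is wrong. By Lemma~\ref{etaZnonzero}, $\eta\wedge\lambda\wedge d\lambda^{n-1}$ is nonvanishing iff $\eta(Z_\lambda)$ is nonvanishing; since $Z_\lambda$ lifts to the Liouville field $\partial_t$ of $e^t p_1^*\alpha$ and $\pi^*\eta = d\mu$, this reads $\partial_t\mu \neq 0$, not $\partial_t\mu \neq -1$. The condition $\eta(Z_\lambda)\neq -1$ (equivalently $1+\partial_t\mu\neq 0$) that you extract from $\iota_{Z_\lambda}d\lambda = (1+\eta(Z_\lambda))\lambda$ is the nondegeneracy of $d\lambda$, i.e.\ $0\in E(\lambda,\eta)$ --- a different statement that does not yield the proposition. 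With the correct $\mu$ one has $\partial_t\mu = -k/(\partial_t g + k)$, which is automatically nonzero once $\overline{\sigma}_k$ is a diffeomorphism and $k\neq 0$; the paper instead verifies the claim by computing $\pi^*(\eta\wedge\lambda\wedge d\lambda^{n-1}) = -k\,e^{n(t-kt\circ\overline{\sigma}_k^{-1})}\, d(t\circ\overline{\sigma}_k^{-1})\wedge p_1^*(\alpha\wedge d\alpha^{n-1})$, which is nonvanishing as the pullback of a volume form under a diffeomorphism. So the architecture of your argument is right, but as written the last two steps would not compile into a proof.
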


\begin{proof}
For every $k \in \mathcal{A}_{(\alpha,\psi)}$, by Lemmas~\ref{3.8} and \ref{3.9}, there exist 
$n \in \mathbb{N}_0$ and $f_n \in C^\infty(N)$ such that 
$k \notin \mathrm{Im}(A_n(h))$ and 
$\psi^*(e^{f_n}\alpha) = e^{A_n(h)} (e^{f_n}\alpha)$.
Thus, by Lemma~\ref{3.5}, one can construct a function 
$g \in C^\infty(N \times \mathbb{R})$ such that 
$\partial_t g + k$ is nonvanishing and \(g(\psi(x), t+1) = g(x,t) - A_n(h)(x)\) for all $(x,t) \in N \times \mathbb{R}$.
This allows us to construct a diffeomorphism
\[
    \overline{\sigma}_k : N \times \mathbb{R} \to N \times \mathbb{R} : (x,t) \mapsto (x, g(x,t) + tk + f_n(x)).
\]
Which satisfies, for every $(x,t) \in N \times \mathbb{R}$,
\begin{align*}
    \overline{\sigma}_k \circ \rho_{(\psi,1)}(x,t)
        &= \overline{\sigma}_k(\psi(x), t+1) \\
        &= (\psi(x), g(\psi(x),t+1) + (t+1)k + f_n \circ \psi(x)) \\
        &= (\psi(x), g(x,t) - A_n(h)(x) + f_n \circ \psi(x) + (t+1)k) \\
        &= (\psi(x), g(x,t) + f_n(x) - h(x) + (t+1)k) \\
        &= \rho_{(\psi,k-h)}(x, g(x,t) + tk + f_n(x)) \\
        &= \rho_{(\psi,k-h)} \circ \overline{\sigma}_k(x,t).
\end{align*}
Moreover, since $\rho_{(\psi,1)}^*(-t) = -t - 1$, if we set 
$\mu := -k\, t \circ \overline{\sigma}_k^{-1}$, then 
$\rho_{(\psi,k-h)}^* \mu = \mu - k$.
By Lemma~\ref{chapeaux pointu}, we then have that $\pi^* \lambda = e^{t+\mu} p_1^* \alpha$ and $\pi^* \eta = d\mu$ define a rank~$1$ exact LCS pair $(\lambda,\eta)$ on $N_{(\alpha,\psi,k)}$, where 
$\pi : N \times \mathbb{R} \to N_{(\alpha,\psi,k)}$ is the usual projection onto the quotient of $N \times \mathbb{R}$ by the $\mathbb{Z}$-action generated by $\rho_{(\psi,k-h)}$.
\smallskip

Since $\alpha$ is a contact form, 
$dt \wedge p_1^*(\alpha \wedge d\alpha^{n-1})$ is nonvanishing, and since 
$p_1 \circ \overline{\sigma}_k = p_1$, it follows that
\(
    \overline{\sigma}_k^{-1}{}^* dt \wedge p_1^*(\alpha \wedge d\alpha^{n-1})
    = d(t \circ \overline{\sigma}_k^{-1}) \wedge 
      p_1^*(\alpha \wedge d\alpha^{n-1})
\)
is also nonvanishing.
Moreover,
\[
    \pi^*(\eta \wedge \lambda \wedge d\lambda^{n-1})
        = -k\, e^{\,n(t - k t \circ \overline{\sigma}_k^{-1})}\,
          d(t \circ \overline{\sigma}_k^{-1}) \wedge 
          p_1^*(\alpha \wedge d\alpha^{n-1}).
\]
Thus $\eta \wedge \lambda \wedge d\lambda^{n-1}$ is indeed nonvanishing.
\end{proof}

\begin{TT}
\label{3.12}
A closed exact LCS manifold $(M,[(\lambda,\eta)])$ of rank~$1$ is exactly conformally symplectomorphic to the LCS mapping torus of a closed contact manifold if and only if there exists an exact LCS pair $(\lambda,\eta)$ generating the structure on $M$ such that $E(\lambda,\eta)^{c}$ is bounded.
\end{TT}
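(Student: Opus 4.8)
The plan is to deduce both implications from a single equivalence, valid because $M$ is closed: an exact LCS pair $(\lambda,\eta)$ on $M$ satisfies ``$E(\lambda,\eta)^{c}$ bounded'' if and only if $\eta\wedge\lambda\wedge d\lambda^{\,n-1}$ is nonvanishing. Granting this, the implication from bounded complementary elasticity to ``LCS mapping torus'' will follow from Theorem~\ref{géné b-m} (completeness of $Z_\lambda$ being automatic on a closed manifold) together with a compactness argument for the leaf it produces; the converse will follow by applying Proposition~\ref{3.10} on the mapping torus and transporting the resulting pair back to $M$ along the given exact conformal symplectomorphism.

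To establish the equivalence I would fix a volume form on $M$, write $d\lambda^{\,n}=a\,\mathrm{vol}$ and $\eta\wedge\lambda\wedge d\lambda^{\,n-1}=b\,\mathrm{vol}$, and use that $(\eta\wedge\lambda)^{2}=0$ to get $(d_{c\eta}\lambda)^{\,n}=(a-cnb)\,\mathrm{vol}$; thus $c\notin E(\lambda,\eta)$ precisely when $a(x)=cnb(x)$ for some $x\in M$. If $b$ is nonvanishing then so is $\lambda$, and Proposition~\ref{image élasic} together with Lemma~\ref{etaZnonzero} identify $E(\lambda,\eta)^{c}$ with the image of the continuous function $(1+\eta(Z_\lambda))/\eta(Z_\lambda)$, which is bounded since $M$ is compact. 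For the converse I would argue by contrapositive: if $b(p)=0$, then either $a(p)=0$, in which case $(d_{c\eta}\lambda)^{\,n}$ vanishes at $p$ for every $c$ and $E(\lambda,\eta)=\emptyset$; or $a(p)\neq0$ and $p\in\overline{\{b\neq0\}}$, in which case a sequence $x_{j}\to p$ with $b(x_{j})\neq0$ yields $a(x_{j})/(nb(x_{j}))\in E(\lambda,\eta)^{c}$ with $|a(x_{j})/(nb(x_{j}))|\to\infty$; or $b$ vanishes identically near $p$, in which case $W:=\mathrm{int}\{b=0\}$ is a nonempty open set with $W\neq M$, the latter because otherwise $d\lambda^{\,n}=(d_\eta\lambda)^{\,n}$ would be an exact nowhere-zero top form on a closed manifold. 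Then $\partial W\neq\emptyset$ and each of its points is a zero of $b$ lying in $\overline{\{b\neq0\}}$, so one of the first two cases applies there; in all cases $E(\lambda,\eta)^{c}$ is unbounded. This final point-set case analysis (handling open regions on which $\eta\wedge\lambda\wedge d\lambda^{\,n-1}$ vanishes) is the step I expect to demand the most care.

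For the implication ``$E(\lambda,\eta)^{c}$ bounded $\Rightarrow$ LCS mapping torus'': the equivalence gives that $\eta\wedge\lambda\wedge d\lambda^{\,n-1}$, hence also $\eta$, is nonvanishing, so $\mathcal{F}_\eta$ is defined; $Z_\lambda$ is complete because $M$ is closed, so Theorem~\ref{géné b-m} produces an exact conformal symplectomorphism onto $N_{(\lambda|_{N},\psi,k)}$ for a leaf $N\in\mathcal{F}_\eta$, with $\lambda|_{N}$ contact (restrict $\iota_{U}(\eta\wedge\lambda\wedge d\lambda^{\,n-1})$ to $\ker\eta$, where $\eta(U)=1$) and $\psi$ a contactomorphism. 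It remains to see $N$ is closed: since the LCS rank is $1$, the finitely generated group $\mathrm{Im}(\mathrm{Per}_{[\eta]})$ is infinite cyclic, generated by some $k_{0}>0$, so that $\tfrac1{k_{0}}\eta$ has integral periods and hence equals $f^{*}d\theta$ for a smooth map $f\colon M\to\mathbb{S}^{1}$; as $\eta$ is nonvanishing, $f$ is a submersion, hence, $M$ being closed, a fibre bundle over $\mathbb{S}^{1}$ whose fibres are unions of leaves of $\mathcal{F}_\eta$, so that $N$ is a connected component of a compact fibre, hence compact. Thus $(M,[(\lambda,\eta)])$ is exactly conformally symplectomorphic to the LCS mapping torus of the closed contact manifold $(N,\ker\lambda|_{N})$.

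For the converse, suppose $\Phi$ is an exact conformal symplectomorphism from $(M,[(\lambda,\eta)])$ onto the LCS mapping torus $N_{(\alpha,\psi,k)}$ of a closed contact manifold $(N,\xi)$, with $\alpha$ a contact form for $\xi$, $\psi^{*}\alpha=e^{h}\alpha$ and $k\in\mathcal{A}_{(\alpha,\psi)}$. By Proposition~\ref{3.10}, $N_{(\alpha,\psi,k)}$ carries an exact LCS pair $(\lambda',\eta')$ generating its structure with $\eta'\wedge\lambda'\wedge d\lambda'^{\,n-1}$ nonvanishing. Then $(\Phi^{*}\lambda',\Phi^{*}\eta')$ is an exact LCS pair on $M$ ($\Phi^{*}\eta'$ closed, $d_{\Phi^{*}\eta'}\Phi^{*}\lambda'=\Phi^{*}(d_{\eta'}\lambda')$ nondegenerate) which, by definition of an exact conformal symplectomorphism, is exactly conformally equivalent to $(\lambda,\eta)$ and so generates $[(\lambda,\eta)]$; moreover $\Phi^{*}(\eta'\wedge\lambda'\wedge d\lambda'^{\,n-1})$ is again nonvanishing. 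Applying the equivalence above to this pair on the closed manifold $M$ shows $E(\Phi^{*}\lambda',\Phi^{*}\eta')^{c}$ is bounded, which is exactly what the statement requires.
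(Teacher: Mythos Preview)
Your proof is correct and follows the same strategy as the paper: reduce to the equivalence between bounded $E(\lambda,\eta)^{c}$ and nonvanishing $\eta\wedge\lambda\wedge d\lambda^{\,n-1}$ (the paper simply cites Lemma~\ref{etaZnonzero} and Proposition~\ref{image élasic}), then invoke Theorem~\ref{géné b-m} for one direction and Proposition~\ref{3.10} pulled back along the given exact conformal symplectomorphism (cf.\ Remark~\ref{rk 12}) for the other. You supply more detail than the paper on the contrapositive of the equivalence and on the compactness of the leaf $N$ via the integral-period fibration over $\mathbb{S}^{1}$; note incidentally that your Case~1 is vacuous, since $(d_{\eta}\lambda)^{n}=(a-nb)\,\mathrm{vol}$ being nonvanishing forces $a(p)\neq 0$ whenever $b(p)=0$.
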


\begin{proof}
Since $M$ is closed, by Lemma~\ref{etaZnonzero} and Proposition~\ref{image élasic}, an exact LCS pair $(\lambda,\eta)$ satisfies 
$\eta \wedge \lambda \wedge d\lambda^{n}$ is nonvanishing if and only if 
$E(\lambda,\eta)^{c}$ is bounded.
Thus, by Theorem~\ref{géné b-m}, a closed exact LCS manifold $(M,[(\lambda,\eta)])$ of rank~$1$ admitting an exact LCS pair $(\lambda,\eta)$ such that $E(\lambda,\eta)^{c}$ is bounded is exactly conformally symplectomorphic to the LCS mapping torus of a closed contact manifold.
\smallskip

Conversely, if there exist a closed manifold $N$, a contact form $\alpha$, a contactomorphism 
$\psi \in \mathrm{Cont}(N,\alpha)$, and $k \in \mathbb{R}_0$ such that 
$(M,[(\lambda,\eta)])$ is exactly conformally symplectomorphic to the LCS mapping torus 
$N_{(\alpha,\psi,k)}$, then by Remark~\ref{rk 12} and Proposition~\ref{3.10}, there exists an exact LCS pair $(\lambda,\eta)$ generating the structure such that $\eta$ has rank~$1$ and $E(\lambda,\eta)^{c}$ is bounded.
\end{proof}
\subsection{Set of admissible values}

\begin{LL}
\label{rutabaga}
Let $N$ be a closed manifold, $\alpha$ a contact form on $N$, and $\psi$ a contactomorphism satisfying $\psi^*\alpha = e^h \alpha$, with $h \in C^\infty(N)$. 
For every $k \in \mathbb{R}$, if there exists $n \in \mathbb{N}_0$ such that $k \notin \mathrm{Im}(A_n(h))$, then $k \in \mathcal{A}_{(\alpha,\psi)}$.
\end{LL}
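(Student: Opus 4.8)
The plan is to conjugate the generating diffeomorphism so that its translation part becomes strictly monotone along the $\mathbb{R}$-factor, which makes proper discontinuity transparent; the real content is spotting this reduction rather than any hard estimate.

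First I would apply Lemma~\ref{3.9} with the given $n$ to get $f_n \in C^\infty(N)$ with $A_n(h) = h + f_n\circ\psi - f_n$. Exactly as in the proof of Lemma~\ref{castafior}, the diffeomorphism $\overline{\varphi} : N\times\mathbb{R}\to N\times\mathbb{R}$, $(x,t)\mapsto(x,t-f_n(x))$, then satisfies
$\overline{\varphi}\circ\rho_{(\psi,k-h)} = \rho_{(\psi,\,k-h-f_n\circ\psi+f_n)}\circ\overline{\varphi} = \rho_{(\psi,\,k-A_n(h))}\circ\overline{\varphi}$.
Since proper discontinuity of a $\mathbb{Z}$-action is preserved under conjugation by a diffeomorphism, it suffices to show that the $\mathbb{Z}$-action generated by $\sigma := \rho_{(\psi,\,k-A_n(h))}$ is properly discontinuous. (This step is morally just the identity $\mathcal{A}_{(\alpha,\psi)} = \mathcal{A}_{(e^{f_n}\alpha,\psi)}$ of Lemma~\ref{castafior}, the conformal factor of $\psi$ with respect to $e^{f_n}\alpha$ being $A_n(h)$, so one could also phrase it that way.)

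Next I would feed in the hypothesis $k\notin\mathrm{Im}(A_n(h))$: the continuous function $k-A_n(h)$ on the connected compact manifold $N$ is nonvanishing, hence of constant sign, and by compactness $|k-A_n(h)|\geq\delta$ on $N$ for some $\delta>0$. Replacing $\sigma$ by $\sigma^{-1}$ if necessary (it generates the same action), assume $k-A_n(h)\geq\delta>0$. Then for $m\geq 1$ one has $\sigma^m(x,t) = \bigl(\psi^m(x),\, t + \sum_{i=0}^{m-1}(k-A_n(h))(\psi^i(x))\bigr)$, whose $\mathbb{R}$-coordinate is at least $t+m\delta$, and that of $\sigma^{-m}(x,t)$ is at most $t-m\delta$. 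Consequently, given a compact $K\subseteq N\times\mathbb{R}$ contained in $N\times[-R,R]$, the relation $\sigma^m(K)\cap K\neq\emptyset$ with $m\neq 0$ forces $|m|\delta\leq 2R$, so only finitely many $m$ occur; hence $\sigma$, and therefore $\rho_{(\psi,k-h)}$, generates a properly discontinuous $\mathbb{Z}$-action, i.e.\ $k\in\mathcal{A}_{(\alpha,\psi)}$. The same monotonicity shows $\sigma^m$ is fixed-point-free for $m\neq 0$, so the action is free as well if that is part of the intended meaning. I do not expect a genuine obstacle here: the only points requiring care are that $\overline{\varphi}$ is an honest diffeomorphism of $N\times\mathbb{R}$ along which proper discontinuity transfers, and the use of connectedness of $N$ (the standing assumption of this section) to upgrade non-vanishing of $k-A_n(h)$ into a uniform sign bound.
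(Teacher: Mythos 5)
Your proof is correct, but it takes a different route from the paper's. Both arguments begin by applying Lemma~\ref{3.9} to replace the conformal factor $h$ by $A_n(h)$ (you do this via the explicit fiberwise translation $(x,t)\mapsto(x,t-f_n(x))$, which is exactly the mechanism of Lemma~\ref{castafior}). From there the paper invokes Lemma~\ref{3.5} to build a function $g$ on $N\times\mathbb{R}$ with $\partial_t g + k$ nonvanishing, assembles from it a diffeomorphism $\overline{\sigma}_k$ conjugating the standard properly discontinuous action $\rho_{(\psi,1)}$ into $\rho_{(\psi,k-h)}$, and concludes by transport of proper discontinuity. You instead prove proper discontinuity of $\rho_{(\psi,k-A_n(h))}$ directly: since $k-A_n(h)$ is nonvanishing on the connected closed manifold $N$, it is bounded away from zero with constant sign, so every iterate displaces the $\mathbb{R}$-coordinate by at least $m\delta$, and the compact-intersection and freeness conditions follow at once. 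Your displacement estimate is the more elementary argument and avoids the cutoff-function construction of Lemma~\ref{3.5} entirely; what the paper's heavier construction buys is the explicit conjugating diffeomorphism $\overline{\sigma}_k$ and the function $\mu=-k\,t\circ\overline{\sigma}_k^{-1}$, which are reused verbatim in Proposition~\ref{3.10} and in the openness of $\mathcal{A}_{(\alpha,\psi)}$ to produce the exact LCS pair with $\eta\wedge\lambda\wedge d\lambda^{n-1}$ nonvanishing — data your argument does not provide, but which the statement of this lemma does not require.
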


\begin{proof}
By Lemmas~\ref{3.5}, \ref{3.9}, and the proof of Proposition~\ref{3.10}, if there exists 
$n \in \mathbb{N}_0$ such that $k \notin \mathrm{Im}(A_n(h))$, one can construct a diffeomorphism 
$\overline{\sigma}_k \in \mathrm{Diff}(N \times \mathbb{R})$ which makes the $\mathbb{Z}$-actions generated by 
$\rho_{(\psi,1)}$ and $\rho_{(\psi,k-h)}$ commute.
Since the $\mathbb{Z}$-action generated by $\rho_{(\psi,1)}$ is properly discontinuous, this implies that 
$k \in \mathcal{A}_{(\alpha,\psi)}$.
\end{proof}

\begin{LL}
Let $N$ be a closed manifold, $\alpha$ a contact form on $N$, and $\psi$ a contactomorphism satisfying $\psi^*\alpha = e^h \alpha$, with $h \in C^\infty(N)$. 
The set of admissible values $\mathcal{A}_{(\alpha,\psi)}$ is an open subset of $\mathbb{R}_0$.
\end{LL}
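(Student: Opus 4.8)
The plan is to prove the two halves of the statement separately: openness in $\mathbb{R}$, and the fact that $0 \notin \mathcal{A}_{(\alpha,\psi)}$.

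For the second claim, I would argue that the $\mathbb{Z}$-action generated by $\rho_{(\psi,0-h)}=\rho_{(\psi,-h)}$ is never properly discontinuous. Indeed, taking $k=0$ and $n=1$ in the iteration formula from Lemma~\ref{3.8}, we get $\rho_{(\psi,-h)}^n(x,t)=(\psi^n(x),\,t-nA_n(h)(x))$. Since $N$ is closed, $A_1(h)=h$ attains a maximum and a minimum; if $h$ changes sign this already gives trouble, but more robustly one can invoke Lemma~\ref{3.8} in the contrapositive form: if $0\in\mathcal{A}_{(\alpha,\psi)}$ then for every $n$ there is $l_n\geq n$ with $0\notin\mathrm{Im}(A_{l_n}(h))$. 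The cleanest route is simply to note that $A_n(h)(x)$ has a zero for some $x$ whenever $0$ lies between $\min h$ and $\max h$; and if $h>0$ everywhere (or $h<0$ everywhere) on a closed manifold, then by compactness $h\geq \delta>0$, so $-nA_n(h)\leq -n\delta\to-\infty$ uniformly, forcing the orbit of any compact set to escape — but escaping to infinity in the $\mathbb{R}$-factor is exactly what makes the action proper in that degenerate case, so one must be careful. Rather than chase this, the efficient argument is: by Lemma~\ref{3.8}, $0\in\mathcal{A}_{(\alpha,\psi)}$ would require $0\notin\mathrm{Im}(A_{l_n}(h))$ for arbitrarily large $l_n$; but conversely, if every $A_n(h)$ has a zero we contradict properness via the subsequence argument of Lemma~\ref{3.8}. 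So the real content is: either $h$ is identically of one sign, in which case one checks directly that $\rho_{(\psi,-h)}$ generates a proper action — wait, that would put $0\in\mathcal{A}$. I therefore expect the true statement relies on the fact that a mapping torus of size $0$ degenerates (the resulting $2$-form $d_{0\cdot\eta}\lambda=d\lambda$ is the pullback of an exact symplectic form on a closed manifold, impossible), i.e. $0\notin\mathcal{A}_{(\alpha,\psi)}$ follows because for $k=0$ the construction in Definition~\ref{construction tore d'application l.c.s.} would yield $\rho_{(\psi,-h)}^*e^tp_1^*\alpha=e^0e^tp_1^*\alpha$, so the quotient, if the action were proper, would be a closed exact symplectic manifold — contradiction. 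That is the argument I would write: \emph{if} $0$ were admissible, $N\times\mathbb{R}/\langle\rho_{(\psi,-h)}\rangle$ would be a closed manifold carrying $d(e^tp_1^*\alpha)$ as an exact symplectic form, which is impossible.

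For openness: fix $k\in\mathcal{A}_{(\alpha,\psi)}$. By Lemma~\ref{3.8} there is some $n\in\mathbb{N}_0$ with $k\notin\mathrm{Im}(A_n(h))$; equivalently, since $N$ is closed, $A_n(h)$ is continuous on a compact set, so $\mathrm{Im}(A_n(h))$ is a compact subinterval of $\mathbb{R}$ not containing $k$, hence $k$ has a whole neighbourhood $V$ disjoint from $\mathrm{Im}(A_n(h))$. For every $k'\in V$ we then have $k'\notin\mathrm{Im}(A_n(h))$, so by Lemma~\ref{rutabaga} $k'\in\mathcal{A}_{(\alpha,\psi)}$. Thus $V\subseteq\mathcal{A}_{(\alpha,\psi)}$, proving $\mathcal{A}_{(\alpha,\psi)}$ is open. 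Combining with $0\notin\mathcal{A}_{(\alpha,\psi)}$ gives that $\mathcal{A}_{(\alpha,\psi)}$ is an open subset of $\mathbb{R}_0$.

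The main obstacle is the handling of $0\notin\mathcal{A}_{(\alpha,\psi)}$ in the degenerate situation where $h$ does not change sign: the counting/Birkhoff-average argument of Lemma~\ref{3.8} only produces the exclusion of values inside $\bigcap_n\overline{\bigcup_{i\geq n}\mathrm{Im}(A_i(h))}$, which may not obviously contain $0$. The clean resolution is the symplectic obstruction: no closed manifold admits an exact symplectic form, and $k=0$ is precisely the value for which the twisting constant in $\rho_{(\psi,-h)}^*e^tp_1^*\alpha=e^k e^tp_1^*\alpha$ is trivial, so a proper action at $k=0$ would manufacture such a manifold. I would use this as the backbone of the $0\notin\mathcal{A}_{(\alpha,\psi)}$ step, keeping the Birkhoff-average machinery only for openness.
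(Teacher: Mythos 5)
Your openness argument is correct and is in fact more direct than the paper's. The paper does not use Lemma~\ref{rutabaga} here: instead it takes, for each $k \in \mathcal{A}_{(\alpha,\psi)}$, the exact LCS pair $(\lambda_k,\eta_k)$ built in Proposition~\ref{3.10}, computes $\tfrac{\eta_k(Z_{\lambda_k})+1}{\eta_k(Z_{\lambda_k})} = \tfrac{\partial_t g_k}{-k}$, and shows that $\mathcal{A}_{(\alpha,\psi)} = \bigcup_{k}\{ck \mid c \in E(\lambda_k,\eta_k)\}$, so that openness follows from openness of the elasticity. That route costs more work but produces the dictionary between admissible values and elasticities that the surrounding results exploit; your route (Lemma~\ref{3.8} gives some $l$ with $k \notin \mathrm{Im}(A_l(h))$, compactness of $\mathrm{Im}(A_l(h))$ gives a whole neighbourhood of $k$ avoiding it, and Lemma~\ref{rutabaga} puts that neighbourhood inside $\mathcal{A}_{(\alpha,\psi)}$) proves openness with strictly less machinery and is perfectly valid.

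The exclusion of $0$ is where the gap lies. Your final argument --- ``if $0$ were admissible, the quotient would be a closed exact symplectic manifold'' --- silently assumes that $(N\times\mathbb{R})/\langle\rho_{(\psi,-h)}\rangle$ is \emph{compact}. A properly discontinuous $\mathbb{Z}$-action on $N\times\mathbb{R}$ does not have compact quotient for free; it does here, but only after one checks that the translation numbers $n\,(k-A_n(h)(x))$ tend to $+\infty$ in one direction of $n$ and to $-\infty$ in the other, uniformly in $x$ (using connectedness of $N$ and the identity relating the $(-n)$-th translation number at $x$ to the $n$-th at $\psi^{-n}(x)$), which you never do. The fact you were missing, and which dissolves all the sign-of-$h$ casework you agonize over, is that on a \emph{closed} contact manifold the conformal factor can never avoid $0$: since $\psi^{m*}\bigl(\alpha\wedge d\alpha^{\,n-1}\bigr) = e^{nS_m(h)}\,\alpha\wedge d\alpha^{\,n-1}$ and $\psi^m$ preserves orientation, integrating over $N$ gives $\int_N \bigl(e^{nS_m(h)}-1\bigr)\,\alpha\wedge d\alpha^{\,n-1} = 0$, so $S_m(h)$, hence $A_m(h)$, vanishes somewhere for every $m$. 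Thus $0 \in \mathrm{Im}(A_m(h))$ for all $m$, and the contrapositive of Lemma~\ref{3.8} gives $0 \notin \mathcal{A}_{(\alpha,\psi)}$ with no quotient, no compactness of the mapping torus, and no case distinction. In particular your worry that ``$h$ everywhere of one sign'' might make $0$ admissible is vacuous: such an $h$ does not exist on a closed contact manifold, for exactly this Stokes reason.
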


\begin{proof}
From the proof of Proposition~\ref{3.10}, for every $k \in \mathcal{A}_{(\alpha,\psi)}$ there exist functions 
$f_k \in C^\infty(N)$ and $g_k \in C^\infty(N \times \mathbb{R})$ such that
\(\overline{\sigma}_k(x,t) = (x, g_k(x,t) + tk + f_k(x))\)
defines a diffeomorphism of $N \times \mathbb{R}$ satisfying 
\(\overline{\sigma}_k \circ \rho_\psi = \rho_{(\psi,k-h)} \circ \overline{\sigma}_k\).
If we set $\mu_k = -k t \circ \overline{\sigma}_k^{-1}$, then 
\(\rho_{(\psi,k-h)}^* \mu_k = \mu_k - k\), and we construct an exact LCS pair $(\lambda_k,\eta_k)$ on 
$N_{(\alpha,\psi,k)}$ defined by $\pi_k^* \lambda_k = e^{t + \mu_k} p_1^* \alpha$ and $\pi_k^* \eta_k = d\mu_k$, where $\pi_k : N \times \mathbb{R} \to N_{(\alpha,\psi,k)}$ is the usual quotient projection.
\smallskip

Since $\mu_k \circ \overline{\sigma}_k = -kt$, we have \begin{align*}
    \partial_t(\mu_k \circ \overline{\sigma}_k) &= (\partial_t \mu_k)\, (\partial_t (t \circ \overline{\sigma}_k)) \\
    &= (\partial_t \mu_k)(\partial_t g_k + k) \\
    &= -k.
\end{align*}
Hence,
\begin{align*}
    \frac{\eta_k(Z_{\lambda_k}) + 1}{\eta_k(Z_{\lambda_k})}
        = \frac{\partial_t \mu_k + 1}{\partial_t \mu_k}
        = \frac{\partial_t g_k}{-k}.
\end{align*}
Since $\lambda_k$ is nonvanishing, Proposition~\ref{image élasic} implies that 
$d_{c\eta_k}\lambda_k$ is nondegenerate if and only if $\partial_t g_k + ck$ is nonvanishing.
Moreover, if $\partial_t g_k + ck$ is nonvanishing, then \(\tilde{\sigma}_{ck}(x,t) = (x, g_k(x,t) + tck + f_k(x))\)
also defines a diffeomorphism of $N \times \mathbb{R}$ and satisfies
\(
    \tilde{\sigma}_{ck} \circ \rho_{(\psi,1)}
        = \rho_{(\psi,ck-h)} \circ \tilde{\sigma}_{ck}.
\)
\smallskip

The nondegeneracy of $d_{c\eta_k}\lambda_k$ therefore implies that the action generated by 
$\rho_{(\psi,ck-h)}$ is properly discontinuous.
Hence $\{ck \ | \ c \in E(\lambda_k,\eta_k)\} \subseteq \mathcal{A}_{(\alpha,\psi)}$ and $$ \mathcal{A}_{(\alpha,\psi)} = \bigcup_{k \in \mathcal{A}_{(\alpha,\psi)}} \{ck \ | \ c \in E(\lambda_k,\eta_k)\}.$$
Since $E(\lambda_k,\eta_k)$ is an open subset of $\mathbb{R}_0$, we conclude that 
$\mathcal{A}_{(\alpha,\psi)}$ is also open in $\mathbb{R}_0$.
\end{proof}

Let $N$ be a closed manifold, $\psi \in \mathrm{Diff}(N)$ a diffeomorphism, and $h \in C^\infty(N)$ a function.
For every $x \in N$, the sequence $(A_n(h)(x))_{n \in \mathbb{N}_0}$ is bounded in $[\min h, \max h]$ and therefore admits at least one accumulation value. We define
\[
    A_n^-(h)(x) := \inf_{i \geq n} A_i(h)(x)
    \ \ \text{and} \ \
    A_n^+(h)(x) := \sup_{i \geq n} A_i(h)(x).
\]
The sequences $(A_n^-(h)(x))_{n \in \mathbb{N}_0}$ and $(A_n^+(h)(x))_{n \in \mathbb{N}_0}$ are respectively increasing and decreasing, and satisfy for every $n \in \mathbb{N}_0$,
\[
    A_n^-(h)(x) \leq A_n(h)(x) \leq A_n^+(h)(x).
\]

\begin{PP}
\label{4.9}
Let $N$ be a closed manifold, $\alpha$ a contact form on $N$, and $\psi$ a contactomorphism satisfying $\psi^* \alpha = e^h \alpha$, with $h \in C^\infty(N)$.
The limits
\[
    \lim_{n \to +\infty} \inf_{x \in N} A^-_n(h)(x)
    \ \ \text{and} \ \
    \lim_{n \to +\infty} \sup_{x \in N} A^+_n(h)(x)
\]
do not depend on the choice of contact form.
\end{PP}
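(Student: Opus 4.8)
The plan is to reduce to the case of two contact forms defining the same contact structure and then track how the Birkhoff sums change. Any contact form with the same kernel as $\alpha$ is of the form $\alpha' = e^f \alpha$ for some $f \in C^\infty(N)$ (up to an irrelevant sign), and as in the proof of Lemma~\ref{castafior} one has $\psi^*\alpha' = e^{h'}\alpha'$ with $h' = h + f\circ\psi - f$. So it suffices to compare the two limits built from $h$ and from $h'$.

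First I would compute the Birkhoff partial sums. Telescoping,
\[
S_n(h') = \sum_{i=0}^{n-1}\bigl(h\circ\psi^i + f\circ\psi^{i+1} - f\circ\psi^i\bigr) = S_n(h) + f\circ\psi^n - f,
\]
so that $A_n(h') = A_n(h) + \tfrac1n(f\circ\psi^n - f)$. Since $N$ is closed, $f$ is bounded; writing $C := 2\sup_N|f|$ we obtain the uniform estimate $|A_n(h')(x) - A_n(h)(x)| \le C/n$ for all $x \in N$ and all $n \in \mathbb{N}_0$.

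Next I would propagate this estimate through the operations $A_n^\pm$ and through $\inf_{x}$, $\sup_x$. From $A_i(h)(x) - C/i \le A_i(h')(x) \le A_i(h)(x) + C/i$ and $1/i \le 1/n$ for $i \ge n$, taking the infimum over $i \ge n$ gives $|A_n^-(h')(x) - A_n^-(h)(x)| \le C/n$, and likewise $|A_n^+(h')(x) - A_n^+(h)(x)| \le C/n$, uniformly in $x$. Taking now the infimum (resp.\ supremum) over $x \in N$ yields
\[
\Bigl|\inf_{x\in N} A_n^-(h')(x) - \inf_{x\in N} A_n^-(h)(x)\Bigr| \le \frac Cn, \qquad \Bigl|\sup_{x\in N} A_n^+(h')(x) - \sup_{x\in N} A_n^+(h)(x)\Bigr| \le \frac Cn.
\]
The sequences involved converge: $\bigl(\inf_x A_n^-(h)(x)\bigr)_n$ is nondecreasing (each $A_n^-(h)(x)$ is nondecreasing in $n$) and bounded above, $\bigl(\sup_x A_n^+(h)(x)\bigr)_n$ is nonincreasing and bounded below, all values lying in $[\min h, \max h]$, and the same holds with $h'$ in place of $h$. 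Letting $n \to +\infty$ in the two displayed inequalities, the error terms vanish and the limits attached to $h$ and to $h'$ coincide.

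The argument is essentially bookkeeping; the only point requiring a little care is that the perturbation bound must survive the nested operations $\inf_{i\ge n}$ (resp.\ $\sup_{i\ge n}$) and then $\inf_{x}$ (resp.\ $\sup_x$) without loss, which is exactly why one wants the estimate $|A_n(h') - A_n(h)| \le C/n$ to hold uniformly in $x$ and to decay in $n$, rather than merely to hold for each fixed $n$.
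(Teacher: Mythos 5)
Your proof is correct and follows essentially the same route as the paper's: reduce to $h' = h + f\circ\psi - f$, telescope to get $A_n(h') = A_n(h) + \tfrac1n(f\circ\psi^n - f)$, and let the uniformly $O(1/n)$ perturbation die in the limit after passing through $\inf_{i\ge n}$/$\sup_{i\ge n}$ and $\inf_x$/$\sup_x$. The only cosmetic difference is that you package the error as a single uniform bound $C/n$ where the paper carries the terms $\inf_{j\ge n}\tfrac1j(f\circ\psi^j - f)$ and $\sup_{l\ge n}\tfrac1l(f\circ\psi^l - f)$ explicitly before bounding them.
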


\begin{proof}
We show that for every $f \in C^\infty(N)$,
\[
    \lim_{n\to +\infty} \inf_{x \in N} A_n^-(h)(x)
    = 
    \lim_{n\to +\infty} \inf_{x \in N} A_n^-(h + f \circ \psi - f)(x).
\]
First observe that for all $x \in N$, $n \in \mathbb{N}_0$, and $f \in C^\infty(N)$,
\begin{align*}
    A_n(h + f \circ \psi - f)(x)
        &= A_n(h)(x)
           + \frac{1}{n} \sum_{i=0}^{n-1} (f \circ \psi^{i+1}(x) - f \circ \psi^i(x)) \\
        &= A_n(h)(x) + \frac{1}{n} (f \circ \psi^{n}(x) - f(x)).
\end{align*}
Thus,
\begin{align*}
    A_n^-(h + f \circ \psi - f)(x)
        &= \inf_{i \geq n} A_i(h + f \circ \psi - f)(x) \\
        &= \inf_{i \geq n} A_i(h)(x)
           + \frac{1}{i} (f \circ \psi^{i}(x) - f(x)) \\
        &\geq \inf_{i \geq n} A_i(h)(x)
           + \inf_{j \geq n} \frac{1}{j} (f \circ \psi^{j}(x) - f(x)) \\
        &= A_n^-(h)(x)
           + \inf_{j \geq n} \frac{1}{j} (f \circ \psi^{j}(x) - f(x)),
\end{align*}
and
\begin{align*}
    A_n^-(h)(x)
        &= \inf_{i \geq n} A_i(h)(x) \\
        &= \inf_{i \geq n} A_i(h + f \circ \psi - f)(x)
           - \frac{1}{i} (f \circ \psi^{i}(x) - f(x)) \\
        &\geq \inf_{i \geq n} A_i(h + f \circ \psi - f)(x)
           - \sup_{l \geq n} \frac{1}{l} (f \circ \psi^{l}(x) - f(x)) \\
        &= A_n^-(h + f \circ \psi - f)(x)
           - \sup_{l \geq n} \frac{1}{l} (f \circ \psi^{l}(x) - f(x)).
\end{align*}
This yields the chain of inequalities
\begin{align*}
    A_n^-(h + f \circ \psi - f)(x)
        &\geq A_n^-(h)(x)
            + \inf_{j \geq n} \frac{1}{j}(f \circ \psi^{j}(x) - f(x))
        \\
        &\geq A_n^-(h + f \circ \psi - f)(x)
            - \sup_{l \geq n} \frac{1}{l}(f \circ \psi^{l}(x) - f(x))
            + \inf_{j \geq n} \frac{1}{j}(f \circ \psi^{j}(x) - f(x)).
\end{align*}
Moreover, for all $n \in \mathbb{N}_0$,
\[
    \frac{\min f - \max f}{n}
    \leq \frac{f \circ \psi^n(x) - f(x)}{n}
    \leq \frac{\max f - \min f}{n}.
\]
Hence,
\[
    \lim_{n\to +\infty}
        \inf_{x \in N} \inf_{j \geq n}
        \frac{1}{j}(f \circ \psi^{j}(x) - f(x))
    = 0,
\]
and
\[
    \lim_{n\to +\infty}
        \sup_{x \in N} \sup_{l \geq n}
        \frac{1}{l}(f \circ \psi^{l}(x) - f(x))
    = 0.
\]
Therefore,
\[
    \lim_{n\to +\infty} \inf_{x \in N} A_n^-(h)(x)
    =
    \lim_{n\to +\infty} \inf_{x \in N} A_n^-(h + f \circ \psi - f)(x).
\]
The equality
\[
    \lim_{n\to +\infty} \sup_{x \in N} A_n^+(h)(x)
    =
    \lim_{n\to +\infty} \sup_{x \in N} A_n^+(h + f \circ \psi - f)(x)
\]
is proved in a similar way.
\end{proof}

\begin{TT}
\label{4.11}
Let $(N,\xi)$ be a closed contact manifold, $\alpha$ a contact form associated with $\xi$, and $\psi$ a contactomorphism satisfying $\psi^* \alpha = e^h \alpha$, with $h \in C^\infty(N)$.
Then
\[
    \mathcal{A}_{(\alpha,\psi)}
    =
    \ ]-\infty,\ \lim_{n\to +\infty} \inf_{x \in N} A_n^-(h)(x)[
    \ \cup\ 
    ]\lim_{n\to +\infty} \sup_{x \in N} A_n^+(h)(x),\ +\infty[\, .
\]
\end{TT}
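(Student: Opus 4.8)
The plan is to reduce the theorem to the single set identity
\[
    [L^-,L^+] \;=\; \bigcap_{n\ge 1}\mathrm{Im}\big(A_n(h)\big),
\]
where I abbreviate $L^- := \lim_{n}\inf_{x\in N}A_n^-(h)(x)$ and $L^+ := \lim_{n}\sup_{x\in N}A_n^+(h)(x)$, and then to read off both inclusions of the theorem from Lemmas~\ref{rutabaga} and~\ref{3.8}. First I would set $a_n := \min_{N}A_n(h)$ and $b_n := \max_{N}A_n(h)$ (these exist as $N$ is compact) and note that $\mathrm{Im}(A_n(h)) = [a_n,b_n]$ since $N$ is connected and $A_n(h)$ is continuous; interchanging the two suprema, resp.\ infima, in the definitions gives $\inf_{x}A_n^-(h)(x) = \inf_{i\ge n}a_i$ and $\sup_{x}A_n^+(h)(x) = \sup_{i\ge n}b_i$, so $L^- = \liminf_n a_n$ and $L^+ = \limsup_n b_n$. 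Granting the identity, De~Morgan gives $]-\infty,L^-[\,\cup\,]L^+,+\infty[ = \bigcup_{n\ge1}\mathrm{Im}(A_n(h))^{c}$; a point of this union lies outside some $\mathrm{Im}(A_n(h))$, hence belongs to $\mathcal{A}_{(\alpha,\psi)}$ by Lemma~\ref{rutabaga}. Conversely, if $k\in[L^-,L^+]$ then $k\in\mathrm{Im}(A_l(h))$ for every $l\ge1$, which contradicts the conclusion of Lemma~\ref{3.8}, so $k\notin\mathcal{A}_{(\alpha,\psi)}$.

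It then remains to prove the identity. The inclusion $\bigcap_n\mathrm{Im}(A_n(h))\subseteq[L^-,L^+]$ is immediate: $k\in[a_n,b_n]$ for all $n$ forces $k\ge\sup_n a_n\ge L^-$ and $k\le\inf_n b_n\le L^+$. For the reverse inclusion I would invoke invariant measures. Since $N$ is compact metrizable and $\psi$ a homeomorphism, the set $\mathcal{M}_\psi$ of $\psi$-invariant Borel probability measures is nonempty (Krylov--Bogolyubov), convex, and weak-$\ast$ compact, so $I := \{\int_N h\,d\mu : \mu\in\mathcal{M}_\psi\}$ is a nonempty compact interval. For $\mu\in\mathcal{M}_\psi$, invariance gives $\int_N A_n(h)\,d\mu = \int_N h\,d\mu$ for every $n$, so $\int_N h\,d\mu\in\bigcap_n[a_n,b_n]\subseteq[L^-,L^+]$, i.e.\ $I\subseteq[L^-,L^+]$. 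To see that $I$ contains the endpoints, choose $n_j\to+\infty$ with $b_{n_j}\to L^+$ and $x_j\in N$ with $A_{n_j}(h)(x_j) = b_{n_j}$, form the empirical measures $\nu_j := \tfrac{1}{n_j}\sum_{i=0}^{n_j-1}\delta_{\psi^i(x_j)}$ (so $\int_N h\,d\nu_j = A_{n_j}(h)(x_j) = b_{n_j}$), and pass to a weak-$\ast$ limit $\mu^+$; the standard computation $\psi_*\nu_j - \nu_j = \tfrac{1}{n_j}(\delta_{\psi^{n_j}(x_j)} - \delta_{x_j})\to 0$ shows $\mu^+\in\mathcal{M}_\psi$, and $\int_N h\,d\mu^+ = L^+$. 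The symmetric construction from $a_n$ gives $\mu^-$ with $\int_N h\,d\mu^- = L^-$. Being an interval that contains $L^\pm$ and is contained in $[L^-,L^+]$, $I$ equals $[L^-,L^+]$. Finally, given $k\in[L^-,L^+]$, pick $\mu_k\in\mathcal{M}_\psi$ with $\int_N h\,d\mu_k = k$: then for each $n$, $k = \int_N A_n(h)\,d\mu_k$ is a weighted average of the continuous function $A_n(h)$ over the connected manifold $N$, hence lies in $[a_n,b_n] = \mathrm{Im}(A_n(h))$. This proves $[L^-,L^+]\subseteq\bigcap_n\mathrm{Im}(A_n(h))$ and completes the identity.

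I expect the ergodic step---identifying $[L^-,L^+]$ with the set of $h$-averages of $\psi$-invariant measures---to be the only real obstacle. The easy inclusion, and more generally the interior of $[L^-,L^+]$, can be obtained by elementary means (the estimate $|a_{n+1}-a_n|,\ |b_{n+1}-b_n| = O(1/n)$ shows the intervals $[a_n,b_n]$ vary slowly, hence straddle any interior value infinitely often, so that a fixed box $N\times[-C,C]$ already witnesses the failure of proper discontinuity); but at the endpoints $L^\pm$, and in the degenerate case $L^- = L^+$ where $A_n(h)\to L^-$ only uniformly, a fixed compact box in $N\times\mathbb{R}$ need not work when the Birkhoff averages converge more slowly than $1/n$. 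The invariant measure $\mu_k$ circumvents this: it pins $k$ into the range of every $A_n(h)$, so that $\rho_{(\psi,k-h)}^n$ maps the compact set $N\times\{0\}$ into itself for every $n$, which already contradicts proper discontinuity and shows $k\notin\mathcal{A}_{(\alpha,\psi)}$.
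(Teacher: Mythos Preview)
Your proof is correct and takes a genuinely different route from the paper's. You reduce everything to the identity $[L^-,L^+]=\bigcap_{n\ge1}\mathrm{Im}(A_n(h))$ and then invoke Lemmas~\ref{rutabaga} and~\ref{3.8} once each; the nontrivial inclusion $[L^-,L^+]\subseteq\bigcap_n[a_n,b_n]$ you obtain by identifying $[L^-,L^+]$ with the range $I=\{\int h\,d\mu:\mu\in\mathcal{M}_\psi\}$ of $h$-averages over $\psi$-invariant probability measures, via Krylov--Bogolyubov and empirical measures. The paper never forms this intersection and uses no measure theory at all: for the open interval $]L^-,L^+[$ it argues directly, building a compact box $K\subset N\times\mathbb{R}$ and showing by hand (using the monotonicity of $\inf_x A_n^-$, $\sup_x A_n^+$ and a careful $\epsilon$--$\delta$ choice) that some iterate $\rho_{(\psi,k-h)}^{l_n}$ sends a point of $K$ back into $K$ for arbitrarily large $l_n$; for the endpoints $L^\pm$ it does \emph{not} argue directly but instead appeals to the separately proved fact that $\mathcal{A}_{(\alpha,\psi)}$ is open. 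Your ergodic argument handles the endpoints uniformly with the interior and yields, as a byproduct, the pleasant structural statement $\mathcal{A}_{(\alpha,\psi)}^c = I$; the paper's argument is entirely elementary but requires that extra openness lemma. One small phrasing issue: in your last sentence, $\rho_{(\psi,k-h)}^n$ does not map $N\times\{0\}$ \emph{into} itself but rather satisfies $\rho_{(\psi,k-h)}^n(N\times\{0\})\cap(N\times\{0\})\neq\emptyset$; this is of course exactly what Lemma~\ref{3.8} uses.
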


\begin{proof}
Suppose for contradiction that \(k \notin [ \lim_{n\to + \infty} \inf_{x \in N} A^-_n(h)(x), \lim_{n\to + \infty} \sup_{x \in N} A^+_n(h)(x)]\) and that for every $n \in \mathbb{N}_0$, there exists $l_n \geq n$ such that $k \in \text{Im}(A_{l_n}(h))$.
We then construct a sequence $(x_n)_{n \in \mathbb{N}_0} \subset N$ such that $A_{l_n}(h)(x_n) = k$.
For every $n \in \mathbb{N}_0$, we then have
\[\inf_{x\in N} A_{l_n}^-(h)(x) \leq A_{l_n}^-(h)(x_n) \leq k \leq A_{l_n}^+(h)(x_n) \leq \sup_{x \in N} A_{l_n}^+(h)(x).\]
This contradicts the initial assumption. Hence there exists $m \in \mathbb{N}_0$ such that for all $n \geq m$, we have $k \notin \text{Im}(A_n(h))$.
Lemma \ref{rutabaga} then allows us to conclude that $k \in \mathcal{A}_{(\alpha,\psi)}$.
\smallskip

We now show that  
\[ \mathcal{A}_{(\alpha,\psi)} \subseteq \  ]-\infty, \lim_{n\to + \infty} \inf_{x \in N} A^-_n(h)(x)]  \cup  [\lim_{n\to + \infty} \sup_{x \in N} A^+_n(h)(x), + \infty[.\] 
Since $\mathcal{A}_{(\alpha,\psi)}$ is open, the desired equality follows.
\smallskip

If $k \in \mathcal{A}_{(\alpha,\psi)}$, then by Lemma \ref{3.8} there exists $m \in \mathbb{N}_0$ such that $k \notin \text{Im}(A_{m}(h)).$
By Lemma \ref{3.9}, there exists a function $f_m \in C^\infty(N)$ such that $A_m(h) = h+ f_m \circ \psi-f_m$, and in the proof of Proposition \ref{3.10} one constructs a function $\mu \in C^\infty(N \times \mathbb{R})$ satisfying $\rho_{(\psi,k-h)}^* \mu = \mu -k$.
Suppose for contradiction that 
\[\lim_{n\to + \infty} \inf_{x \in N} A^-_n(h)(x) < k < \lim_{n\to + \infty} \sup_{x \in N} A^+_n(h)(x).\]
Set 
\[K := N \times [-\max(\max h, k-\min h),-\min (\min h,k-\max h)].\]
We will show that for every sufficiently large $n \in \mathbb{N}_0$, there exist $l_n \geq n$ and $(y,s) \in K$ such that $\rho_{(\psi,k-h)}^{l_n}(y,s) \in K$.
But since $\rho_{(\psi,k-h)}^* \mu = \mu - k$, this will imply that $\mu(K)$ is unbounded, contradicting the compactness of $N$.
\smallskip

Choose $\delta > 0$ such that 
\[  \lim_{n\to + \infty} \inf_{x \in N} A^-_n(h)(x)< k- \delta < k+\delta < \lim_{n\to + \infty} \sup_{x \in N} A^+_n(h)(x).\]
Since the sequences $\{\inf_{x \in N}A_n^-(h)(x)\}_{n \in \mathbb{N}_0}$ and $\{\sup_{x \in N}A_n^+(h)(x)\}_{n \in \mathbb{N}_0}$ are respectively increasing and decreasing, for every $n \in \mathbb{N}_0$ we have 
\[ \inf_{x \in N} A^-_n(h)(x)< k- \delta < k+\delta < \sup_{x \in N} A^+_n(h)(x).\]
Let $n \in \mathbb{N}_0$ such that 
\[0 < \max \left(-{\min (\min h , k-\max h)},\max(\max h , k-\min h)\right) /{n} < \delta.\]
Set 
$$\epsilon := 1/2(\delta + {\min (\min h , k-\max h)}/{n}) > 0.$$
By definition of the infimum, there exists $x_- \in N$ such that 
$$A_n^-(h)(x_-) < \inf_{x \in N} A_n^-(h)(x) + \epsilon.$$
Similarly, since $A_n^-(h)(x_-) = \inf_{i \geq n} A_i(h)(x_-)$, there exists $n_- \geq n$ such that 
$$A_{n_-}(h)(x_-) < A_n^-(h)(x_-) + \epsilon.$$
Hence 
$$A_{n_-}(h)(x_-) < \inf_{x \in N} A^-_n(h)(x) + \delta + {\min(\min h , k-\max h)}/{n}.$$
Thus 
\[- {\min(\min h , k-\max h)}/{n} < k - A_{n_-}(h)(x_-).\]
This implies that 
\[\rho_{(\psi,k-h)}^{n_-}((x_-,0)) = (\psi^{n_-}(x_-),n_-(k-A_{n_-}(h)(x_-)))\in N \times \ ]-\min (\min h,k-\max h),+\infty[\]
In the same way, one finds $x_+ \in N$ and $n_+ \geq n$ such that 
\[k-A_{n_+}(h)(x_+) < -\max(\max h , k-\min h)/n.\]
This implies that 
\[\rho_{(\psi,k-h)}^{n_+}((x_+,0)) = (\psi^{n_+}(x_+),n_+(k-A_{n_+}(h)(x_+)))\in N \times \ ]-\infty, -\max \{\max h, k-\min h\}[.\]

Assume, without loss of generality, that $n_- \leq n_+$ and consider $\gamma$ a path from $(x_-,0)$ to $(x_+,0)$ in $N \times \{0\} \subset K$.
There are three possibilities for the position of $\rho^{n_+}_{(\psi,k-h)}(x_-,0)$. 
Either $\rho^{n_+}_{(\psi,k-h)}(x_-,0)$ lies to the left of $K$, in which case a point of $\gamma$ must pass through $K$. 
Or $\rho^{n_+}_{(\psi,k-h)}(x_-,0)$ lies inside $K$.
Or $\rho^{n_+}_{(\psi,k-h)}(x_-,0)$ lies to the right of $K$, in which case between the $n_-$-th and $n_+$-th iterate of $\rho_{(\psi,k-h)}$, the point $(x_-,0)$ must pass through $K$. 
We choose $l_n \in \{n_-, \ldots, n_+\}$ such that $\rho_{(\psi,k-h)}^{l_n}(x_-,0) \in K$. 
\end{proof}

\begin{TT}
\label{4.12}
Let $N$ be a closed manifold, $\alpha$ a contact form on $N$, and $\psi$ a contactomorphism such that $\psi^* \alpha = e^h \alpha$, with $h \in C^\infty(N)$.
We have 
\[ \inf_{f \in C^\infty(N)} \max_{x \in N}(h(x)+f\circ \psi(x)-f(x)) = \lim_{n \to \infty} \sup_{x \in N} A_n^+(h)(x) = \lim_{n \to \infty} \max_{x \in N} A_n(h)(x)\]
and 
\[ \sup_{f \in C^\infty(N)} \min_{x \in N}(h(x)+f\circ \psi(x)-f(x)) = \lim_{n \to \infty} \inf_{x \in N} A_n^-(h)(x) = \lim_{n \to \infty} \min_{x \in N} A_n(h)(x).\]
\end{TT}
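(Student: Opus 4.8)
The plan is to establish the first chain of equalities (the one involving $\max$ and $\sup$) directly, and then to obtain the second chain by applying the first to $-h$. Note that none of the three quantities involved actually uses the contact structure: they depend only on the smooth dynamical system $(N,\psi)$ and on $h\in C^\infty(N)$. Throughout I would use compactness of $N$ (so that $\max_{x\in N}$ and $\min_{x\in N}$ of a continuous function are attained), Lemma~\ref{3.9}, which provides $f_n\in C^\infty(N)$ with $A_n(h)=h+f_n\circ\psi-f_n$, and the telescoping identity $A_n(h+f\circ\psi-f)=A_n(h)+\frac{1}{n}(f\circ\psi^{n}-f)$ already used in the proof of Proposition~\ref{4.9}.

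First I would rewrite the middle term. For every $n$, $\sup_{x\in N}A_n^{+}(h)(x)=\sup_{x\in N}\sup_{i\ge n}A_i(h)(x)=\sup_{i\ge n}\sup_{x\in N}A_i(h)(x)=\sup_{i\ge n}\max_{x\in N}A_i(h)(x)$, by the harmless interchange of two suprema. The last expression is non-increasing in $n$, hence its limit exists and equals $\limsup_{n\to+\infty}\max_{x\in N}A_n(h)(x)$. So $\lim_{n}\sup_{x}A_n^{+}(h)(x)=\limsup_{n}\max_{x}A_n(h)(x)$, and it only remains to prove that this $\limsup$ coincides both with $\liminf_{n}\max_{x}A_n(h)(x)$ and with $\inf_{f}\max_{x}(h+f\circ\psi-f)$.

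For the bound $\inf_{f}\max_{x}(h+f\circ\psi-f)\le\inf_{n}\max_{x}A_n(h)(x)$, I would simply take $f=f_n$ from Lemma~\ref{3.9}, so that $\max_{x}(h+f_n\circ\psi-f_n)=\max_{x}A_n(h)(x)$; letting $n$ range then gives $\inf_{f}\max_{x}(h+f\circ\psi-f)\le\liminf_{n}\max_{x}A_n(h)(x)$. For the reverse bound I fix $f\in C^\infty(N)$ and use the telescoping identity $A_n(h)(x)=A_n(h+f\circ\psi-f)(x)-\frac{1}{n}(f\circ\psi^{n}(x)-f(x))$ together with the trivial estimates $A_n(g)\le\max g$ applied to $g=h+f\circ\psi-f$ and $|f\circ\psi^{n}-f|\le\max f-\min f$, obtaining $\max_{x}A_n(h)(x)\le\max_{x}(h+f\circ\psi-f)+\frac{1}{n}(\max f-\min f)$. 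Passing to a subsequence realising the $\limsup$ and letting $n\to+\infty$ gives $\limsup_{n}\max_{x}A_n(h)(x)\le\max_{x}(h+f\circ\psi-f)$, and taking the infimum over $f$ yields $\limsup_{n}\max_{x}A_n(h)(x)\le\inf_{f}\max_{x}(h+f\circ\psi-f)$. Combining the two inequalities closes the loop $\inf_{f}\max_{x}(h+f\circ\psi-f)\le\liminf_{n}\max_{x}A_n(h)(x)\le\limsup_{n}\max_{x}A_n(h)(x)\le\inf_{f}\max_{x}(h+f\circ\psi-f)$, so all of these quantities are equal; in particular $\lim_{n}\max_{x}A_n(h)(x)$ exists, which together with the second paragraph finishes the $\max$-chain.

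For the $\min$-chain I would apply the $\max$-chain to $-h$ (with the same $\psi$). Since $A_n(-h)=-A_n(h)$, $A_n^{+}(-h)=-A_n^{-}(h)$, and $\max_{x}(-h+f\circ\psi-f)=-\min_{x}(h+(-f)\circ\psi-(-f))$, substituting $f\mapsto-f$ in the infimum converts the first chain into $\sup_{f}\min_{x}(h+f\circ\psi-f)=\lim_{n}\inf_{x}A_n^{-}(h)(x)=\lim_{n}\min_{x}A_n(h)(x)$. The whole argument is soft; the only mildly delicate points are the $\sup$/$\inf$ interchanges and the passage to $\limsup$-realising subsequences, and no genuine obstacle arises — indeed, as noted, the contact hypothesis is never used and could be dropped.
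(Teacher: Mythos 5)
Your proposal is correct, and it reaches the conclusion by a genuinely more elementary route than the paper. The easy direction ($\inf_f \max_x(h+f\circ\psi-f) \le \liminf_n \max_x A_n(h)$ via Lemma~\ref{3.9}, plus the identification $\lim_n \sup_x A_n^+(h) = \limsup_n \max_x A_n(h)$ by exchanging suprema) matches the paper's first half. The difference is in the hard direction: the paper proves $\lim_n\sup_x A_n^+(h) \le \inf_f\max_x(h+f\circ\psi-f)$ by choosing a near-optimal $f_\epsilon$, setting $k$ slightly above $\max_x(h+f_\epsilon\circ\psi-f_\epsilon)$, and invoking Lemmas~\ref{castafior} and~\ref{rutabaga} to conclude $k\in\mathcal{A}_{(\alpha,\psi)}$ and then Theorem~\ref{4.11} to conclude $k$ lies above $\lim_n\sup_x A_n^+(h)$ --- i.e., it routes the estimate through the proper-discontinuity characterization of LCS mapping tori, which is the thematic point of the paper. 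You instead get the same inequality directly from the telescoping identity $A_n(h) = A_n(h+f\circ\psi-f) - \tfrac1n(f\circ\psi^n - f)$ and the trivial bound $A_n(g)\le\max g$, which yields $\max_x A_n(h) \le \max_x(h+f\circ\psi-f) + \tfrac1n(\max f - \min f)$ for every $f$. This is shorter, self-contained, and makes explicit that the statement is a purely dynamical fact about $(N,\psi,h)$ in which the contact hypothesis plays no role; your reduction of the $\min$-chain to the $\max$-chain via $h\mapsto -h$ is legitimate precisely because Lemma~\ref{3.9} and your estimates apply to arbitrary $h$, not only to conformal factors. What the paper's longer route buys is the advertised ``interplay'': the dynamical identity is exhibited as a consequence of the geometric characterization of admissible values, whereas your argument proves it independently of all of Section~3.
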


\begin{proof}
For every $n\in \mathbb{N}_0$, by Lemma \ref{3.9} there exists a function $f_n \in C^\infty(N)$ such that $A_n(h) = h+f_n \circ \psi -f_n$.
Thus,
\[ \inf_{f \in C^\infty(N)}\max_{x \in N }(h(x)+f\circ \psi(x)-f(x)) \leq \max_{x \in N} A_n(h)(x) \leq \sup_{x\in N} A_n^+(h)(x)\footnote{Without additional assumptions, $A_n^+(h)$ is not continuous and hence not necessarily bounded.}.\]
Passing to the limit then yields
\[ \inf_{f \in C^\infty(N)}\max_{x \in N }(h(x)+f\circ \psi(x)-f(x)) \leq \lim_{n\to \infty}\max_{x \in N} A_n(h)(x) \leq \lim_{n\to \infty}\sup_{x\in N} A_n^+(h)(x).\]
By definition of the infimum, for every $\epsilon > 0$, there exists $f_\epsilon \in C^\infty(N)$ such that
\[\max_{x \in N}( h(x)+f_\epsilon\circ\psi(x)-f_\epsilon(x)) < \inf_{f \in C^\infty(N)}\max_{x \in N }(h(x)+f\circ \psi(x)-f(x)) +\epsilon/2.\]
Set $k = \epsilon/2 + \max_{x \in N}( h(x)+f_\epsilon\circ\psi(x)-f_\epsilon(x))$, so that $k >  h+f_\epsilon\circ\psi-f_\epsilon$.
By Lemmas \ref{castafior} and \ref{rutabaga}, we have $k\in \mathcal A_{(\alpha, \psi)}$, and by Theorem \ref{4.11}, we have $\lim_{n\to + \infty} \sup_{x \in N} A^+_n(h)(x) < k$.
Hence,
\[ \lim_{n\to + \infty} \sup_{x \in N} A^+_n(h)(x) < \inf_{f \in C^\infty(N)}\max_{x \in N }(h(x)+f\circ \psi(x)-f(x)) +\epsilon.\]
Letting $\epsilon$ tend to zero gives the desired equality.
The second equality is proved in a similar way.
\end{proof}

\newpage

\noindent{\bf{Acknowledgements.}} I am very grateful to Mélanie Bertelson for introducing me to LCS geometry, for her supervision and her help, without which this work would not have been possible.
``Pacôme Van Overschelde is a Research Fellow of the Fonds de la Recherche Scientifique - FNRS”

\printbibliography

\bigskip
\noindent\textsc{Université Libre de Bruxelles (ULB), Département de Géométrie Différentielle}\\
\noindent\textsc{CP218 Campus de la Plaine, 1050 Bruxelles, Belgium}\\
\noindent\textit{E-mail}: pacome.van.overschelde@ulb.be
\end{document}